\DeclareMathOperator\supp{supp}
\newtheorem{remark}{Remark}[section]
\newtheorem{assumption}{Assumption}[section]
\newtheorem{lemma}{Lemma}[section]
\newtheorem{theorem}{Theorem}[section]
\newtheoremstyle{claim}% name
  {}% space above
  {}% space below
  {\it}% body font
  {}% indent amount
  {\rm}% theorem head font
  {.}% punctuation after theorem head
  {.5em}% space after theorem head
  {}% theorem head spec
\theoremstyle{claim}
  \newcommand\figcaption{\def\@captype{figure}\caption}
  \newcommand\tabcaption{\def\@captype{table}\caption}
\begin{document}

\title{A novel coarse space applying to the weighted Schwarz method for Helmholtz equations}
%in inhomogeneous media

\author{Qiya Hu}
\author{Ziyi Li}

 \thanks{1. LSEC, ICMSEC, Academy of Mathematics and Systems Science, Chinese Academy of Sciences, Beijing
 100190, China; 2. School of Mathematical Sciences, University of Chinese Academy of Sciences, Beijing 100049,
 China (hqy@lsec.cc.ac.cn, liziyi@lsec.cc.ac.cn). The work of the author was supported by the National Natural Science Foundation
of China grant G12071469.}

\maketitle

{\bf Abstract.} In this paper we are concerned with restricted additive Schwarz with local impedance transformation conditions
for a family of Helmholtz problems in two dimensions.
These problems are discretized by the finite element method with conforming nodal finite elements. We design and analyze a new adaptive coarse space
for this kind of restricted additive Schwarz method. This coarse space is spanned by some eigenvalue functions of local generalized eigenvalue problems,
which are defined by weighted positive semi-definite bilinear forms on subspaces consisting of local discrete Helmholtz-harmonic functions from impedance boundary data.
We proved that a two-level hybrid Schwarz preconditioner with the proposed coarse space possesses uniformly convergence independent of the mesh size, the subdomain size and the wave
numbers under suitable assumptions. We also introduce an economic coarse space to avoid solving generalized eigenvalue problems. Numerical experiments confirm the theoretical results.

{\bf Key words.} Helmholtz equations, weighted Schwarz method, coarse space, hybrid preconditioner, convergence

{\bf AMS subject classifications}.

65N30, 65N55.

\pagestyle{myheadings}
\thispagestyle{plain}
\markboth{}{Qiya Hu and Ziyi Li}

\section{Introduction}

Let $\Omega$ be a bounded, connected and Lipschitz domain in $\mathbb{R}^2$. Consider the following Helmholtz equation with absorption:
\begin{equation} \label{eq:shiftHelmholtz}
    \left\lbrace
    \begin{aligned}
         & -\Delta u - (\kappa^2 + \mathrm{i}\epsilon) u = f
         & \text{in}\ \Omega,                                               \\
         & \frac{\partial u}{\partial {\bf n}} - \mathrm{i} \kappa u = g
         & \text{on}\ \partial\Omega,
    \end{aligned}
    \right.
\end{equation}
where ${\bf n}$ denotes the unit outward normal on the boundary $\partial\Omega$, $\kappa>0$ is the wave number and $\epsilon > 0$ is a absorption parameter. The pure Helmholtz equation
with $\epsilon=0$ can be approximated or preconditioned by the equation (1.1) with some $\epsilon>0$, see \cite{gander2015applying} for the details.
Then we need only to consider (1.1) with $\epsilon>0$.

The Helmholtz equation is the basic mathematical model in the numerical simulation for sound propagation. A finite element method for the discretization of
(1.1) often has the so-called ``wave number pollution", so the mesh size $h$ must satisfy $h\kappa^{1+\gamma}=O(1)$ for some positive number $\gamma$
so that the approximate solution could achieve a high accuracy when the wave number $\kappa$ increases, see \cite{DuWu2015} and \cite{melenk2001wavenumber}. This means that the mesh size $h$ should be very small and so
the resulting algebraic system is huge when the wave number $\kappa$ is large. Moreover, this algebraic system is highly indefinite and severely ill-conditioned. All these indicate that solving the
linear system is very expensive and difficult.

It is well known that domain decomposition methods are powerful parallel algorithms for solving the systems generated by finite element discretization of elliptic-type
partial differential equations (see  \cite{toselli2005domain} and the references therein). However, the standard domain decomposition methods are inefficient for Helmholtz equations with large
wave numbers unless the size $d$ of the coarse mesh is chosen such that $d\kappa$ is sufficiently small (which is denoted by $d\kappa=O(1)$ for simplicity), see \cite{cai1992domain} and
\cite{graham2017domain}. It is clear that the
restriction on coarse mesh sizes is fatal in applications. The restriction $d\kappa=O(1)$ can be relaxed to some extent for particular discretization methods and particular domain decomposition methods,
see \cite{HuZ2016,HuS2020}.

The restricted additive Schwarz with local impedance conditions for Helmholtz equations was proposed in \cite{kimn2007restricted} and \cite{stcyr2007optimized}.
This method falls into the overlapping domain decomposition method,  but it has two differences from the standard overlapping domain decomposition method:
(i) local problems are defined by impedance boundary conditions instead of Dirichlet boundary conditions;
(ii) extra weight operators associated with partition of unity functions are placed on the left side of the one-level additive preconditioner
, for which this method can be viewed in form (but not spirit) as an extension
of the restricted additive Schwarz method proposed in \cite{cai1999restricted}. For convenience, we abbreviate this method as WASI, which seems more precise than the other abbreviations
(for example, ORAS, WRAS).
In recent years, the WASI method has attracted many attentions from researchers since it shows higher efficiency in solving Helmholtz equations and may essentially weaken the restriction $d\kappa=O(1)$.
Convergence results of the one-level WASI method for two-dimensional Helmholtz problems were established at the continuous level and for discrete problems
in \cite{gong2022convergence} and \cite{gong2023convergence}, respectively. In these works power contractility of the error propagation operators
was built by carefully analyzing the so called ``impedance-to-impedance operators'' for the domain decompositions in strips. A one-level additive Schwarz method with local impedance conditions
but using double-side weight operators for Helmholtz equations with some absorption was considered and analyzed in \cite{graham2020domain}, which established a convergence result for more general domain decomposition (not in strips).

%There are some works on the construction of coarse spaces for WASI method solving Helmholtz equations.
A coarse space generated from plane wave functions was also introduced in \cite{kimn2007restricted}.
The adaptive coarse spaces spanned by some eigenfunctions of local generalized eigenvalue problems are popular. For example, adaptive coarse spaces for overlapping
domain decomposition method solving positive definite problems were constructed in \cite{heinlein2019adaptive}, \cite{lu2022two} and \cite{spillane2014abstract}. The key technique in the construction of
adaptive coarse spaces is the design of local generalized eigenvalue problems, which depend on the considered models and domain decomposition methods.
Two adaptive coarse spaces for WASI method were proposed in \cite{bootland2021comparison} and \cite{conen2014coarse}, in which local generalized eigenvalue problems are defined by Helmholtz sesquilinear forms. But the generalized eigenvalue problems considered in \cite{bootland2021comparison} and \cite{conen2014coarse} are defined on different subspace: the natural restriction spaces are considered in
\cite{bootland2021comparison} and the spaces consisting of local Helmholtz-harmonic functions (with Dirichlet boundary data) are considered in \cite{conen2014coarse}.
%Numerical experiments in \cite{kimn2007restricted}, \cite{bootland2021comparison} and \cite{conen2014coarse} confirmed the efficiency of their coarse spaces.
To our knowledge, there is no work to give rigorous convergence analysis for two-level WASI method with a coarse solver.

In the current paper, we construct and analyze a new adaptive coarse space for the WASI method solving Helmholtz equations. For this coarse space, every local generalized eigenvalue problem
is defined by a weighted positive semi-definite bilinear form posed on the subspace consisting of local Helmholtz-harmonic functions from impedance boundary data.
We prove in theory that a two-level hybrid WASI method with the proposed coarse space possesses uniform convergence independent of the mesh size $h$, the subdomain size $d$ and the wave number $\kappa$, provided that rational assumptions are satisfied. We also introduce an economic version of the proposed coarse space so that the solution of generalized eigenvalue problems can be avoided.
Numerical experiments confirm the efficiency of the proposed coarse spaces.

The paper is organized as follows: In Section 2, we introduce variational problems and some basic auxiliary results.
In Section 3, we define a coarse space and construct a two-level overlapping domain decomposition preconditioner with the resulting coarse solver.
In Section 4, we establish several auxiliary results for the analysis of convergence results.
In Section 5 and Section 6, we give convergence analysis of the preconditioner for a slightly large absorption parameter and a small absorption parameter, respectively.
In Section 7, we compare several related coarse spaces, including the economic coarse space. Finally, we report some numerical results to confirm the efficiency of
the proposed coarse space.

\section{Preliminary}

\subsection{Variational problem and its discretization}
Let $(\cdot,\cdot)$ and $\langle\cdot,\cdot\rangle$ denote the $L^2(\Omega)$ inner product and the $L^2(\partial\Omega)$ inner product, respectively. We use $H^1(\Omega)$ to denote the standard Sobolev space consisting of
weakly derivable functions on $\Omega$. For a complex function $v\in H^1(\Omega)$, let $\bar{v}$ denote its conjugate function. For a given parameter $\epsilon>0$, define the sesquilinear form $a_{\epsilon}(.,.)$ by
\begin{equation*}
    a_{\epsilon}(v,w) \coloneqq (\nabla v,\nabla \bar{w}) - (\kappa^2+\mathrm{i}\epsilon)(v,\bar{w}) -\mathrm{i}\kappa\left\langle v,\bar{w}\right\rangle, \quad v,w\in H^1(\Omega).
\end{equation*}

Assume that $f\in L^2(\Omega)$ and $g\in L^2(\partial\Omega)$. The standard variational problem of \eqref{eq:shiftHelmholtz} is as follows: find $u\in H^1(\Omega)$, such that
\begin{equation}\label{eq:shiftHelmholtzVF}
    a_{\epsilon}(u,v) = F(v), \quad \forall v\in H^1(\Omega),
\end{equation}
where
\begin{equation*}
    F(v) \coloneqq (f,\bar{v})+\int_{\partial\Omega}g\bar{v}ds.
\end{equation*}

Let $\mathcal{T}_h$ be a mesh posed on $\Omega$ with mesh diameter $h$. As usual, we assume that $\mathcal{T}_h$ is quasi-uniform and shape-regular. Let $V_h(\Omega)$ be the nodal finite element space with fixed order $p$
built on $\mathcal{T}_h$. As usual, Galerkin method could be used to find the approximate solution of $u$ in $V_h(\Omega)$.

The discrete variational problem of (\ref{eq:shiftHelmholtzVF}) is: find $u_h\in V_h(\Omega)$, such that
\begin{equation}\label{eq:discreteVF}
    a_{\epsilon}(u_h,v_h) = F(v_h),\quad \forall v_h\in V_h(\Omega).
\end{equation}

It is well known that the convergence of $u_h$ has the so called ``wave number pollution", which means that the mesh size $h$ should satisfy $h\ll\kappa^{-1}$, see \cite{melenk2001wavenumber}.
Because of this, we give an assumption on $h$.
\begin{assumption}\label{assump:meshsize} The mesh size $h$ satisfies  $h\sim \kappa^{-(1+\gamma)}$ for some $0<\gamma\leq 1$.

\end{assumption}
In general the value of $\gamma$ depends on the order $p$: the larger $p$ is, the smaller $\gamma$ is, see  \cite{DuWu2015}.

For convenience, define the discrete operator $A_\epsilon$ as
\begin{equation*}
    (A_\epsilon v_h, w_h) = a_{\epsilon}(u_h,v_h),\quad\forall v_h,w_h\in V_h(\Omega).
\end{equation*}
Then the equation (\ref{eq:discreteVF}) can be written as the operator form
\begin{equation}
    A_\epsilon u_h=f_h,\quad u_h, f_h\in V_h(\Omega).\label{new2.1}
\end{equation}

\subsection{Notations}

We introduce the standard $\kappa$-weighted inner product and norm on $H^1(\Omega)$:
\begin{equation*}
    (v,w)_{1,\kappa} = (\nabla v,\nabla \bar{w}) + \kappa^2(v,\bar{w})\quad\text{and}\quad \Vert v\Vert_{1,\kappa} = (v,v)_{1,\kappa}^{\frac{1}{2}}.
\end{equation*}
Let $\Vert\cdot\Vert_{0,\Omega}$ and $\Vert\cdot\Vert_{0,\partial\Omega}$ denote the norms on $L^2(\Omega)$ and $L^2(\partial\Omega)$ respectively.

For a subdomain $G$ of $\Omega$, where $G$ is just the union of some elements in ${\mathcal T}_h$, let $V_h(G)$ denote the finite element space restricted on $G$:
$$ V_h(G):=\{v_h|_{G}: v_h\in V_h(\Omega)\}. $$
The trace space of $V_h(G)$ is denoted by $V_h(\partial G)$:
$$ V_h(\partial G):=\{v_h|_{\partial G}: v_h\in V_h(G)\}. $$
Let $V^0_{h}(G) \subseteq V_{h}(G)$ be the space consisting of functions vanishing on the inner boundary $\partial G\setminus\partial\Omega$, namely,
$$ V^0_{h}(\Omega_l)\coloneqq\{v_h\in V_h(G):~v_h=0\text{ on }\partial G\setminus\partial\Omega\}. $$
Define a local inner product on $G$ as
$$ (v, w)_{1,\kappa,G}=\int_G\nabla v\cdot\nabla \bar{w} dx+\kappa^2\int_{G}v\bar{w}dx,\quad v,w\in H^1(G). $$
Let $\Vert\cdot\Vert_{1,\kappa,G}$ denotes the norm induced from the inner product $(\cdot, \cdot)_{1,\kappa,G}$.

Throughout this paper, we will use the notation $a\lesssim b$ to represent that there exists a $C>0$ such that $a\leq C b$, and the notation $a\sim b$ to represent that $a\lesssim b$ and $b\lesssim a$. For a domain $D\in\mathbb{R}^n$, by saying $D$ has characteristic length scale $L$ means that its diameter $\sim L$, the surfaces area $\sim L^{n-1}$ and its volume $\sim L^{n}$.

Let $\mathcal{N}(G)$ be the set of all finite element nodes on $\overline{G}$. For any $v_h\in V_h(G)$, the discrete $L^2$ norm of $v_h$ is defined as
\begin{equation}\label{eq:discreteL2Norm}
    \Vert v_h\Vert_{0,h,G}^2 = h^2\sum_{x\in \mathcal{N}(G)} |v_h(x)|^2.
\end{equation}
There holds
\begin{equation}\label{eq:dL2NormSim}
    \Vert v_h\Vert_{0,h,G} \sim \Vert v_h\Vert_{0,G}.
\end{equation}

\subsection{Some basic auxiliary results}

The following multiplicative trace inequality provides a $L$-explicit estimate of traces:
\begin{lemma}{\rm(Multiplicative trace inequality)}
    Let $G$ be a Lipschitz domain with characteristic length scale $L$, then for every $v\in H^1(G)$ there holds
    \begin{equation}\label{eq:multiTrace}
        \Vert v\Vert^2_{0,\partial G} \lesssim L^{-1}\Vert v\Vert_{0,G}^2 + \Vert \nabla v\Vert_{0,G}\Vert u\Vert_{0,G}
    \end{equation}
\end{lemma}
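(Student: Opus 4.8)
The plan is to prove the inequality by the divergence-theorem (Rellich-type) argument, which keeps the dependence on $L$ explicit and does not require mapping $G$ onto a fixed reference configuration. Since $G$ is Lipschitz, one can construct a vector field $\boldsymbol{\xi}\in C^{0,1}(\overline{G};\mathbb{R}^n)$ together with a constant $c>0$, depending only on the Lipschitz character (shape-regularity) of $G$, such that $\boldsymbol{\xi}\cdot{\bf n}\ge c$ a.e.\ on $\partial G$, $\Vert\boldsymbol{\xi}\Vert_{L^\infty(G)}\lesssim 1$ and $\Vert\nabla\cdot\boldsymbol{\xi}\Vert_{L^\infty(G)}\lesssim L^{-1}$. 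Concretely, one covers $\partial G$ by finitely many coordinate patches in which the boundary is a Lipschitz graph, takes in each patch the constant inward-pointing coordinate direction, glues these directions with a partition of unity subordinate to the cover, and rescales so that the cut-off functions have gradients of size $L^{-1}$; the number of patches and all the constants are controlled by shape-regularity.

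Next I would apply the divergence theorem to the vector field $|v|^2\boldsymbol{\xi}$. First assume $v\in C^1(\overline{G})$; the general case $v\in H^1(G)$ follows by density. This gives
\[
  \int_{\partial G}|v|^2\,\boldsymbol{\xi}\cdot{\bf n}\,ds
  =\int_G|v|^2\,\nabla\cdot\boldsymbol{\xi}\,dx+\int_G\boldsymbol{\xi}\cdot\nabla(|v|^2)\,dx,
\]
and since $\nabla(|v|^2)=2\,\mathrm{Re}(\bar v\,\nabla v)$ and $\boldsymbol{\xi}\cdot{\bf n}\ge c$ on $\partial G$, the Cauchy--Schwarz inequality yields
\[
  c\,\Vert v\Vert_{0,\partial G}^2
  \le \Vert\nabla\cdot\boldsymbol{\xi}\Vert_{L^\infty(G)}\,\Vert v\Vert_{0,G}^2
  +2\,\Vert\boldsymbol{\xi}\Vert_{L^\infty(G)}\,\Vert v\Vert_{0,G}\,\Vert\nabla v\Vert_{0,G}.
\]
Inserting the bounds $\Vert\boldsymbol{\xi}\Vert_{L^\infty(G)}\lesssim 1$ and $\Vert\nabla\cdot\boldsymbol{\xi}\Vert_{L^\infty(G)}\lesssim L^{-1}$ gives exactly $\Vert v\Vert_{0,\partial G}^2\lesssim L^{-1}\Vert v\Vert_{0,G}^2+\Vert\nabla v\Vert_{0,G}\Vert v\Vert_{0,G}$, which is the claimed estimate.

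An alternative, shorter route is the scaling argument: set $\hat x=x/L$, $\hat G=G/L$, $\hat v(\hat x)=v(L\hat x)$, apply on $\hat G$ (which has unit length scale) the classical multiplicative trace inequality $\Vert\hat v\Vert_{0,\partial\hat G}^2\lesssim\Vert\hat v\Vert_{0,\hat G}^2+\Vert\hat\nabla\hat v\Vert_{0,\hat G}\Vert\hat v\Vert_{0,\hat G}$, and change variables back, noting that boundary integrals pick up a factor $L^{n-1}$, volume integrals a factor $L^{n}$, and $\hat\nabla\hat v=L\,\nabla v$; the powers of $L$ combine to the stated inequality. This route implicitly requires that $G$ be, up to the dilation by $L$, a member of a fixed compact family of Lipschitz domains so that the unit-scale constant is uniform.

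The main obstacle is the construction of the vector field $\boldsymbol{\xi}$ with the correct, $L$-explicit bounds $\Vert\boldsymbol{\xi}\Vert_{L^\infty(G)}\lesssim 1$, $\Vert\nabla\cdot\boldsymbol{\xi}\Vert_{L^\infty(G)}\lesssim L^{-1}$ and $\boldsymbol{\xi}\cdot{\bf n}\ge c>0$ holding uniformly; this is precisely where the Lipschitz regularity (local graph representations, finite cover, partition of unity) and the length scale $L$ enter. Everything else — the divergence theorem, the product rule for $|v|^2$, Cauchy--Schwarz, and the density argument — is routine.
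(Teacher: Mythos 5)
Your proof is correct. The paper does not actually prove this lemma -- it is stated as a known result (it is the standard multiplicative trace inequality, used e.g.\ in the Helmholtz domain-decomposition literature of Graham, Spence et al.), so there is no in-paper argument to compare against; your divergence-theorem (Rellich-type) argument with a vector field $\boldsymbol{\xi}$ satisfying $\boldsymbol{\xi}\cdot{\bf n}\gtrsim 1$, $\Vert\boldsymbol{\xi}\Vert_{L^\infty}\lesssim 1$, $\Vert\nabla\cdot\boldsymbol{\xi}\Vert_{L^\infty}\lesssim L^{-1}$ is exactly the standard proof, and the scaling of the constants is right (for a domain star-shaped with respect to a ball of radius $\sim L$ one can simply take $\boldsymbol{\xi}(x)=(x-x_0)/L$). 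The only point that genuinely needs care -- and which you correctly flag -- is that the hidden constants depend on the Lipschitz character of $G$, so in the paper's application (where $G$ ranges over all subdomains $\Omega_l$ and the strips $\widetilde{\Omega}_l$) one must implicitly assume uniform shape-regularity of the decomposition; you also correctly read the $\Vert u\Vert_{0,G}$ in the stated inequality as a typo for $\Vert v\Vert_{0,G}$.
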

Thanks to the absorption parameter $\epsilon$, sesquilinear form $a_\epsilon(\cdot,\cdot)$ is coercive:
\begin{lemma}{\rm(Continuity and coercivity,\cite[Lemma 3.1]{gander2015applying})}\label{lemma:contCoer}
    \begin{itemize}
        \item[(1)] Let $\Omega$ have characteristic length scale $L$ and $0\leq\epsilon\leq\kappa^2$. Then the sesquilinear form $a_\epsilon(\cdot,\cdot)$ is continuous:
              \begin{equation}\label{eq:continuous}
                  \vert a_\epsilon(v,w)\vert \lesssim (1+(\kappa L)^{-1}) \Vert v\Vert_{1,\kappa} \Vert w\Vert_{1,\kappa},\quad \forall v,w\in H^1(\Omega).
              \end{equation}
        \item[(2)] $a_{\epsilon}(\cdot,\cdot)$ is also coercive:
              \begin{equation}\label{eq:coercive}
                  \vert a_\epsilon(v,v)\vert \gtrsim \frac{\epsilon}{\kappa^2} \Vert v\Vert_{1,\kappa}^2,\quad \forall v\in H^1(\Omega)
              \end{equation}
    \end{itemize}
\end{lemma}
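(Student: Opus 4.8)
The statement is quoted from \cite[Lemma 3.1]{gander2015applying}; the argument is short enough to sketch. The plan is to estimate $a_\epsilon(v,w)$ term by term directly from its definition, treating the two volume integrals by the Cauchy--Schwarz inequality and the boundary integral by the multiplicative trace inequality \eqref{eq:multiTrace} with $G=\Omega$.

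For continuity, write $a_\epsilon(v,w)$ as the sum of the gradient term, the mass term, and the boundary term. The gradient term is bounded by $\|\nabla v\|_{0,\Omega}\|\nabla w\|_{0,\Omega}\le\|v\|_{1,\kappa}\|w\|_{1,\kappa}$, and since $|\kappa^2+\mathrm i\epsilon|\le\sqrt2\,\kappa^2$ under the hypothesis $0\le\epsilon\le\kappa^2$, the mass term is bounded by $\sqrt2\,\kappa^2\|v\|_{0,\Omega}\|w\|_{0,\Omega}\le\sqrt2\,\|v\|_{1,\kappa}\|w\|_{1,\kappa}$; both are $\lesssim(1+(\kappa L)^{-1})\|v\|_{1,\kappa}\|w\|_{1,\kappa}$. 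The boundary term $\kappa\,|\langle v,\bar w\rangle|\le\kappa\|v\|_{0,\partial\Omega}\|w\|_{0,\partial\Omega}$ is where the factor $1+(\kappa L)^{-1}$ actually enters: applying \eqref{eq:multiTrace} and then using $\|v\|_{0,\Omega}^2\le\kappa^{-2}\|v\|_{1,\kappa}^2$ and $\|\nabla v\|_{0,\Omega}\|v\|_{0,\Omega}\le\kappa^{-1}\|v\|_{1,\kappa}^2$ gives $\|v\|_{0,\partial\Omega}^2\lesssim\kappa^{-1}\bigl(1+(\kappa L)^{-1}\bigr)\|v\|_{1,\kappa}^2$, and likewise for $w$; multiplying these two bounds together with the prefactor $\kappa$ reproduces $(1+(\kappa L)^{-1})\|v\|_{1,\kappa}\|w\|_{1,\kappa}$. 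Summing the three contributions yields \eqref{eq:continuous}.

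For coercivity I would set $w=v$ and split into real and imaginary parts: $\operatorname{Re}a_\epsilon(v,v)=\|\nabla v\|_{0,\Omega}^2-\kappa^2\|v\|_{0,\Omega}^2$ and $\operatorname{Im}a_\epsilon(v,v)=-\epsilon\|v\|_{0,\Omega}^2-\kappa\|v\|_{0,\partial\Omega}^2$, so that $|a_\epsilon(v,v)|\ge\epsilon\|v\|_{0,\Omega}^2$ and $|a_\epsilon(v,v)|\ge\bigl|\,\|\nabla v\|_{0,\Omega}^2-\kappa^2\|v\|_{0,\Omega}^2\,\bigr|$. A short case distinction then gives \eqref{eq:coercive}: if $\|\nabla v\|_{0,\Omega}^2\le2\kappa^2\|v\|_{0,\Omega}^2$ then $\tfrac{\epsilon}{\kappa^2}\|v\|_{1,\kappa}^2=\tfrac{\epsilon}{\kappa^2}\|\nabla v\|_{0,\Omega}^2+\epsilon\|v\|_{0,\Omega}^2\le3\epsilon\|v\|_{0,\Omega}^2\le3|a_\epsilon(v,v)|$; if instead $\|\nabla v\|_{0,\Omega}^2>2\kappa^2\|v\|_{0,\Omega}^2$ then $\bigl|\,\|\nabla v\|_{0,\Omega}^2-\kappa^2\|v\|_{0,\Omega}^2\,\bigr|\ge\tfrac12\|\nabla v\|_{0,\Omega}^2$ and $\|v\|_{1,\kappa}^2\le\tfrac32\|\nabla v\|_{0,\Omega}^2$, hence (using $\epsilon\le\kappa^2$) $\tfrac{\epsilon}{\kappa^2}\|v\|_{1,\kappa}^2\le\tfrac32\|\nabla v\|_{0,\Omega}^2\le3|a_\epsilon(v,v)|$. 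Either way $|a_\epsilon(v,v)|\gtrsim\tfrac{\epsilon}{\kappa^2}\|v\|_{1,\kappa}^2$.

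The only mildly delicate point is the boundary term in the continuity estimate: one must keep the two pieces $L^{-1}\|v\|_{0,\Omega}^2$ and $\|\nabla v\|_{0,\Omega}\|v\|_{0,\Omega}$ of \eqref{eq:multiTrace} separate and weight each against $\|\cdot\|_{1,\kappa}$ with the correct power of $\kappa$; a crude trace bound would lose the sharp dependence $1+(\kappa L)^{-1}$. Everything else is Cauchy--Schwarz together with elementary real-variable inequalities.
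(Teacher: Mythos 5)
Your proof is correct. The paper does not prove this lemma at all — it is quoted verbatim from \cite[Lemma 3.1]{gander2015applying} — and your argument is precisely the standard derivation of that result: Cauchy--Schwarz on the volume terms, the multiplicative trace inequality with the two pieces weighted by $\kappa^{-2}L^{-1}$ and $\kappa^{-1}$ respectively to produce the sharp factor $1+(\kappa L)^{-1}$, and for coercivity the observation that $|a_\epsilon(v,v)|$ dominates both $\epsilon\Vert v\Vert_{0,\Omega}^2$ (from the imaginary part, whose two contributions have the same sign) and $\bigl|\Vert\nabla v\Vert_{0,\Omega}^2-\kappa^2\Vert v\Vert_{0,\Omega}^2\bigr|$ (from the real part), combined through the case distinction on whether $\Vert\nabla v\Vert_{0,\Omega}^2\le 2\kappa^2\Vert v\Vert_{0,\Omega}^2$. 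No gaps.
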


\begin{lemma}\label{lemma:stability}
    Let $\Omega$ have characteristic length scale $L$ and be star-shaped with respect to a disk of radius $\sim L$. Let $u$ be the solution to \eqref{eq:shiftHelmholtz} with $f\in L^2(\Omega)$ and $g=0$. Then, $u$ satisfies
    \begin{equation}\label{eq:stability}
        \Vert u\Vert_{1,\kappa}\lesssim L\Vert f\Vert_{\Omega}
    \end{equation}
\end{lemma}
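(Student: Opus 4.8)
The plan is to establish this by the classical Rellich--Morawetz multiplier technique, combined with the basic energy identity obtained by testing with $u$ itself. Star-shapedness with respect to a disk of radius $\sim L$ is the ingredient that makes the radial vector field $r(x):=x-x_0$ (with $x_0$ the centre of the disk) satisfy $0<\rho\lesssim r\cdot{\bf n}\lesssim L$ on $\partial\Omega$, with $\rho\sim L$; this lower bound, not merely $r\cdot{\bf n}\ge 0$, is what the argument needs.

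First I would record the energy identity: test the weak form of \eqref{eq:shiftHelmholtz} (with $g=0$) against $u$. Taking imaginary parts gives $\kappa\Vert u\Vert_{0,\partial\Omega}^2+\epsilon\Vert u\Vert_{0,\Omega}^2=-\mathrm{Im}(f,\bar u)\le \Vert f\Vert_{0,\Omega}\Vert u\Vert_{0,\Omega}$, hence both $\kappa\Vert u\Vert_{0,\partial\Omega}^2\le\Vert f\Vert_{0,\Omega}\Vert u\Vert_{0,\Omega}$ and $\epsilon\Vert u\Vert_{0,\Omega}\le\Vert f\Vert_{0,\Omega}$; taking real parts gives $\Vert\nabla u\Vert_{0,\Omega}^2-\kappa^2\Vert u\Vert_{0,\Omega}^2=\mathrm{Re}(f,\bar u)\le\Vert f\Vert_{0,\Omega}\Vert u\Vert_{0,\Omega}$. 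Next I would test against $r\cdot\nabla u$ (after a density/regularisation step so the integrations by parts are legitimate; on a star-shaped Lipschitz domain with $f\in L^2$, $g=0$ the solution is regular enough). The Rellich identity in $\mathbb{R}^2$ reads
\begin{equation*}
2\mathrm{Re}\int_\Omega (r\cdot\nabla\bar u)\,\Delta u\,dx=\int_{\partial\Omega}\Bigl[2\mathrm{Re}\bigl((r\cdot\nabla\bar u)\,\partial_{\bf n}u\bigr)-(r\cdot{\bf n})|\nabla u|^2\Bigr]ds.
\end{equation*}
Substituting $\Delta u=-\kappa^2u-\mathrm{i}\epsilon u-f$ and using $\int_\Omega r\cdot\nabla|u|^2=\int_{\partial\Omega}(r\cdot{\bf n})|u|^2-2\Vert u\Vert_{0,\Omega}^2$ isolates $2\kappa^2\Vert u\Vert_{0,\Omega}^2$ on the left; the volume contributions from the $\mathrm{i}\epsilon u$ and $f$ terms are controlled by $\epsilon L\Vert\nabla u\Vert_{0,\Omega}\Vert u\Vert_{0,\Omega}$ and $L\Vert f\Vert_{0,\Omega}\Vert\nabla u\Vert_{0,\Omega}$ (using $|r|\lesssim L$), and the first of these is $\lesssim L\Vert f\Vert_{0,\Omega}\Vert\nabla u\Vert_{0,\Omega}$ by the bound $\epsilon\Vert u\Vert_{0,\Omega}\le\Vert f\Vert_{0,\Omega}$ just obtained.

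Then I would treat the boundary integral, which is the heart of the matter. Decomposing $\nabla u=(\partial_{\bf n}u){\bf n}+\nabla_T u$ and $r=(r\cdot{\bf n}){\bf n}+r_T$ on $\partial\Omega$ and inserting the impedance condition $\partial_{\bf n}u=\mathrm{i}\kappa u$ (so $|\partial_{\bf n}u|^2=\kappa^2|u|^2$), the boundary integrand becomes, up to harmless constants, $\kappa^2(r\cdot{\bf n})|u|^2-(r\cdot{\bf n})|\nabla_T u|^2+2\kappa\,\mathrm{Re}\bigl(\mathrm{i}\,u\,(r_T\cdot\nabla_T\bar u)\bigr)$. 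The term $-(r\cdot{\bf n})|\nabla_T u|^2$ has the favourable sign, and the crucial observation is that a weighted Young inequality absorbs the cross term into it:
\begin{equation*}
2\kappa|u|\,|r_T|\,|\nabla_T u|\le \tfrac12(r\cdot{\bf n})|\nabla_T u|^2+\frac{2\kappa^2|r_T|^2}{r\cdot{\bf n}}|u|^2\lesssim \tfrac12(r\cdot{\bf n})|\nabla_T u|^2+\kappa^2L|u|^2,
\end{equation*}
where $|r_T|^2\lesssim L^2$ and $r\cdot{\bf n}\gtrsim L$ were used. Thus the tangential derivative disappears entirely, and the surviving boundary contributions are $\lesssim\kappa^2L\Vert u\Vert_{0,\partial\Omega}^2\le\kappa L\Vert f\Vert_{0,\Omega}\Vert u\Vert_{0,\Omega}$ by the energy identity, which by Young's inequality is $\le\frac18\kappa^2\Vert u\Vert_{0,\Omega}^2+CL^2\Vert f\Vert_{0,\Omega}^2$. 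Collecting everything yields $\kappa^2\Vert u\Vert_{0,\Omega}^2\lesssim L^2\Vert f\Vert_{0,\Omega}^2+L\Vert f\Vert_{0,\Omega}\Vert\nabla u\Vert_{0,\Omega}$; inserting $\Vert\nabla u\Vert_{0,\Omega}^2\le\kappa^2\Vert u\Vert_{0,\Omega}^2+\Vert f\Vert_{0,\Omega}\Vert u\Vert_{0,\Omega}$ from the real part of the energy identity and applying Young's inequality once more (together with the mild normalisation $\kappa L\gtrsim1$ valid in our setting) lets all $\kappa\Vert u\Vert_{0,\Omega}$ and $\Vert\nabla u\Vert_{0,\Omega}$ terms be absorbed, giving first $\kappa\Vert u\Vert_{0,\Omega}\lesssim L\Vert f\Vert_{0,\Omega}$ and then $\Vert\nabla u\Vert_{0,\Omega}\lesssim L\Vert f\Vert_{0,\Omega}$, i.e. \eqref{eq:stability}.

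The main obstacle is exactly the boundary term containing $\nabla_T u$: the trace $u|_{\partial\Omega}$ is only in $H^{1/2}(\partial\Omega)$, so $\Vert\nabla_T u\Vert_{0,\partial\Omega}$ is not available a priori, and without the star-shaped-with-respect-to-a-disk hypothesis (which upgrades $r\cdot{\bf n}\ge0$ to $r\cdot{\bf n}\gtrsim L$) the cross term could not be absorbed. A secondary technical point is justifying the Rellich identity on a merely Lipschitz domain, which requires approximating $\Omega$ by smooth star-shaped domains or regularising $u$, and careful bookkeeping of the $L$- and $\kappa$-dependence throughout.
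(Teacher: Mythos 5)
Your argument is essentially correct, but it takes a different route from the paper: the paper does not prove the estimate from scratch at all, it simply combines two cited results --- \cite[Theorem 2.7]{graham2020domain} (which covers the regime $\epsilon/\kappa\leq C_2/L$) and \cite[Corollary 2.8]{gander2015applying} (whose bound $\Vert u\Vert_{1,\kappa}\lesssim(\kappa/\epsilon)\Vert f\Vert$ is invoked when $\epsilon/\kappa>C_2/L$, where it reduces to $\lesssim L\Vert f\Vert/C_2$). What you have written is, in effect, a self-contained reproof of the first of these citations by the Rellich--Morawetz multiplier method, and your computations check out: the energy identity, the $2$D Rellich identity (correctly with no volume $|\nabla u|^2$ term since $n-2=0$), the substitution of the impedance condition on the boundary, and the absorption of the cross term into $-(r\cdot{\bf n})|\nabla_Tu|^2$ using $r\cdot{\bf n}\gtrsim L$ are all sound. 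A notable advantage of your version is that the bound $\epsilon\Vert u\Vert_{0,\Omega}\leq\Vert f\Vert_{0,\Omega}$ lets you treat all $\epsilon>0$ uniformly, so you avoid the paper's case split on the size of $\epsilon/\kappa$. Two caveats, both of which you flag yourself but which should be made explicit if this were written out in full: (i) your final absorption step uses the normalisation $\kappa L\gtrsim 1$, which is not among the hypotheses of the lemma as stated (the complementary low-frequency regime $\kappa L\lesssim 1$ needs a separate, easier argument, or one falls back on the cited references which cover it); and (ii) the justification of the Rellich identity on a merely Lipschitz star-shaped domain genuinely requires the approximation/regularisation argument you allude to, and is the main technical content of the cited \cite[Theorem 2.7]{graham2020domain}.
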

\begin{proof}
    This result is a natural corollary of \cite[Theorem 2.7]{graham2020domain} and \cite[Corollary 2.8]{gander2015applying}. By \cite[Theorem 2.7]{graham2020domain}, \eqref{eq:stability} holds when$\frac{\epsilon}{\kappa}\leq \frac{C_2}{L}$ where $C_2$ is the constant defined therein. And when $\frac{\epsilon}{\kappa}> \frac{C_2}{L}$, inserting it into \cite[Corollary 2.8]{gander2015applying}, we obtain
    \begin{equation*}
        \Vert u\Vert_{1,\kappa}\lesssim \frac{L}{C_2}\Vert f\Vert_{\Omega}.
    \end{equation*}
\end{proof}

The well-posedness of the variational problem (\ref{eq:discreteVF}) is guaranteed by the following lemma. Combining \eqref{eq:coercive} and \eqref{eq:stability}, the sesquilinear form $a_{\epsilon}(\cdot,\cdot)$ satisfies the discrete inf-sup condition:
\begin{lemma}{\rm (\cite[Theorem 2.8(ii)]{graham2020domain})}\label{lemma:infsup}
    Let the assumption made in Lemma \ref{lemma:stability} be satisfied. Then there exists a mesh threshold function $\overline{h}(\kappa,p)$ such that when mesh size $h < \overline{h}(\kappa,p)$, we have the estimate
   \begin{equation}\label{eq:inf-sup}
    \Vert v_h\Vert_{1,\kappa}  \lesssim  \min\left\lbrace 1+\kappa L,\frac{\kappa^2}{\epsilon}\right\rbrace\sup_{0\neq w_h\in V_h(\Omega)}\frac{\vert a_\epsilon(v_h,w_h)\vert}{\Vert w_h\Vert_{1,\kappa}},\quad\forall v_h\in V_h(\Omega).
    \end{equation}
 If the star-shaped assumption in Lemma \ref{lemma:stability} is not satisfied, then we only have
    \begin{equation}\label{eq:inf-sup1}
      \Vert v_h\Vert_{1,\kappa}  \lesssim  \frac{\kappa^2}{\epsilon}\sup_{0\neq w_h\in V_h(\Omega)}\frac{\vert a_\epsilon(v_h,w_h)\vert}{\Vert w_h\Vert_{1,\kappa}},\quad\forall v_h\in V_h(\Omega).
    \end{equation}
 %  Moreover, the solution $u_h$ to \eqref{eq:discreteVF} with a general $F\in (V_h(\Omega))'$ satisfies
 %   \begin{equation}\label{eq:discreteStability}
 %       \Vert u_h\Vert_{1,k} \lesssim \min\left\lbrace 1+\kappa L,\frac{\kappa^2}{\epsilon}\right\rbrace \sup_{v_h\in V_h(\Omega)}\left(\frac{\vert F(v_h)\vert}{\Vert v_h\Vert_{1,\kappa}}\right).
 %   \end{equation}
\end{lemma}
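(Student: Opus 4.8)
The plan is to prove the two bounds separately, because they rest on different facts. For $v_h\in V_h(\Omega)$ set $S(v_h)\coloneqq\sup_{0\neq w_h\in V_h(\Omega)}\frac{|a_\epsilon(v_h,w_h)|}{\Vert w_h\Vert_{1,\kappa}}$. The estimate with factor $\kappa^2/\epsilon$ — which is the whole of \eqref{eq:inf-sup1} and also supplies the second argument of the minimum in \eqref{eq:inf-sup} — comes straight out of coercivity: testing the supremum with $w_h=v_h$ and invoking \eqref{eq:coercive} gives
\begin{equation*}
  \frac{\epsilon}{\kappa^2}\,\Vert v_h\Vert_{1,\kappa}^2\ \lesssim\ |a_\epsilon(v_h,v_h)|\ \le\ S(v_h)\,\Vert v_h\Vert_{1,\kappa},
\end{equation*}
and dividing by $\Vert v_h\Vert_{1,\kappa}$. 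This argument uses neither the star-shaped hypothesis nor any restriction on $h$, so it holds in both situations.

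For the estimate with factor $1+\kappa L$ I would run a Schatz-type argument (a G\r{a}rding identity together with a duality estimate), and this is where the star-shapedness is needed, via Lemma \ref{lemma:stability}. Since $\mathrm{Re}\,a_\epsilon(v_h,v_h)=\Vert\nabla v_h\Vert_{0,\Omega}^2-\kappa^2\Vert v_h\Vert_{0,\Omega}^2$, one has
\begin{equation*}
  \Vert v_h\Vert_{1,\kappa}^2=\mathrm{Re}\,a_\epsilon(v_h,v_h)+2\kappa^2\Vert v_h\Vert_{0,\Omega}^2\ \le\ S(v_h)\,\Vert v_h\Vert_{1,\kappa}+2\kappa^2\Vert v_h\Vert_{0,\Omega}^2,
\end{equation*}
so it remains only to control $\kappa^2\Vert v_h\Vert_{0,\Omega}^2$. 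I would do this by duality: let $\xi\in H^1(\Omega)$ solve $a_\epsilon(w,\xi)=(w,v_h)$ for all $w\in H^1(\Omega)$, so that $\Vert v_h\Vert_{0,\Omega}^2=a_\epsilon(v_h,\xi)$. The function $\xi$ solves (after passing to its conjugate) an impedance Helmholtz problem with vanishing boundary data and volume data of $L^2$-norm $\Vert v_h\Vert_{0,\Omega}$, so Lemma \ref{lemma:stability} gives $\Vert\xi\Vert_{1,\kappa}\lesssim L\Vert v_h\Vert_{0,\Omega}$; star-shapedness also grants $\xi$ enough additional regularity that it can be approximated by some $\xi_h\in V_h(\Omega)$ with $\Vert\xi_h\Vert_{1,\kappa}\lesssim\Vert\xi\Vert_{1,\kappa}$ and with $\Vert\xi-\xi_h\Vert_{1,\kappa}$ tending to $0$ as $h\to 0$ at a rate that depends on $\kappa$ and $L$.

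Then the remaining calculation is routine. Splitting $\Vert v_h\Vert_{0,\Omega}^2=a_\epsilon(v_h,\xi-\xi_h)+a_\epsilon(v_h,\xi_h)$, the second term is $\le S(v_h)\Vert\xi_h\Vert_{1,\kappa}\lesssim L\,S(v_h)\Vert v_h\Vert_{0,\Omega}$, and the first is $\lesssim(1+(\kappa L)^{-1})\Vert v_h\Vert_{1,\kappa}\Vert\xi-\xi_h\Vert_{1,\kappa}$ by continuity \eqref{eq:continuous}. Multiplying through by $\kappa^2$ and using $\kappa\Vert v_h\Vert_{0,\Omega}\le\Vert v_h\Vert_{1,\kappa}$, the $\xi-\xi_h$ contribution takes the form $c(h)\Vert v_h\Vert_{1,\kappa}^2$ with $c(h)\to 0$ as $h\to 0$; I would then fix the mesh threshold $\overline{h}(\kappa,p)$ — this is precisely the point at which the approximation constants of $V_h$, hence the dependence on the polynomial degree $p$, enter — so that $c(h)\le\tfrac{1}{4}$ for $h<\overline{h}(\kappa,p)$, absorb that term, and be left with $\kappa^2\Vert v_h\Vert_{0,\Omega}^2\lesssim\tfrac{1}{4}\Vert v_h\Vert_{1,\kappa}^2+L\kappa\,S(v_h)\Vert v_h\Vert_{1,\kappa}$. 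Putting this back into the G\r{a}rding identity and absorbing the $\Vert v_h\Vert_{1,\kappa}^2$ term gives $\Vert v_h\Vert_{1,\kappa}\lesssim(1+\kappa L)S(v_h)$, which together with the coercivity bound is \eqref{eq:inf-sup}. If $\Omega$ is not star-shaped, Lemma \ref{lemma:stability} is unavailable, the duality step collapses, and only \eqref{eq:inf-sup1} survives.

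I expect the duality step to be the real obstacle: it needs both the $L$-explicit a priori bound of Lemma \ref{lemma:stability} for the adjoint impedance problem and enough regularity of its solution for $V_h$ to approximate it with an error that, even after multiplication by $\kappa^2$, can be absorbed into $\Vert v_h\Vert_{1,\kappa}^2$. That is what pins down the threshold $\overline{h}(\kappa,p)$, and what breaks in the absence of the geometric hypothesis. Since the statement is quoted from \cite[Theorem 2.8(ii)]{graham2020domain}, one may alternatively just cite that reference for the explicit form of $\overline{h}(\kappa,p)$.
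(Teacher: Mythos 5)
Your proposal is correct and follows exactly the argument underlying the cited result: the $\kappa^2/\epsilon$ bound from coercivity \eqref{eq:coercive} alone (no mesh restriction, no geometry), and the $1+\kappa L$ bound via the G\r{a}rding identity plus a Schatz duality argument in which the adjoint impedance problem is controlled by Lemma \ref{lemma:stability} and the mesh threshold $\overline{h}(\kappa,p)$ absorbs the finite element approximation error of the adjoint solution. The paper itself offers no proof beyond the citation of \cite[Theorem 2.8(ii)]{graham2020domain}, and the only part you leave schematic --- the $\kappa$-explicit regularity and approximation estimate for $\xi$ that pins down $\overline{h}(\kappa,p)$ --- is precisely the part legitimately delegated to that reference.
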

\begin{remark} When $a_{\epsilon}(\cdot,\cdot)$ in Lemma \ref{lemma:contCoer} and Lemma \ref{lemma:infsup} are replaced by its adjoint sesquilinear form $\bar{a}_{\epsilon}(\cdot,\cdot)$:
$$  \bar{a}_{\epsilon}(v,w) \coloneqq (\nabla v,\nabla \bar{w})-(\kappa^2-\mathrm{i}\epsilon)(v,\bar{w}) +\mathrm{i}\kappa\left\langle v,\bar{w}\right\rangle, \quad v,w\in H^1(\Omega),  $$
the results still hold. Besides, the domain $\Omega$ in Lemma \ref{lemma:contCoer} and Lemma \ref{lemma:infsup} can be replaced by its subdomain $G$.
\end{remark}

\section{A domain decomposition preconditioner}
In this section, we are devoted to constructing a two-level domain decomposition preconditioner $B_{\epsilon}^{-1}$ for $A_\epsilon$. Specifically, we are going to construct
a coarse space to improve the robustness of the one level precondition proposed in \cite{kimn2007restricted} and \cite{stcyr2007optimized}.

\subsection{One level settings}\label{subsec:1level}

Let $\overline{\Omega}=\cup_{l=1}^N \overline{\Omega'_l}$ be a non-overlapping domain decomposition of $\Omega$. Each $\Omega'_l$, with diameter $d_l\sim d$, consists of a union of elements of the mesh $\mathcal{T}_h$. We can extend each $\Omega'_l$ by several layers of elements to a larger region $\Omega_l$. Then $\{\Omega_l\}_{l=1}^N$ form an overlapping domain decomposition of $\Omega$ and we denote the overlap by $\delta$. For a subdomain $\Omega_l$, define
    \begin{equation*}
        \Lambda(l)\coloneqq \{j:~ \Omega_j\cap\Omega_l \neq \emptyset\}.
    \end{equation*}
Namely, $\Lambda(l)$ consists of the indices of the subdomains that intersect with $\Omega_l$.

We make the following natural assumptions:
\begin{assumption}\label{assump:finiteOverlap}
(i) Every subdomain $\Omega_l$ is star-shaped with respect to a disc.

(ii)
%For any $x\in \Omega$, the maximal number of subdomains that cover $x$ is uniformed bounded, i.e.,
The number of indices in $\Lambda(l)$ is uniformly bounded with respect to $l$, i.e.,
    \begin{equation*}
        \#\Lambda(l)\lesssim 1,\quad \forall 1\leq l\leq N.
    \end{equation*}
\end{assumption}

For each $\Omega_l$, we use $R_l: V_h(\Omega)\rightarrow V_h(\Omega_l)$ to denote the natural restriction operator: $R_l v_h = v_h\vert_{\Omega_l}$. Then the finite element space on $\Omega_l$ can be written as
\begin{equation*}
    V_h(\Omega_l)\coloneqq \{ R_l v_h:~ v_h\in V_h(\Omega)\}.
\end{equation*}
We can also define the prolongation operator $E_l:V_h(\Omega_l)\rightarrow V_h(\Omega)$ as follows: for $v_h\in V_h(\Omega_l)$, $E_l v_h\in V_h(\Omega)$ is defined by its nodal values satisfying
\begin{equation*}
    (E_l v_h)(x) =
    \left\lbrace
    \begin{aligned}
         & v_h(x),\quad &  & \text{if }x\in\overline{\Omega}_l, \\
         & \quad 0,\quad      &  & \text{otherwise},
    \end{aligned}
    \right.
\end{equation*}
where $x$ is any finite element node on $\overline{\Omega}$. Besides, we use $E_l^*$ to denote the adjoint operator of $E_l$ with respect to $(\cdot,\cdot)$.

As in \cite{graham2020domain}, local variational problem on each $\Omega_l$ is solved. For general $F_l\in (H^1(\Omega_l))'$, the local variational problem is: find $u_l\in H^1(\Omega_l)$ such that
\begin{equation}\label{eq:localVF}
    a_{\epsilon,l}(u_l,v_l) = F_l(v_l),\quad \forall v_l\in H^1(\Omega_l),
\end{equation}
where the local sesquilinear form $a_{\epsilon,l}(\cdot,\cdot)$ with impedance boundary condition is defined by
\begin{equation}
    a_{\epsilon,l}(u_l,v_l) \coloneqq (\nabla u_l,\nabla \bar{v}_l)_{\Omega_l} - (\kappa^2+\mathrm{i}\epsilon)(u_l,\bar{v}_l)_{\Omega_l} -\mathrm{i}\kappa\left\langle u_l,\bar{v}_l\right\rangle_{\partial\Omega_l}.
\end{equation}
Then the local discrete variational problem is as follows: find $u_{h,l}\in V_h(\Omega_l)$ such that
\begin{equation}\label{eq:localDiscreteVF}
    a_{\epsilon,l}(u_{h,l},v_{h,l}) = F_l(v_{h,l}),\quad \forall v_{h,l}\in V_h(\Omega_l).
\end{equation}
The corresponding operator $A_{\epsilon,l}$ is defined as
\begin{equation*}
    (A_{\epsilon,l} u_{h,l},v_{h,l}) = a_{\epsilon,l}(u_{h,l},v_{h,l}),\quad\forall u_{h,l},v_{h,l}\in V_h(\Omega_l).
\end{equation*}
%Lemma \ref{lemma:contCoer} and Lemma \ref{lemma:infsup} is also valid for $a_{\epsilon,l}(\cdot,\cdot)$ and problem \eqref{eq:localDiscreteVF}.

Note that $V_h(\Omega_l)$ is not a subspace of $V_h(\Omega)$. A partition of unity $\{\chi_l\}_{l=1}^N$ is used to connect the local problems. $\chi_l:\overline{\Omega}\rightarrow\mathbb{R}$ satisfies
\begin{equation*}
    \supp \chi_l \subseteq \overline{\Omega}_l,\quad 0\leq\chi_l\leq 1 \quad\text{and}\quad \sum_{l=1}^{N}\chi_l = 1.
\end{equation*}
Furthermore, we assume that
\begin{equation*}
    \vert\nabla\chi_l\vert\lesssim\delta^{-1}.
\end{equation*}

Let $\pi_h: C^0(\Omega)\rightarrow V_h(\Omega)$ denote the standard nodal interpolation. Define $D_l:V_h(\Omega)\rightarrow V_h(\Omega)$ as
\begin{equation*}
    D_l(v_h) = \pi_h(\chi_l v_h),\quad \forall v_h\in V_h(\Omega).
\end{equation*}
The one-level weighted additive preconditioner introduced in \cite{kimn2007restricted} and \cite{stcyr2007optimized} is written as
\begin{equation}
    B_{\epsilon,WASI}^{-1} = \sum_{l=1}^{N} D_l E_l A_{\epsilon,l}^{-1}E_l^*. \label{new4.0}
\end{equation}

\subsection{A coarse space}\label{subsec:coarseSpace}

In this subsection we design a novel coarse space and combine it with WASI in a hybrid style. The basic idea of the construction of the coarse space can be described as follows:
At first, a generalized eigenvalue problem posed on the local discrete Helmholtz-harmonic space is solved for every subdomain. Next, some eigenfunctions for each subdomain are selected
and extended as functions in $V_h(\Omega)$ by applying $D_lE_l$ and making up the coarse space.

For $\lambda_h\in V_h(\partial\Omega_l)$, we use ${\mathcal E}_{\epsilon, l}(\lambda_h)\in V_h(\Omega_l)$ to denote the Helmholtz-harmonic extension of the
impedance boundary data $\lambda_h$, in the sense that
$$ a_{\epsilon,l}({\mathcal E}_{\epsilon, l}(\lambda_h),w_h) =\langle\lambda_h,w_h\rangle_{\partial\Omega_l},\quad \forall w_h\in V_{h}(\Omega_l). $$
The function ${\mathcal E}_{\epsilon, l}(\lambda_h)$ is always well-defined. The local discrete Helmholtz-harmonic space with impedance boundary data is defined as
\begin{equation}\label{eq:discreteHarmonic-new}
    V_h^{\partial}(\Omega_l)\coloneqq \{ v_h\in V_h(\Omega_l):~v_h={\mathcal E}_{\epsilon, l}(\lambda_h)~~\mbox{for~some}~~\lambda_h\in V_h(\partial\Omega_l)\}.
\end{equation}
This space is always well-defined.
%\begin{remark} A similar local Helmholtz-harmonic space was introduced in \cite{conen2014coarse}:
%\begin{equation}\label{eq:discreteHarmonic}
%    \tilde{V}_h^{\partial}(\Omega_l)\coloneqq \{ v_h\in V_h(\Omega_l):~ a_{\epsilon,l}(v_h,w_h) = 0,\forall w_h\in V^0_{h}(\Omega_l) \}.
%\end{equation}
%It is easy to check that the spaces $V_h^{\partial}(\Omega_l)$ and $\tilde{V}_h^{\partial}(\Omega_l)$ are the same in theory. But from implemental point of view, there are differences between them.
%The basis functions of the space $V_h^{\partial}(\Omega_l)$ can be directly obtained by applying $A^{-1}_{\epsilon,l}$ appearing in (\ref{new4.0}).
%However, the basis functions of the space $\tilde{V}_h^{\partial}(\Omega_l)$ have to be re-computed by solving Dirichlet boundary problems, which brings extra costs. In fact, these
%Dirichlet boundary problems may be not well-defined unless $\epsilon>0$ or $\kappa d\ll 1$.
%\end{remark}
Define the weighted Helmholtz-harmonic space
\begin{equation*}
    V^{\partial}_{h}(\Omega):=\{v_h=\sum_{l=1}^N D_l E_l v_l:~v_l\in V_{h}^{\partial}(\Omega_l)\}.
\end{equation*}

A generalized eigenvalue problem posed on $V_h^{\partial}(\Omega_l)$ can be introduced: find $\xi\in V_h^{\partial}(\Omega_l)$ such that
\begin{equation}\label{eq:geEigenvalue}
    (D_l E_l \xi,D_l E_l \theta)_{1,\kappa,\Omega_l} = \lambda (\xi,\theta)_{1,\kappa,\Omega_l},\quad \forall\theta\in V_h^{\partial}(\Omega_l).
\end{equation}
We need to emphasize that the inner-product $(D_l E_l\cdot, D_l E_l \cdot)_{1,\kappa,\Omega_l}$ is different from that used in \cite{bootland2021comparison} and \cite{conen2014coarse}
(see Section 6 for the details).

Denote the dimension of $V_h^\partial(\Omega_l)$ by $m_l$. Let $\{(\lambda_l^i,\xi_l^i)\}_{i=1}^{m_l}$ be the eigenpairs of \eqref{eq:geEigenvalue} satisfying
\begin{equation*}
    0\leq\lambda_l^1\leq\lambda_l^2\leq\cdots\leq\lambda_l^{m_l} \text{ and } (\xi_l^i,\xi_l^j)_{1,\kappa,\Omega_l}=
    \left\lbrace
    \begin{aligned}
        1,\, i=j \\
        0,\, i\neq j
    \end{aligned}.
    \right.
\end{equation*}
Then every $v\in V^\partial_h(\Omega_l)$ has a decomposition
\begin{equation}\label{eq:eigenDecomp}
    v = \sum_{i=1}^{m_l} (v,\xi_l^i)\xi_l^i.
\end{equation}
For a given tolerance $\rho\in (0, 1)$, let $m_l^\rho$ be such that $\lambda_l^{m_l^\rho-1}<\rho^2\leq\lambda_l^{m_l^\rho}$. Finally we can define the local coarse space $V^{\rho}_{h,0}(\Omega_l)$ as
\begin{equation*}
    V^{\rho}_{h,0}(\Omega_l) \coloneqq \text{span}\{\xi_l^i:~ i\geq m_l^\rho\}
\end{equation*}
and the projection $\Pi_l^\rho$ as
\begin{equation*}
    \Pi_l^\rho v \coloneqq \sum_{i=m_l^\rho}^{m_l}(v,\xi_l^i)\xi_l^i,\quad \forall v\in V^\partial_h(\Omega_l)\text{ satisfying\eqref{eq:eigenDecomp}}.
\end{equation*}

\begin{lemma}\label{lemma:eigenEstimate}
    The projection $\Pi_l^\rho$ is actually the $(\cdot,\cdot)_{1,\kappa,\Omega_l}$-orthogonal projection from $V^\partial_h(\Omega_l)$ to $V_{h,0}^{\rho}(\Omega_l)$. In addition, we have the stability estimate
    \begin{equation}\label{eq:eigenEstimate}
        \Vert D_l E_l(I-\Pi_l^\rho)v_h\Vert_{1,\kappa,\Omega_l}\leq\rho\Vert v_h\Vert_{1,\kappa,\Omega_l},\quad\forall v_h\in V^\partial_h(\Omega_l).
    \end{equation}
\end{lemma}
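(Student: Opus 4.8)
The plan is to establish the two claims of Lemma \ref{lemma:eigenEstimate} in sequence. First, to see that $\Pi_l^\rho$ is the $(\cdot,\cdot)_{1,\kappa,\Omega_l}$-orthogonal projection onto $V_{h,0}^\rho(\Omega_l)$: the eigenfunctions $\{\xi_l^i\}_{i=1}^{m_l}$ form a $(\cdot,\cdot)_{1,\kappa,\Omega_l}$-orthonormal basis of $V_h^\partial(\Omega_l)$, and $V_{h,0}^\rho(\Omega_l)=\mathrm{span}\{\xi_l^i: i\geq m_l^\rho\}$ is spanned by a subset of this basis. Given the expansion \eqref{eq:eigenDecomp}, the map $v\mapsto \sum_{i\geq m_l^\rho}(v,\xi_l^i)\xi_l^i$ is exactly truncation of the orthonormal expansion to the relevant index set, which is the definition of orthogonal projection; I would spell this out by checking that $\Pi_l^\rho v$ lies in $V_{h,0}^\rho(\Omega_l)$ and that $v-\Pi_l^\rho v = \sum_{i<m_l^\rho}(v,\xi_l^i)\xi_l^i$ is $(\cdot,\cdot)_{1,\kappa,\Omega_l}$-orthogonal to every $\xi_l^j$ with $j\geq m_l^\rho$, hence to all of $V_{h,0}^\rho(\Omega_l)$.

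For the stability estimate \eqref{eq:eigenEstimate}, I would write $w_h := (I-\Pi_l^\rho)v_h = \sum_{i=1}^{m_l^\rho-1}(v_h,\xi_l^i)\xi_l^i$. Applying $D_lE_l$ and using bilinearity,
\[
\Vert D_lE_l w_h\Vert_{1,\kappa,\Omega_l}^2 = \sum_{i,j=1}^{m_l^\rho-1}(v_h,\xi_l^i)\overline{(v_h,\xi_l^j)}\,(D_lE_l\xi_l^i, D_lE_l\xi_l^j)_{1,\kappa,\Omega_l}.
\]
The generalized eigenvalue relation \eqref{eq:geEigenvalue} gives $(D_lE_l\xi_l^i, D_lE_l\xi_l^j)_{1,\kappa,\Omega_l} = \lambda_l^i(\xi_l^i,\xi_l^j)_{1,\kappa,\Omega_l} = \lambda_l^i\delta_{ij}$, so the double sum collapses to $\sum_{i=1}^{m_l^\rho-1}\lambda_l^i|(v_h,\xi_l^i)|^2$. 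Since $i\leq m_l^\rho-1$ forces $\lambda_l^i\leq\lambda_l^{m_l^\rho-1}<\rho^2$ by the choice of $m_l^\rho$, this is bounded by $\rho^2\sum_{i=1}^{m_l^\rho-1}|(v_h,\xi_l^i)|^2 \leq \rho^2\sum_{i=1}^{m_l}|(v_h,\xi_l^i)|^2 = \rho^2\Vert v_h\Vert_{1,\kappa,\Omega_l}^2$, the last equality being Parseval for the orthonormal basis. Taking square roots yields \eqref{eq:eigenEstimate}.

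There is essentially no hard obstacle here: the lemma is a clean consequence of the spectral decomposition of a symmetric positive semi-definite generalized eigenvalue problem. The only point requiring mild care is the bookkeeping around the threshold index $m_l^\rho$ — making sure the strict inequality $\lambda_l^{m_l^\rho-1}<\rho^2$ is used for the discarded modes while $\rho^2\leq\lambda_l^{m_l^\rho}$ justifies keeping the rest in the coarse space — and noting that the generalized eigenvectors are simultaneously orthonormal in $(\cdot,\cdot)_{1,\kappa,\Omega_l}$ and "diagonalize" the form $(D_lE_l\cdot, D_lE_l\cdot)_{1,\kappa,\Omega_l}$, which is exactly the content of \eqref{eq:geEigenvalue}. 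If one wanted to be fully rigorous about the existence of such an eigenbasis, one would observe that $(\cdot,\cdot)_{1,\kappa,\Omega_l}$ is a genuine inner product on the finite-dimensional space $V_h^\partial(\Omega_l)$ and $(D_lE_l\cdot, D_lE_l\cdot)_{1,\kappa,\Omega_l}$ is a Hermitian positive semi-definite form, so the generalized eigenproblem admits the stated simultaneously diagonalizing basis with real nonnegative eigenvalues.
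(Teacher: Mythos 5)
Your proof is correct and follows exactly the standard spectral argument that the paper itself invokes: the paper's proof simply observes the orthogonality and then cites Lemma 2.11 of Spillane et al.\ for the estimate \eqref{eq:eigenEstimate}, which is precisely the computation (diagonalization of the semi-definite form by the generalized eigenbasis, truncation, and Parseval) that you carry out explicitly. No gaps; your version is just the unpacked form of the cited result.
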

\begin{proof}
    It is obvious that $(D_lE_l\cdot,D_lE_l\cdot)_{1,\kappa,\Omega_l}$ is positive semi-definite and $(\cdot,\cdot)_{1,\kappa,\Omega_l}$ is positive definite on $V_h^\partial(\Omega_l)$. The $(\cdot,\cdot)_{1,\kappa,\Omega_l}$-orthogonality of $\Pi_l^\rho$ follows from $(\cdot,\cdot)_{1,\kappa,\Omega_l}$-orthogonality of $\xi_l^i$ and the definition of $\Pi_l^\rho$. The estimate \eqref{eq:eigenEstimate} is just the property of the standard generalized Hermitian eigenvalue problem, see Lemma 2.11 in \cite{spillane2014abstract}.
\end{proof}
The coarse space is defined by
\begin{equation*}
    V^{\rho}_{h,0}(\Omega):=\{v_h=\sum_{l=1}^N D_l E_l v_l:~v_l\in V_{h,0}^{\rho}(\Omega_l)\}.
\end{equation*}
Since $ V^{\rho}_{h,0}(\Omega)$ is a subspace of $V^{\partial}_h(\Omega)$, we can define the restriction of $A_\epsilon$ on $V^{\rho}_{h,0}(\Omega)$ as $A^{\rho}_{0}:V^{\rho}_{h,0}(\Omega)\rightarrow V^{\rho}_{h,0}(\Omega)$ satisfying
\begin{equation*}
    (A^{\rho}_{0} u_0,v_0) = a_{\epsilon}(u_0,v_0),\quad\forall u_0,v_0\in V^{\rho}_{h,0}(\Omega)
\end{equation*}
and introduce an identical lifting operator $E_0:V^{\rho}_{h,0}(\Omega)\rightarrow V_h(\Omega)$.

Finally, a two-level hybrid preconditioner is defined as
\begin{equation}\label{eq:2levelPre}
    B_{\epsilon}^{-1} = (I-E_0 (A^{\rho}_{0})^{-1} E_0^* A_{\epsilon})B_{\epsilon, WASI}^{-1} + E_0 (A^{\rho}_{0})^{-1} E_0^*,
\end{equation}
where $I$ denotes the identical operator and $E_0^*$ denotes the adjoint operator of $E_0$ with respect to $(\cdot,\cdot)$. Then, from (\ref{new4.0}) we have
\begin{equation}
    B_{\epsilon}^{-1}A_{\epsilon}= (I-E_0 (A^{\rho}_{0})^{-1} E_0^* A_{\epsilon})\sum_{l=1}^{N} D_l E_l A_{\epsilon,l}^{-1}E_l^*A_{\epsilon}+ E_0 (A^{\rho}_{0})^{-1} E_0^* A_{\epsilon}.\label{new4.2}
\end{equation}

% \textcolor{red}{the basis transformation matrix from $V_h(\Omega)$ to $V^{\rho}_{h,0}(\Omega)$ could be constructed and denoted by $E_0$, i.e.
% \begin{equation*}
%     (\psi_1,\psi_2,...,\psi_{\text{dim}(V^{\rho}_{h,0}(\Omega))}) = (\phi_1,\phi_2,...,\phi_{\text{dim}(V_h(\Omega))}) E_0,
% \end{equation*}
% where $\psi_i$ and $\phi_i$ are basis of $V^{\rho}_{h,0}(\Omega)$ and $V_h(\Omega)$ respectively. Denote by $E_0^*$ the transpose of $E_0$.}

% For a general $F_0\in (V^{\rho}_{h,0}(\Omega))'$, consider the following coarse problem: find $u_{h,0}\in V^{\rho}_{h,0}(\Omega)$ such that
% \begin{equation*}
%     a_{\epsilon}(u_{h,0},v_{h,0}) = F_0(v_{h,0}),\quad \forall v_0\in V^{\rho}_{h,0}(\Omega).
% \end{equation*}
% This yields the linear system
% \begin{equation*}
%     A^{\rho}_{0}\mathbf{U}_0 = \mathbf{F}_0,
% \end{equation*}
% where $A^{\rho}_{0} = E_0^* A_{\epsilon} E_0$.

\subsection{Projection operators}\label{subsec:projOps}
In order to analyze the preconditioner \eqref{eq:2levelPre}, we need to simplify the form of $B_{\epsilon}^{-1}A_{\epsilon}$ by introducing several projection operators.

First of all, for $l=1,...,N$, the local projection operators $P_{\epsilon,l}: V_h(\Omega)\rightarrow V_h(\Omega_l)$ are defined as follows: for each $v_h\in V_h(\Omega)$, $P_{\epsilon,l} v_h\in V_h(\Omega_l)$ is defined to be the solution of the local problem
\begin{equation}\label{eq:localProj}
    a_{\epsilon,l}(P_{\epsilon,l}v_h,w_{h,l}) = a_{\epsilon}(v_h,E_l w_{h,l}),\quad \forall w_{h,l}\in V_h(\Omega_l).
\end{equation}
It is easy to see that $P_{\epsilon,l}=A_{\epsilon,l}^{-1} E_l^* A_{\epsilon}$. The coarse projection operator $P^{\rho}_0: V_h(\Omega)\rightarrow V^{\rho}_{h,0}(\Omega)$ could be defined by the same way. For each $v_h\in V_h(\Omega)$, define $P^{\rho}_0 v_h\in V^{\rho}_{h,0}(\Omega)$
as the solution of the problem
\begin{equation}\label{eq:coarseProj}
    a_{\epsilon}(P^{\rho}_0v_h,w_{h,0}) = a_{\epsilon}(v_h, w_{h,0}),\quad \forall w_{h,0}\in V^{\rho}_{h,0}(\Omega).
\end{equation}
Then we have $P^{\rho}_0=E_0 (A^{\rho}_{0})^{-1} E_0^* A_{\epsilon}$.

The well-posedness of \eqref{eq:localProj} and \eqref{eq:coarseProj} is guaranteed by \eqref{eq:coercive}. Finally, the global projection operator $P^{\rho}_{\epsilon}: V_h(\Omega)\rightarrow V_h(\Omega)$ is defined by
\begin{equation}\label{eq:globalProj}
    P^{\rho}_{\epsilon}= (I - P^{\rho}_{0})\sum_{l=1}^{N} D_l E_l P_{\epsilon,l} + P^{\rho}_{0}.
\end{equation}
It is easy to check that $P^{\rho}_{\epsilon} = B_\epsilon^{-1}A_\epsilon$.

In the forthcoming three sections,
%we establish a rigorous convergence result of the preconditioner $B_{\epsilon}^{-1}$ applied on $A_\epsilon$. Specifically,
we will estimate a lower bound of $(v_h,P^{\rho}_{\epsilon}v_h)$ and a upper bound of $\Vert P^{\rho}_{\epsilon}v_h\Vert_{1,\kappa}$ for any $v_h\in V_h(\Omega)$. These bounds
%we analyze the spectrum of the operator $P^{\rho}_{\epsilon}$, which
can reveal the convergence rate of the preconditioned GMRES method with the preconditioner $B_{\epsilon}^{-1}$.
The analysis is based on an important observation: for any $v_h\in V_h(\Omega)$, the global projection $P^{\rho}_{\epsilon}v_h$ can be expressed as
\begin{equation}\label{eq:observ}
    P^{\rho}_{\epsilon}v_h=(I - P^{\rho}_{0})v_h^{\partial}+v_h\quad\mbox{with}~~v_h^{\partial}\in V_h^{\partial}(\Omega),
\end{equation}
which will play a crucial rule in the analysis. Thanks to this equality, one needs only to investigate the approximation of the operator $P^{\rho}_{0}$ on the subspace $V_h^{\partial}(\Omega)$.

\section{Some auxiliary results}\label{sec:auxiliary}
In this section, we establish several technical lemmas, which will be used to prove the main results given later.

%We first reformulate $P^{\rho}_{\epsilon} v_h$, which will play a crucial rule in the analysis.
At first we give a simple result, which shows the equivalence of two Helmholtz-harmonic spaces in theory (not in numerical aspect).\\
{\bf Proposition 4.1}. Let $\tilde{V}_h^{\partial}(\Omega_l)$ denote the local Helmholtz-harmonic space with Dirichlet boundary data, which is defined as (refer to \cite{conen2014coarse})
\begin{equation}\label{eq:discreteHarmonic}
    \tilde{V}_h^{\partial}(\Omega_l)\coloneqq \{ v_h\in V_h(\Omega_l):~ a_{\epsilon,l}(v_h,w_h) = 0,\forall w_h\in V^0_{h}(\Omega_l) \}.
\end{equation}
Then we have $V_h^{\partial}(\Omega_l)=\tilde{V}_h^{\partial}(\Omega_l)$.
\begin{proof} It is clear that $V_h^{\partial}(\Omega_l)\subset\tilde{V}_h^{\partial}(\Omega_l)$. For any $v_h\in \tilde{V}_h^{\partial}(\Omega_l)$, define $\lambda_h\in
V_h(\partial\Omega_l)$ by
$$ \langle\lambda_h, \bar{w}_h\rangle_{\partial\Omega_l}=a_{\epsilon,l}(v_h,w_h), \quad\forall w_h\in V_{h}(\Omega_l). $$
Noting that $v_h\in \tilde{V}_h^{\partial}(\Omega_l)$, such a function $\lambda_h$ is well-defined. Then we have $v_h\in V_h^{\partial}(\Omega_l)$.

\end{proof}

Next we establish a key auxiliary result.
\begin{lemma}\label{lemma:new4.1} The global projection
    $P^{\rho}_{\epsilon} v_h$ can be reformulated as
    \begin{equation}\label{eq:globalProjReform}
        P^{\rho}_{\epsilon} v_h = (I-P^{\rho}_{0})\sum_{l=1}^{N}D_l E_l v_{h,l}^{\partial} + v_h,
    \end{equation}
    where $v_{h,l}^\partial$ is defined as $v_{h,l}^\partial \coloneqq P_{\epsilon,l} v_h  -R_l v_h$, and it just belongs to the local Helmholtz-harmonic space $V_h^\partial(\Omega_l)$.
\end{lemma}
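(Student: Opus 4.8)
The plan is to unravel the definitions of the two projection operators appearing in the global projection $P^{\rho}_{\epsilon}$ and to recognize the quantity $v_{h,l}^{\partial} \coloneqq P_{\epsilon,l}v_h - R_l v_h$ as a discrete Helmholtz-harmonic function. First I would start from the definition \eqref{eq:globalProj}, namely $P^{\rho}_{\epsilon} = (I-P^{\rho}_{0})\sum_{l=1}^{N} D_l E_l P_{\epsilon,l} + P^{\rho}_{0}$, apply it to $v_h$, and substitute $P_{\epsilon,l}v_h = v_{h,l}^{\partial} + R_l v_h$. This splits the first sum into $(I-P^{\rho}_{0})\sum_l D_l E_l v_{h,l}^{\partial}$ plus $(I-P^{\rho}_{0})\sum_l D_l E_l R_l v_h$. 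The key algebraic observation is that $\sum_l D_l E_l R_l v_h = v_h$: indeed $E_l R_l v_h$ is the zero-extension to $V_h(\Omega)$ of $v_h|_{\Omega_l}$, and $D_l$ multiplies nodally by the partition-of-unity function $\chi_l$, so $\sum_l D_l E_l R_l v_h = \pi_h\big(\sum_l \chi_l v_h\big) = \pi_h(v_h) = v_h$ using $\sum_l \chi_l = 1$ and that $v_h$ is already in $V_h(\Omega)$. Hence the remaining contribution is $(I-P^{\rho}_{0})v_h + P^{\rho}_{0}v_h = v_h$, which combines with the harmonic sum to give \eqref{eq:globalProjReform}.

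It then remains to check that $v_{h,l}^{\partial} = P_{\epsilon,l}v_h - R_l v_h$ lies in $V_h^{\partial}(\Omega_l)$. By Proposition 4.1 it suffices to verify $a_{\epsilon,l}(v_{h,l}^{\partial}, w_h) = 0$ for all $w_h \in V_h^0(\Omega_l)$, i.e. for $w_h$ vanishing on the interior boundary $\partial\Omega_l \setminus \partial\Omega$. For such $w_h$ one has $\langle R_l v_h, \bar w_h\rangle_{\partial\Omega_l}$ and $\langle v_h, \overline{E_l w_h}\rangle_{\partial\Omega}$ agreeing appropriately so that $a_{\epsilon,l}(R_l v_h, w_h) = a_{\epsilon}(v_h, E_l w_h)$: the volume integrals coincide because $E_l w_h$ is supported in $\overline{\Omega}_l$ and equals $w_h$ there, while the boundary term $-\mathrm{i}\kappa\langle \cdot,\cdot\rangle_{\partial\Omega_l}$ versus $-\mathrm{i}\kappa\langle\cdot,\cdot\rangle_{\partial\Omega}$ match because $w_h = 0$ on the portion of $\partial\Omega_l$ interior to $\Omega$, so only $\partial\Omega_l \cap \partial\Omega$ contributes and there $E_l w_h = w_h$. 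Combining this with the defining property \eqref{eq:localProj} of $P_{\epsilon,l}$, namely $a_{\epsilon,l}(P_{\epsilon,l}v_h, w_h) = a_{\epsilon}(v_h, E_l w_h)$, yields $a_{\epsilon,l}(v_{h,l}^{\partial}, w_h) = a_{\epsilon,l}(P_{\epsilon,l}v_h, w_h) - a_{\epsilon,l}(R_l v_h, w_h) = 0$, as desired. Finally, $\sum_l D_l E_l v_{h,l}^{\partial}$ belongs to $V_h^{\partial}(\Omega)$ by its very definition, so the formula \eqref{eq:observ} is recovered with $v_h^{\partial} = \sum_l D_l E_l v_{h,l}^{\partial}$.

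The only slightly delicate point, and the one I would state most carefully, is the identity $a_{\epsilon,l}(R_l v_h, w_h) = a_{\epsilon}(v_h, E_l w_h)$ for $w_h \in V_h^0(\Omega_l)$, since it hinges on the impedance boundary term and on $w_h$ vanishing on $\partial\Omega_l \setminus \partial\Omega$ — if $\Omega_l$ meets $\partial\Omega$ this requires tracking which pieces of the boundary integrals survive. Everything else is bookkeeping with the partition of unity and the nodal interpolation, together with the already-established Proposition 4.1 identifying $V_h^{\partial}(\Omega_l)$ with the Dirichlet-harmonic space $\tilde V_h^{\partial}(\Omega_l)$, which is exactly the characterization used to certify membership of $v_{h,l}^{\partial}$.
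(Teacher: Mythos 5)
Your proposal is correct and follows essentially the same route as the paper: the identity $\sum_{l} D_l E_l R_l v_h = v_h$ via the partition of unity and nodal interpolation, followed by verification that $a_{\epsilon,l}(v_{h,l}^{\partial}, w_h) = 0$ for $w_h \in V_h^0(\Omega_l)$ using the defining property of $P_{\epsilon,l}$ and the matching of $a_{\epsilon}(v_h, E_l w_h)$ with $a_{\epsilon,l}(R_l v_h, w_h)$, then invoking Proposition 4.1. Your treatment of the boundary terms in that matching is in fact slightly more explicit than the paper's, which simply writes the two sesquilinear forms as equal.
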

\begin{proof}
    Recalling that $\{\chi_l\}_{l=1}^N$ is a partition of unity and using the definition of $D_l$, we can rewrite $v_h$ as
    \begin{equation*}
        v_h = \sum_{l=1}^{N}\pi_h (\chi_l v_h) =\sum_{l=1}^N D_l v_h = \sum_{l=1}^{N} D_l E_l R_l v_h,
    \end{equation*}
    where the last equality holds from the fact that $\chi_l$ is supported on $\overline{\Omega}_l$ and $E_lR_l v_h - v_h$ vanishes on $\overline{\Omega}_l$. From the above decomposition, we have
    \begin{equation*}
        P^{\rho}_{0} v_h = v_h - (I-P^{\rho}_{0})v_h = v_h - (I-P^{\rho}_{0})\sum_{l=1}^{N} D_l E_l R_l v_h.
    \end{equation*}
    Combining it with \eqref{eq:globalProj} leads to
   $$ P^{\rho}_{\epsilon} v_h=(I - P^{\rho}_{0})\sum_{l=1}^{N} D_l E_l P_{\epsilon,l}v_h+ P^{\rho}_{0}v_h=v_h+(I-P^{\rho}_{0})\sum_{l=1}^{N} D_l E_l(P_{\epsilon,l}v_h-R_l v_h), $$
  which implies the equality \eqref{eq:globalProjReform}.

    Every $w_{h,l}\in V^{0}_{h}(\Omega_l)$ vanishes on $\partial\Omega_l\setminus\partial\Omega$, so $E_l w_{h,l}$ vanishes on $\Omega\backslash\Omega_l$ and $E_l w_{h,l}\in V_h(\Omega)$.
    Then, by the definition of $P_{\epsilon,l}$, we deduce that
    \begin{align*}
        a_{\epsilon,l}(v_{h,l}^\partial,w_{h,l})
         & = a_{\epsilon,l}(P_{\epsilon,l} v_h  -R_l v_h, w_{h,l})                 \\
         & = a_{\epsilon}(v_h,E_l w_{h,l})-a_{\epsilon,l}(R_l v_h, w_{h,l}) \\
         & = a_{\epsilon,l}(v_h, w_{h,l})-a_{\epsilon,l}(v_h, w_{h,l}) = 0, \quad\forall w_{h,l}\in V^{0}_{h}(\Omega_l).
    \end{align*}
  This means that $v_{h,l}^\partial\in \tilde{V}_h^{\partial}(\Omega_l)$, which implies that $v_{h,l}^\partial \in V_h^\partial(\Omega_l)$ by {\bf Proposition 4.1}.
\end{proof}

For a function $v_h\in V_h(\Omega)$, define
\begin{equation}
v_h^\partial = \sum_{l=1}^{N}D_l E_l v_{h,l}^\partial\quad\mbox{with}\quad v_{h,l}^\partial \coloneqq P_{\epsilon,l} v_h  -R_l v_h.\label{new4.7}
\end{equation}
Since $v_{h,l}^\partial \in V_h^\partial(\Omega_l)$ and the generalized eigenvalue problem \eqref{eq:geEigenvalue}
is defined on $V_h^\partial(\Omega_l)$, we can expect that $v_h^\partial\in V_h^\partial(\Omega)$ could be well approximated by functions in $V^{\rho}_{h,0}(\Omega)$.
This will be verified later.

The following lemma is a direct result of Assumption \ref{assump:finiteOverlap}. It is shown that the energy of the summation of items can be estimated by the sum of energies of these items
(see \cite[Lemma 4.2]{graham2017domain}).
\begin{lemma}\label{lemma:sumEnergyBound} Let $v_l\in H^1(\Omega)$ satisfy $\supp{v_l}\subseteq \overline{\Omega}_l$. Under Assumption \ref{assump:finiteOverlap} (ii), we have
%    Under Assumption \ref{assump:finiteOverlap}, let $v_l\in H^1(\Omega)$ satisfy $\supp{v_l}\subseteq \overline{\Omega}_l$, then there holds
    \begin{equation}\label{eq:sumEnergyBound}
        \left\Vert \sum_{l=1}^{N} v_l\right\Vert_{1,\kappa}^2 \lesssim \sum_{l=1}^{N} \Vert v_l \Vert_{1,\kappa}^2
    \end{equation}
\end{lemma}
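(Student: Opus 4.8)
The plan is to reduce the estimate for the full sum to an estimate over a bounded number of overlapping pieces, exactly as in the classical "finite overlap" or "colouring" argument. First I would fix $v_h \coloneqq \sum_{l=1}^N v_l$ and, since by hypothesis each $v_l$ is supported in $\overline{\Omega}_l$, observe that on any element $\tau \in \mathcal{T}_h$ only those $v_l$ with $\overline{\Omega}_l \supseteq \tau$ contribute. By Assumption \ref{assump:finiteOverlap}(ii) the number of such indices is bounded by a constant $C_0$ independent of $\tau$ and of $l$ (this is essentially $\#\Lambda(l) \lesssim 1$, possibly after remarking that $\tau \subseteq \overline{\Omega}_l$ forces the relevant indices to lie in some $\Lambda(l')$, so the bound transfers to the number of subdomains covering a given element).

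Next I would write $\|v_h\|_{1,\kappa}^2 = \sum_{\tau \in \mathcal{T}_h} \|v_h\|_{1,\kappa,\tau}^2$, and on each element apply the triangle inequality in the Euclidean norm on $\mathbb{R}^{C_0}$ together with Cauchy--Schwarz (i.e. $\bigl\|\sum_{l} a_l\bigr\|^2 \le (\#\{l: a_l \neq 0\}) \sum_l \|a_l\|^2$) to obtain $\|v_h\|_{1,\kappa,\tau}^2 \le C_0 \sum_{l:\, \tau \subseteq \overline{\Omega}_l} \|v_l\|_{1,\kappa,\tau}^2$. Summing over all $\tau$ and interchanging the order of summation gives
\begin{equation*}
\|v_h\|_{1,\kappa}^2 \le C_0 \sum_{l=1}^N \sum_{\tau \subseteq \overline{\Omega}_l} \|v_l\|_{1,\kappa,\tau}^2 = C_0 \sum_{l=1}^N \|v_l\|_{1,\kappa,\Omega_l}^2 = C_0 \sum_{l=1}^N \|v_l\|_{1,\kappa}^2,
\end{equation*}
where the last equality uses again $\supp v_l \subseteq \overline{\Omega}_l$. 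This is precisely \eqref{eq:sumEnergyBound} with the implicit constant equal to $C_0$, which depends only on the overlap bound in Assumption \ref{assump:finiteOverlap}(ii) and in particular not on $h$, $d$, $\kappa$ or $N$.

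I do not anticipate a serious obstacle here; the only point requiring a little care is the bookkeeping that turns the hypothesis "$\#\Lambda(l) \lesssim 1$" (a statement about subdomains meeting a given subdomain) into the statement actually needed, namely "the number of subdomains whose closure contains a given mesh element is $\lesssim 1$." These are equivalent up to constants because if $\tau \subseteq \overline{\Omega}_{l_1}$ and $\tau \subseteq \overline{\Omega}_{l_2}$ then $\Omega_{l_1} \cap \Omega_{l_2} \neq \emptyset$, so all such indices lie in $\Lambda(l_1)$ for any one of them. Alternatively, one can cite \cite[Lemma 4.2]{graham2017domain} directly, as the statement of the lemma already does, and simply record the colouring argument above as the proof.
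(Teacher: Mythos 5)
Your proof is correct and rests on the same two ingredients as the paper's: the finite-overlap hypothesis of Assumption \ref{assump:finiteOverlap}(ii) and a Cauchy--Schwarz count of the overlapping contributions; the only difference is bookkeeping, since you localize to mesh elements and count the subdomains covering each element, whereas the paper expands $\bigl\Vert\sum_l v_l\bigr\Vert_{1,\kappa}^2$ as a double sum of inner products and uses the support condition to restrict the inner sum to $k\in\Lambda(l)$ before applying Cauchy--Schwarz. Your remark reconciling ``$\#\Lambda(l)\lesssim 1$'' with the elementwise overlap count is the right (and only) point needing care, and it is handled correctly.
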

\begin{proof}
    Using Assumption \ref{assump:finiteOverlap}(ii) and Cauchy-Schwarz inequality, we can obtain
    \begin{align*}
        \left\Vert \sum_{l=1}^{N} v_l\right\Vert_{1,\kappa}^2
         & = \sum_{l=1}^{N}\left(v_l,\sum_{k=1}^{N}v_k\right)_{1,\kappa} = \sum_{l=1}^{N}\left(v_l,\sum_{k\in\Lambda(l)}v_k\right)_{1,\kappa}                                                           \\
         & \leq \left(\sum_{l=1}^{N}\Vert v_l\Vert_{1,\kappa}^2\right)^{\frac{1}{2}} \left(\sum_{l=1}^{N}\left(\sum_{k\in\Lambda(l)}\left\Vert  v_k\right\Vert_{1,\kappa}\right)^2\right)^{\frac{1}{2}} \\
         & \leq \left(\sum_{l=1}^{N}\Vert v_l\Vert_{1,\kappa}^2\right)^{\frac{1}{2}} \left(\sum_{l=1}^{N}\#\Lambda(l)\sum_{k\in\Lambda(l)}\left\Vert v_k\right\Vert_{1,\kappa}^2\right)^{\frac{1}{2}}   \\
         & \lesssim \sum_{l=1}^{N}\Vert v_l\Vert_{1,\kappa}^2.
    \end{align*}
\end{proof}

The following lemma indicates that the function $v_h^{\partial}$ defined by (\ref{new4.7}) can be approximated by a function in $V^{\rho}_{h,0}(\Omega)$:

\begin{lemma}\label{lemma:vhpApprox} Let Assumption \ref{assump:finiteOverlap} (ii) be satisfied.
    Set $z_h = \sum_{l=1}^{N}D_l E_l \Pi_{l}^\rho v_{h,l}^\partial$, then $z_h\in V^{\rho}_{h,0}(\Omega)$ and we have
    \begin{equation}
        \Vert v_h^\partial - z_h\Vert_{1,\kappa} \lesssim \rho \left(\sum_{l=1}^{N}\Vert v_{h,l}^\partial \Vert_{1,\kappa,\Omega_l}^2\right)^{\frac{1}{2}}.\label{new4.1}
    \end{equation}
\end{lemma}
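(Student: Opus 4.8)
The plan is to use the definition $v_h^\partial - z_h = \sum_{l=1}^N D_l E_l (v_{h,l}^\partial - \Pi_l^\rho v_{h,l}^\partial) = \sum_{l=1}^N D_l E_l (I-\Pi_l^\rho) v_{h,l}^\partial$, so that the difference is a sum of functions each supported in $\overline{\Omega}_l$ (since $D_l = \pi_h(\chi_l \cdot)$ and $\supp\chi_l \subseteq \overline{\Omega}_l$). Then I would invoke Lemma \ref{lemma:sumEnergyBound} (which applies precisely because of Assumption \ref{assump:finiteOverlap}(ii) and the support property) to bound
\begin{equation*}
    \Vert v_h^\partial - z_h\Vert_{1,\kappa}^2 = \left\Vert \sum_{l=1}^N D_l E_l (I-\Pi_l^\rho) v_{h,l}^\partial \right\Vert_{1,\kappa}^2 \lesssim \sum_{l=1}^N \Vert D_l E_l (I-\Pi_l^\rho) v_{h,l}^\partial \Vert_{1,\kappa}^2.
\end{equation*}
Here one should be slightly careful: $D_l E_l (I-\Pi_l^\rho) v_{h,l}^\partial$ is a function on all of $\Omega$ but is supported in $\overline{\Omega}_l$, and its $\Vert\cdot\Vert_{1,\kappa}$ norm over $\Omega$ equals its $\Vert\cdot\Vert_{1,\kappa,\Omega_l}$ norm over $\Omega_l$.

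Next I would apply the stability estimate \eqref{eq:eigenEstimate} from Lemma \ref{lemma:eigenEstimate} to each term: since $v_{h,l}^\partial \in V_h^\partial(\Omega_l)$ (established in Lemma \ref{lemma:new4.1}), we have $\Vert D_l E_l (I-\Pi_l^\rho) v_{h,l}^\partial \Vert_{1,\kappa,\Omega_l} \leq \rho \Vert v_{h,l}^\partial \Vert_{1,\kappa,\Omega_l}$. Summing over $l$ gives
\begin{equation*}
    \Vert v_h^\partial - z_h\Vert_{1,\kappa}^2 \lesssim \sum_{l=1}^N \rho^2 \Vert v_{h,l}^\partial \Vert_{1,\kappa,\Omega_l}^2 = \rho^2 \sum_{l=1}^N \Vert v_{h,l}^\partial \Vert_{1,\kappa,\Omega_l}^2,
\end{equation*}
and taking square roots yields \eqref{new4.1}. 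The fact that $z_h \in V^\rho_{h,0}(\Omega)$ is immediate from the definitions: $\Pi_l^\rho v_{h,l}^\partial \in V_{h,0}^\rho(\Omega_l)$ and $V^\rho_{h,0}(\Omega)$ is exactly the span of $\sum_l D_l E_l v_l$ with $v_l \in V_{h,0}^\rho(\Omega_l)$.

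I do not anticipate a genuine obstacle here — this lemma is essentially a packaging of the two earlier results (Lemma \ref{lemma:sumEnergyBound} and the eigenvalue stability estimate \eqref{eq:eigenEstimate}). The only point requiring minor care is verifying the support property needed to apply Lemma \ref{lemma:sumEnergyBound}, namely that $\supp\big(D_l E_l (I-\Pi_l^\rho) v_{h,l}^\partial\big) \subseteq \overline{\Omega}_l$, which follows from $\supp\chi_l \subseteq \overline{\Omega}_l$ together with the definition of $D_l$ via nodal interpolation, and the matching of the global $\Vert\cdot\Vert_{1,\kappa}$ norm of such a locally supported function with its local $\Vert\cdot\Vert_{1,\kappa,\Omega_l}$ norm. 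Everything else is a direct chaining of inequalities.
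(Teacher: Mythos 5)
Your proposal is correct and follows exactly the paper's own argument: decompose $v_h^\partial - z_h = \sum_{l} D_l E_l (I-\Pi_l^\rho) v_{h,l}^\partial$, apply Lemma \ref{lemma:sumEnergyBound} using the support property of $D_l E_l$, and then the eigenvalue stability estimate \eqref{eq:eigenEstimate}. The additional remarks about the support of $D_l E_l(I-\Pi_l^\rho)v_{h,l}^\partial$ and the identification of the global and local norms are exactly the (implicit) justifications the paper relies on.
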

\begin{proof}
    It is a direct result of Lemma \ref{lemma:eigenEstimate} and Lemma \ref{lemma:sumEnergyBound}. In fact, we have
    \begin{align*}
        \Vert v_h^\partial - z_h\Vert_{1,\kappa}^2
         & = \Vert \sum_{l=1}^{N}D_l E_l(I-\Pi_{l}^\rho)v_{h,l}^\partial\Vert_{1,\kappa}^2                  \\
         & \lesssim \sum_{l=1}^{N} \Vert D_l E_l(I-\Pi_{l}^\rho)v_{h,l}^\partial\Vert_{1,\kappa,\Omega_l}^2 \\
         & \leq \rho^2\sum_{l=1}^{N} \Vert v_{h,l}^\partial \Vert_{1,\kappa,\Omega_l}^2,
    \end{align*}
    where the first inequality follows from \eqref{eq:sumEnergyBound} and the last inequality follows from \eqref{eq:eigenEstimate}.
\end{proof}

In the following we estimate $\Vert v_{h,l}^\partial \Vert_{1,\kappa,\Omega_l}$.
For each subdomain $\Omega_l$, we define $\widetilde{\Omega}_l$ as
    \begin{equation}\label{eq:bdryStripe}
        \overline{\widetilde{\Omega}_l} = \mathop\bigcup_{\tau\in \widetilde{\mathcal T}_{h,l}} \overline{\tau},\quad \text{where }~~
        \widetilde{\mathcal T}_{h,l}\coloneqq\{\tau\in\mathcal{T}_h:~ \overline{\tau}\cap\overline{\Omega}_l\neq\emptyset\}.
    \end{equation}
Intuitively speaking, the subdomain $\widetilde{\Omega}_l$ is generated by enlarging $\Omega_l$ with one layer elements.

The following lemma gives a bound of $\Vert v_{h,l}^\partial \Vert_{1,\kappa,\Omega_l}$ controlled by $\Vert v_h\Vert_{1,\kappa,\widetilde{\Omega}_l}$.
\begin{lemma}\label{lemma:vhpBound} Let Assumption \ref{assump:meshsize} and Assumption \ref{assump:finiteOverlap} (i) be satisfied. Then,
 \begin{equation}\label{eq:vhpBound}
%       \Vert v_{h,l}^\partial \Vert_{1,\kappa,\Omega_l} \lesssim (1+\kappa d_l)[1+(\kappa d_l)^{-1}]^{\frac{1}{2}}(\kappa h)^{-\frac{1}{2}} \Vert v_h\Vert_{1,\kappa,\widetilde{\Omega}_l}.
\Vert v_{h,l}^\partial \Vert_{1,\kappa,\Omega_l} \lesssim \min\left\lbrace {1+\kappa d_l,\frac{\kappa^2}{\epsilon}}\right\rbrace [1+(\kappa d_l)^{-1}]^{\frac{3}{2}}(\kappa h)^{-\frac{1}{2}} \Vert v_h\Vert_{1,\kappa,\widetilde{\Omega}_l}.
    \end{equation}

%    Under the Assumption \ref{assump:khd}, there holds
%    \begin{equation*}
%        \sup_{w_{h,l}\in V_h(\Omega_l)}\frac{\vert F_l(w_{h,l})\vert}{\Vert w_{h,l}\Vert_{1,\kappa,\Omega_l}} \lesssim [1+(\kappa d_l)^{-1}]^{\frac{1}{2}}(\kappa h)^{-\frac{1}{2}} \Vert v_h\Vert_{1,\kappa,\widetilde{\Omega}_l}.
%    \end{equation*}

 %If the star-shaped assumption on $\Omega_l$ is not satisfied, then we only have
 % \begin{equation}\label{new4.5}
 %       \Vert v_{h,l}^\partial \Vert_{1,\kappa,\Omega_l} \lesssim \frac{\kappa}{\epsilon}\sqrt{\frac{\kappa}{h}} \Vert v_h\Vert_{1,\kappa,\widetilde{\Omega}_l}.
 %   \end{equation}
\end{lemma}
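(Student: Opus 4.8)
The plan is to apply the discrete inf-sup condition \eqref{eq:inf-sup} directly to $v_{h,l}^\partial$ on the subdomain $\Omega_l$. This is legitimate: $\Omega_l$ is star-shaped by Assumption~\ref{assump:finiteOverlap}(i) and has characteristic length scale $\sim d_l$, Assumption~\ref{assump:meshsize} places $h$ below the relevant mesh threshold, and the Remark following Lemma~\ref{lemma:infsup} allows $\Omega$ to be replaced by a subdomain. Thus
\[
\|v_{h,l}^\partial\|_{1,\kappa,\Omega_l}\lesssim\min\left\{1+\kappa d_l,\frac{\kappa^2}{\epsilon}\right\}\sup_{0\neq w_h\in V_h(\Omega_l)}\frac{|a_{\epsilon,l}(v_{h,l}^\partial,w_h)|}{\|w_h\|_{1,\kappa,\Omega_l}},
\]
and it remains to estimate the numerator. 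Since $v_{h,l}^\partial=P_{\epsilon,l}v_h-R_l v_h$, the defining relation \eqref{eq:localProj} gives $a_{\epsilon,l}(v_{h,l}^\partial,w_h)=a_\epsilon(v_h,E_l w_h)-a_{\epsilon,l}(R_l v_h,w_h)$. The second term is controlled by the $L$-explicit continuity bound \eqref{eq:continuous} on $\Omega_l$, namely $|a_{\epsilon,l}(R_l v_h,w_h)|\lesssim[1+(\kappa d_l)^{-1}]\|v_h\|_{1,\kappa,\Omega_l}\|w_h\|_{1,\kappa,\Omega_l}$, which will be dominated by the bound obtained for the first term; so the heart of the matter is $a_\epsilon(v_h,E_l w_h)$.

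For $a_\epsilon(v_h,E_l w_h)$ I would use two observations. First, although $E_l w_h$ need not vanish outside $\Omega_l$, by its nodal definition it is supported in $\widetilde{\Omega}_l$ and vanishes at every node of $\partial\widetilde{\Omega}_l\setminus\partial\Omega$; hence $a_\epsilon(v_h,E_l w_h)$ coincides with the local sesquilinear form $a_{\epsilon,\widetilde{\Omega}_l}(v_h|_{\widetilde{\Omega}_l},E_l w_h)$, and \eqref{eq:continuous} applied on $\widetilde{\Omega}_l$ (which has length scale $\sim d_l$, again by the Remark) yields $|a_\epsilon(v_h,E_l w_h)|\lesssim[1+(\kappa d_l)^{-1}]\|v_h\|_{1,\kappa,\widetilde{\Omega}_l}\|E_l w_h\|_{1,\kappa,\widetilde{\Omega}_l}$. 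Second, I would bound $\|E_l w_h\|_{1,\kappa,\widetilde{\Omega}_l}$ by splitting $\widetilde{\Omega}_l=\Omega_l\cup S_l$ with $S_l\coloneqq\widetilde{\Omega}_l\setminus\Omega_l$ the one-element-thick layer: on $\Omega_l$ one has $E_l w_h=w_h$, while on $S_l$ the function $E_l w_h$ carries the nodal values of $w_h$ on $\partial\Omega_l$ and vanishes at the remaining nodes, so the discrete $L^2$-norm equivalence \eqref{eq:dL2NormSim} (used on the curve $\partial\Omega_l$ and on $S_l$) together with an inverse inequality on the $h$-scale layer give $\|E_l w_h\|_{0,S_l}\lesssim h^{1/2}\|w_h\|_{0,\partial\Omega_l}$ and $\|\nabla E_l w_h\|_{0,S_l}\lesssim h^{-1/2}\|w_h\|_{0,\partial\Omega_l}$; hence, using $\kappa h\lesssim 1$ from Assumption~\ref{assump:meshsize}, $\|E_l w_h\|_{1,\kappa,S_l}^2\lesssim h^{-1}\|w_h\|_{0,\partial\Omega_l}^2$. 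Then the multiplicative trace inequality \eqref{eq:multiTrace} on $\Omega_l$, combined with Young's inequality to render it $\kappa$-explicit, converts $\|w_h\|_{0,\partial\Omega_l}^2$ into $\kappa^{-1}[1+(\kappa d_l)^{-1}]\|w_h\|_{1,\kappa,\Omega_l}^2$, so that $\|E_l w_h\|_{1,\kappa,\widetilde{\Omega}_l}\lesssim(\kappa h)^{-1/2}[1+(\kappa d_l)^{-1}]^{1/2}\|w_h\|_{1,\kappa,\Omega_l}$. Multiplying the two observations gives $|a_\epsilon(v_h,E_l w_h)|\lesssim(\kappa h)^{-1/2}[1+(\kappa d_l)^{-1}]^{3/2}\|v_h\|_{1,\kappa,\widetilde{\Omega}_l}\|w_h\|_{1,\kappa,\Omega_l}$, which indeed dominates the $R_l v_h$ term; dividing by $\|w_h\|_{1,\kappa,\Omega_l}$ and inserting into the inf-sup estimate above produces \eqref{eq:vhpBound}.

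The step I expect to be the main obstacle is the treatment of $E_l w_h$ on the boundary layer $S_l$, caused by the fact that $E_l w_h$ leaks one element beyond $\Omega_l$. This requires care with the finite element geometry on two counts: verifying that $a_\epsilon(v_h,E_l w_h)$ is genuinely a local form on $\widetilde{\Omega}_l$ (so the continuity constant is $d_l$-explicit, not $\mathrm{diam}(\Omega)$-explicit, which would spoil the $\kappa$-dependence), and extracting the sharp $h^{-1/2}$ growth of $\|\nabla E_l w_h\|_{0,S_l}$ on this $O(h)$-width strip. The total power $3/2$ of $1+(\kappa d_l)^{-1}$ then decomposes into one full power from the continuity constant and one half-power from the boundary trace of $w_h$ on $\partial\Omega_l$, while the factor $(\kappa h)^{-1/2}$ is the signature of the thin layer $S_l$.
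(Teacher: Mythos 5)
Your proposal is correct and follows the paper's argument in all essentials: the local inf-sup condition \eqref{eq:inf-sup} on $\Omega_l$, the identification of $a_{\epsilon,l}(v_{h,l}^\partial,\cdot)$ with the functional $F_l(w_h)=a_\epsilon(v_h,E_lw_h)-a_{\epsilon,l}(R_lv_h,w_h)$, the $d_l$-explicit continuity bound on $\widetilde{\Omega}_l$, and the inverse-estimate-plus-multiplicative-trace treatment of the $O(h)$-thick region that produces the factor $(\kappa h)^{-1/2}[1+(\kappa d_l)^{-1}]^{1/2}$. The only difference is bookkeeping: the paper first uses the vanishing of $F_l$ on $V^0_h(\Omega_l)$ to replace $w_{h,l}$ by its boundary-node part $w^\partial_{h,l}$ and then runs the thin-strip estimate on $E_lw^\partial_{h,l}$, whereas you keep the full $w_h$, use $E_lw_h=w_h$ on $\Omega_l$, and confine the thin-strip estimate to the exterior layer $S_l=\widetilde{\Omega}_l\setminus\Omega_l$ --- the resulting constants are identical, and your remark that the interior contributions are dominated by the $(\kappa h)^{-1/2}$ term shows the vanishing property is dispensable for this upper bound.
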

\begin{proof} By the definition of $v_{h,l}^\partial$, for every $w_{h,l}\in V_{h}(\Omega_l)$ we have
\begin{align}\label{eq:vhpImpData}
    a_{\epsilon,l}(v_{h,l}^\partial,w_{h,l})
     & = a_{\epsilon,l}(P_{\epsilon,l}v_h,w_{h,l}) - a_{\epsilon,l}(R_l v_h, w_{h,l}) \cr
     & = a_{\epsilon}(v_h, E_l w_{h,l}) - a_{\epsilon,l}(R_l v_h, w_{h,l})          \cr
     & \coloneqq F_l(w_{h,l}).
\end{align}
This shows that $v_{h,l}^\partial$ solves a local impedance problem with data $F_l$ (see \eqref{eq:localVF}). We first estimate a upper bound of $|F_l(w_{h,l})|$.

We have proved, in the second part of Lemma \ref{lemma:new4.1}, that
    \begin{align*}
        F_l(w_{h,l})
         & = a_{\epsilon}(v_h, E_l w_{h,l}) - a_{\epsilon,l}(R_l v_h, w_{h,l})      \\
         & = a_{\epsilon,l}(R_l v_h, w_{h,l}) - a_{\epsilon,l}(R_l v_h, w_{h,l}) = 0
    \end{align*}
   for any $w_{h,l}\in V^{0}_{h}(\Omega_l)$.
   Now we consider the general situation. Let $\mathcal{N}(\overline{\Omega}_l)$ be the set consisting of all finite element nodes on $\overline{\Omega}_l$ and ${\mathcal{N}}^{\partial}_l\subseteq \mathcal{N}(\overline{\Omega}_l)$
   consist of those located on $\partial\Omega_l\setminus\partial\Omega$. For any $x\in\mathcal{N}^{\partial}_l$, define $w^{\partial}_{h,l}\in V_{h}(\Omega_l)$ by its nodal values as
    \begin{equation*}
        w^{\partial}_{h,l}(x) =
        \left\lbrace
        \begin{aligned}
             & w_{h,l}(x),\quad &  & \text{if } x \in\mathcal{N}^{\partial}_l, \\
             & \quad 0,\quad          &  & \text{otherwise}.
        \end{aligned}
        \right.
    \end{equation*}
    Then $w_{h,l}-w^{\partial}_{h,l}\in V^0_{h}(\Omega_l)$. According to the previous discussion, we have
    \begin{align}
        F_l(w_{h,l})
         & = a_{\epsilon}(v_h, E_l w_{h,l}) - a_{\epsilon,l}(R_l v_h, w_{h,l})                          \notag             \\
         & = a_{\epsilon}(v_h, E_l w^{\partial}_{h,l}) - a_{\epsilon,l}(R_l v_h, w^{\partial}_{h,l}).\label{eq:reformFl}
    \end{align}
    For the estimate of the first term in \eqref{eq:reformFl}, we note that $E_l w^{\partial}_{h,l}$ vanishes on all elements except for those touching $\partial\Omega_l\setminus\partial\Omega$,
    which implies that $E_l w^{\partial}_{h,l} = 0$ on $\Omega\setminus\widetilde{\Omega}_l$. Therefore, from the continuity of $a_{\epsilon}(\cdot,\cdot)$ we have
    %\begin{equation}\label{eq:contBound}
        \begin{align}\label{eq:contBound}
            \vert a_{\epsilon}(v_h, E_l w^{\partial}_{h,l}) \vert
             & = \vert(\nabla v_h,\nabla E_l w^{\partial}_{h,l})_{\widetilde{\Omega}_l} - (\kappa^2+\mathrm{i}\epsilon)(v_h,E_l w^{\partial}_{h,l})_{\widetilde{\Omega}_l} \cr
             & - \mathrm{i}\kappa\langle v_h,E_l w^{\partial}_{h,l}\rangle_{\partial\widetilde{\Omega}_l\cap\partial\Omega}\vert                                              \cr
             & \lesssim [1+(\kappa d_l)^{-1}]\Vert v_h\Vert_{1,\kappa,\widetilde{\Omega}_l} \Vert E_l w^{\partial}_{h,l}\Vert_{1,\kappa,\widetilde{\Omega}_l},
        \end{align}
    %\end{equation}

    Using the inverse estimate of the finite element functions and \eqref{eq:dL2NormSim}, and noting that $E_l w^{\partial}_{h,l}$ vanishes on
     all nodes except for those on $\partial\Omega_l\setminus\partial\Omega$, we can obtain
    %\begin{equation}\label{eq:whlDisNorm}
        \begin{align}\label{eq:whlDisNorm}
            \Vert E_l w^{\partial}_{h,l}\Vert_{1,\kappa,\widetilde{\Omega}_l}^2
             & = \Vert\nabla E_l w^{\partial}_{h,l}\Vert_{0,\widetilde{\Omega}_l}^2 + \kappa^2\Vert E_l w^{\partial}_{h,l}\Vert_{0,\widetilde{\Omega}_l}^2 \cr
             & \lesssim (h^{-2} +\kappa^2)\Vert E_l w^{\partial}_{h,l}\Vert_{0,\widetilde{\Omega}_l}^2                                                      \cr
            &\sim (h^{-2} +\kappa^2)h^2\sum_{x\in\mathcal{N}(\widetilde{\Omega}_l)}w^2_{h,l}(x)                                                                \cr
             & \sim (h^{-2} +\kappa^2)h^2\sum_{x\in\mathcal{N}^{\partial}_l}w^2_{h,l}(x)                                                                \cr
              &\lesssim ((\kappa h)^{-1}+{\kappa h})\kappa \Vert w_{h,l}\Vert_{\partial\Omega_l\setminus\partial\Omega}^2,
        \end{align}
    %\end{equation}
    where the last inequality follows by applying \eqref{eq:dL2NormSim} on $\partial\Omega_l\setminus\partial\Omega$. From \eqref{eq:multiTrace} and the Cauchy-Schwarz inequality, we have
    %\begin{equation}\label{eq:whlMultiTrace}
        \begin{align}\label{eq:whlMultiTrace}
            \kappa \Vert w_{h,l}\Vert_{\partial\Omega_l\setminus\partial\Omega}^2
             & \lesssim \frac{\kappa}{d_l}\Vert w_{h,l}\Vert_{0,\Omega_l}^2 + \kappa\Vert \nabla w_{h,l}\Vert_{\Omega_l}\Vert w_{h,l}\Vert_{0,\Omega_l} \cr
             & \lesssim (1+(\kappa d_l)^{-1})\kappa^2\Vert w_{h,l}\Vert_{0,\Omega_l}^2+ \Vert \nabla w_{h,l}\Vert_{0,\Omega_l}^2                        \cr
             & \leq (1+(\kappa d_l)^{-1})\Vert w_{h,l}\Vert_{1,\kappa,\Omega_l}^2.
        \end{align}
    %\end{equation}
    Combining \eqref{eq:contBound}, \eqref{eq:whlDisNorm} and \eqref{eq:whlMultiTrace}, and noting that $\kappa h\lesssim 1$, we can derive that
    \begin{equation*}
        \vert a_{\epsilon}(v_h, E_l w^{\partial}_{h,l}) \vert \lesssim [1+(\kappa d_l)^{-1}]^{\frac{3}{2}}(\kappa h)^{-\frac{1}{2}}\Vert v_h\Vert_{1,\kappa,\widetilde{\Omega}_l} \Vert w_{h,l}\Vert_{1,\kappa,\Omega_l}.
    \end{equation*}
    The estimate of the second term in \eqref{eq:reformFl} could be established by the same argument except that the domain considered here is $\Omega_l$ instead of $\widetilde{\Omega}_l$.
    And we have
    \begin{equation*}
        \vert a_{\epsilon,l}(R_l v_h, w^{\partial}_{h,l}) \vert \lesssim [1+(\kappa d_l)^{-1}]^{\frac{3}{2}}(\kappa h)^{-\frac{1}{2}}\Vert v_h\Vert_{1,\kappa,\Omega_l} \Vert w_{h,l}\Vert_{1,\kappa,\Omega_l}.
    \end{equation*}
    Combining these two bounds, we obtain that
    \begin{equation*}
        \vert F_l(w_{h,l})\vert \lesssim [1+(\kappa d_l)^{-1}]^{\frac{3}{2}}(\kappa h)^{-\frac{1}{2}} \Vert v_h\Vert_{1,\kappa,\widetilde{\Omega}_l}\Vert w_{h,l}\Vert_{1,\kappa,\Omega_l}.
    \end{equation*}
    The estimate \eqref{eq:vhpBound} then follows from (\ref{eq:inf-sup}) (see Remark 2.1).
\end{proof}

\section{Convergence analysis for the case with a slightly large $\epsilon$}\label{sec:theoryBigep}
In this section, we consider the case without particular assumption on the shape of $\Omega$.

The next theorem gives a bound of $\Vert (I-P^{\rho}_{0})v_h^\partial \Vert_{1,\kappa}$, which will eventually lead to the final convergence result of this section.
\begin{theorem} Let $v_{h}^\partial$ be defined by (\ref{new4.7}). Under Assumption \ref{assump:finiteOverlap} (ii), we have
    \begin{equation}\label{eq:vhpProjError}
        \Vert (I-P^{\rho}_{0})v_h^\partial \Vert_{1,\kappa}
        \lesssim \frac{\rho\kappa^2}{\epsilon} \left(\sum_{l=1}^{N}\Vert v_{h,l}^\partial \Vert_{1,\kappa,\Omega_l}^2\right)^{\frac{1}{2}}.
    \end{equation}
\end{theorem}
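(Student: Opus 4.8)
The plan is to use the variational characterization of $I - P_0^\rho$ together with the approximation result from Lemma \ref{lemma:vhpApprox}. Since $z_h = \sum_{l=1}^N D_l E_l \Pi_l^\rho v_{h,l}^\partial \in V^\rho_{h,0}(\Omega)$, Galerkin orthogonality gives $a_\epsilon((I-P_0^\rho)v_h^\partial, w_{h,0}) = 0$ for all $w_{h,0}\in V^\rho_{h,0}(\Omega)$; in particular $a_\epsilon((I-P_0^\rho)v_h^\partial, z_h - P_0^\rho v_h^\partial) = 0$ since $z_h - P_0^\rho v_h^\partial \in V^\rho_{h,0}(\Omega)$. Hence
\begin{equation*}
a_\epsilon\bigl((I-P_0^\rho)v_h^\partial,\, (I-P_0^\rho)v_h^\partial\bigr) = a_\epsilon\bigl((I-P_0^\rho)v_h^\partial,\, v_h^\partial - z_h\bigr).
\end{equation*}

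Next I would apply the coercivity estimate \eqref{eq:coercive} to the left-hand side, giving $\frac{\epsilon}{\kappa^2}\Vert (I-P_0^\rho)v_h^\partial\Vert_{1,\kappa}^2 \lesssim |a_\epsilon((I-P_0^\rho)v_h^\partial, v_h^\partial - z_h)|$, and the continuity estimate \eqref{eq:continuous} to the right-hand side, giving $|a_\epsilon((I-P_0^\rho)v_h^\partial, v_h^\partial - z_h)| \lesssim (1+(\kappa L)^{-1})\Vert (I-P_0^\rho)v_h^\partial\Vert_{1,\kappa}\Vert v_h^\partial - z_h\Vert_{1,\kappa}$. Dividing through by $\Vert (I-P_0^\rho)v_h^\partial\Vert_{1,\kappa}$ yields $\Vert (I-P_0^\rho)v_h^\partial\Vert_{1,\kappa} \lesssim \frac{\kappa^2}{\epsilon}(1+(\kappa L)^{-1})\Vert v_h^\partial - z_h\Vert_{1,\kappa}$. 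Then I would invoke \eqref{new4.1} from Lemma \ref{lemma:vhpApprox} to bound $\Vert v_h^\partial - z_h\Vert_{1,\kappa} \lesssim \rho(\sum_l \Vert v_{h,l}^\partial\Vert_{1,\kappa,\Omega_l}^2)^{1/2}$, which gives the claimed inequality up to the factor $(1+(\kappa L)^{-1})$.

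The one subtlety is the factor $1+(\kappa L)^{-1}$ coming from the continuity bound \eqref{eq:continuous}: to land exactly on \eqref{eq:vhpProjError} one either absorbs it into the implied constant under a standing assumption that $\kappa L \gtrsim 1$ (reasonable since $L$ is the diameter of $\Omega$ and $\kappa$ is large), or one carries it along and notes it is harmless. Alternatively, one can avoid the issue entirely by working with $|a_\epsilon(\cdot,\cdot)|$ directly and noting that $\Vert v_h^\partial - z_h\Vert_{1,\kappa}$ already carries the decisive factor $\rho$. I expect the only real ``obstacle'' is bookkeeping of these dimensional constants; the structural argument — Galerkin orthogonality, coercivity, continuity, then plug in the approximation lemma — is short and mechanical. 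I would present it in exactly that order, keeping the chain of inequalities in a single \begin{align*}...\end{align*} block so the $\frac{\kappa^2}{\epsilon}$ and $\rho$ factors appear transparently.
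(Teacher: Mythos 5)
Your proposal is correct and follows essentially the same route as the paper's proof: Galerkin orthogonality of $I-P_0^\rho$ to replace the second argument by $v_h^\partial - z_h$, coercivity \eqref{eq:coercive} on the left, continuity \eqref{eq:continuous} on the right, and then Lemma \ref{lemma:vhpApprox}. Your remark about the $1+(\kappa L)^{-1}$ factor is a fair bookkeeping point that the paper absorbs silently into the implied constant (with $L$ the fixed diameter of $\Omega$ and $\kappa$ large, $\kappa L\gtrsim 1$), exactly as you suggest.
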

\begin{proof}
    Recall that $z_h\in V^{\rho}_{h,0}(\Omega)$ defined in Lemma \ref{lemma:vhpApprox} satisfies $(I-P^{\rho}_{0})z_h = 0$. Then by the continuity and coercivity of $a_{\epsilon}(\cdot,\cdot)$, i.e. \eqref{eq:continuous} and \eqref{eq:coercive}, we have
    \begin{align*}
        \Vert (I-P^{\rho}_{0})v_h^\partial \Vert_{1,\kappa}^2
         & \lesssim \frac{\kappa^2}{\epsilon} \left\vert a_{\epsilon}((I-P^{\rho}_{0})v_h^\partial,(I-P^{\rho}_{0})v_h^\partial)\right\vert \\
         & = \frac{\kappa^2}{\epsilon} \left\vert a_{\epsilon}((I-P^{\rho}_{0})v_h^\partial,(I-P^{\rho}_{0})(v_h^\partial-z_h))\right\vert  \\
         & = \frac{\kappa^2}{\epsilon} \left\vert a_{\epsilon}((I-P^{\rho}_{0})v_h^\partial,v_h^\partial-z_h)\right\vert                      \\
         & \lesssim \frac{\kappa^2}{\epsilon} \Vert (I-P^{\rho}_{0})v_h^\partial \Vert_{1,\kappa} \Vert v_h^\partial-z_h \Vert_{1,\kappa},
    \end{align*}
    namely,
    \begin{equation*}
        \Vert (I-P^{\rho}_{0})v_h^\partial \Vert_{1,\kappa}
        \lesssim \frac{\kappa^2}{\epsilon} \Vert v_h^\partial-z_h \Vert_{1,\kappa}
        \lesssim \frac{\rho\kappa^2}{\epsilon} \left(\sum_{l=1}^{N}\Vert v_{h,l}^\partial \Vert_{1,\kappa,\Omega_l}^2\right)^{\frac{1}{2}},
    \end{equation*}
    where the last inequality holds from Lemma \ref{lemma:vhpApprox}.
\end{proof}

Before giving the final result, we make an assumption on the parameters.
\begin{assumption}\label{assump:epdh}
    The parameters $\epsilon$ and $d$ satisfy
    \begin{equation}
        \epsilon=\kappa^{1+\alpha}\quad \mbox{and}\quad d\sim\kappa^{-\beta},
    \end{equation}
    where $\alpha$ and $\beta$ satisfy $0\leq \alpha\leq\beta\leq 1$.
\end{assumption}

\begin{theorem}\label{thm:theoryBigep}
    Let Assumptions \ref{assump:meshsize}, \ref{assump:finiteOverlap} and \ref{assump:epdh} be satisfied. Define $\sigma = 2-(\alpha+\beta)+\frac{\gamma}{2}$.
    Then there exists a constant $C_0$ independent of the parameters $\kappa,\epsilon,h,d,\delta$ such that, if $\rho$ is chosen to satisfy $\rho\kappa^{\sigma}\leq C_0$,
    the operator $P^{\rho}_{\epsilon}=B^{-1}_{\epsilon}A_{\epsilon}$ possesses the spectrum properties
    \begin{equation}
   %  \begin{center}
   %     \begin{varwidth}{\textwidth}
   %         \begin{enumerate}
  \Vert P^{\rho}_{\epsilon} v_h\Vert_{1,\kappa}\lesssim \Vert v_h\Vert_{1,\kappa},\quad \forall v_h\in V_h(\Omega)\label{spectrum:4.1}
  \end{equation}
 and
 \begin{equation}
 \mid(v_h,P^{\rho}_{\epsilon} v_h)_{1,\kappa}\mid\gtrsim \Vert v_h\Vert_{1,\kappa}^2,\quad \forall v_h\in V_h(\Omega)\label{spectrum:4.2}.
  \end{equation}
  %          \end{enumerate}
  %      \end{varwidth}
  %  \end{center}
\end{theorem}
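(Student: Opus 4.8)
The plan is to establish the two spectral estimates \eqref{spectrum:4.1} and \eqref{spectrum:4.2} by combining the reformulation \eqref{eq:observ}, i.e. $P^{\rho}_{\epsilon}v_h = (I-P^{\rho}_{0})v_h^{\partial}+v_h$, with the error bound from the preceding theorem and the local bound of Lemma \ref{lemma:vhpBound}. First I would assemble the global bound on $\left(\sum_l \Vert v_{h,l}^{\partial}\Vert_{1,\kappa,\Omega_l}^2\right)^{1/2}$: squaring \eqref{eq:vhpBound}, summing over $l$, using the finite-overlap Assumption \ref{assump:finiteOverlap}(ii) to absorb the sum $\sum_l\Vert v_h\Vert_{1,\kappa,\widetilde\Omega_l}^2\lesssim\Vert v_h\Vert_{1,\kappa}^2$, and noting $d_l\sim d$, yields
\begin{equation*}
  \left(\sum_{l=1}^{N}\Vert v_{h,l}^{\partial}\Vert_{1,\kappa,\Omega_l}^2\right)^{1/2}\lesssim \min\left\{1+\kappa d,\tfrac{\kappa^2}{\epsilon}\right\}[1+(\kappa d)^{-1}]^{3/2}(\kappa h)^{-1/2}\Vert v_h\Vert_{1,\kappa}.
\end{equation*}
Plugging this into \eqref{eq:vhpProjError} gives $\Vert(I-P^{\rho}_{0})v_h^{\partial}\Vert_{1,\kappa}\lesssim \rho\,\Theta(\kappa)\,\Vert v_h\Vert_{1,\kappa}$, where $\Theta(\kappa)=\tfrac{\kappa^2}{\epsilon}\min\{1+\kappa d,\tfrac{\kappa^2}{\epsilon}\}[1+(\kappa d)^{-1}]^{3/2}(\kappa h)^{-1/2}$.

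Next I would convert the parameter scalings of Assumption \ref{assump:epdh} and Assumption \ref{assump:meshsize} into a power of $\kappa$. With $\epsilon=\kappa^{1+\alpha}$, $d\sim\kappa^{-\beta}$, $h\sim\kappa^{-(1+\gamma)}$ and $0\le\alpha\le\beta\le1$, we have $\kappa d\sim\kappa^{1-\beta}\gtrsim1$ so $1+\kappa d\sim\kappa^{1-\beta}$ and $[1+(\kappa d)^{-1}]^{3/2}\sim1$; also $\tfrac{\kappa^2}{\epsilon}=\kappa^{1-\alpha}$ and since $\alpha\le\beta$ we get $\min\{1+\kappa d,\tfrac{\kappa^2}{\epsilon}\}\sim\kappa^{1-\beta}$; finally $(\kappa h)^{-1/2}\sim\kappa^{\gamma/2}$. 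Hence $\Theta(\kappa)\sim\kappa^{1-\alpha}\cdot\kappa^{1-\beta}\cdot\kappa^{\gamma/2}=\kappa^{2-(\alpha+\beta)+\gamma/2}=\kappa^{\sigma}$, so $\Vert(I-P^{\rho}_{0})v_h^{\partial}\Vert_{1,\kappa}\lesssim \rho\kappa^{\sigma}\Vert v_h\Vert_{1,\kappa}$. Choosing $C_0$ small enough that $\rho\kappa^{\sigma}\le C_0$ forces $\Vert(I-P^{\rho}_{0})v_h^{\partial}\Vert_{1,\kappa}\le\tfrac12\Vert v_h\Vert_{1,\kappa}$ (with the implied constant absorbed into the choice of $C_0$).

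Finally I would read off the two spectral estimates from \eqref{eq:observ}. For \eqref{spectrum:4.1}, the triangle inequality gives $\Vert P^{\rho}_{\epsilon}v_h\Vert_{1,\kappa}\le\Vert(I-P^{\rho}_{0})v_h^{\partial}\Vert_{1,\kappa}+\Vert v_h\Vert_{1,\kappa}\le\tfrac32\Vert v_h\Vert_{1,\kappa}$. For \eqref{spectrum:4.2}, write $(v_h,P^{\rho}_{\epsilon}v_h)_{1,\kappa}=\Vert v_h\Vert_{1,\kappa}^2+(v_h,(I-P^{\rho}_{0})v_h^{\partial})_{1,\kappa}$, so by Cauchy–Schwarz
\begin{equation*}
  \bigl|(v_h,P^{\rho}_{\epsilon}v_h)_{1,\kappa}\bigr|\ge\Vert v_h\Vert_{1,\kappa}^2-\Vert v_h\Vert_{1,\kappa}\Vert(I-P^{\rho}_{0})v_h^{\partial}\Vert_{1,\kappa}\ge\tfrac12\Vert v_h\Vert_{1,\kappa}^2.
\end{equation*}
The main obstacle I anticipate is purely bookkeeping: correctly tracking how each $\kappa$-power in $\Theta(\kappa)$ arises, in particular verifying that the $\min$ collapses to $\kappa^{1-\beta}$ under the hypothesis $\alpha\le\beta$ and that the factor $[1+(\kappa d)^{-1}]^{3/2}$ is harmless because $\kappa d\gtrsim1$; one must also be careful that the finite-overlap constant in passing from $\sum_l\Vert v_h\Vert_{1,\kappa,\widetilde\Omega_l}^2$ to $\Vert v_h\Vert_{1,\kappa}^2$ does not secretly depend on $\kappa$ (it does not, since $\widetilde\Omega_l$ adds only one mesh layer and Assumption \ref{assump:finiteOverlap}(ii) controls multiplicities). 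No new idea beyond these earlier lemmas should be needed.
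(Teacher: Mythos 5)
Your proposal is correct and follows essentially the same route as the paper's proof: reduce everything to bounding $\Vert(I-P^{\rho}_{0})v_h^{\partial}\Vert_{1,\kappa}$ via \eqref{eq:vhpProjError} and \eqref{eq:vhpBound}, collapse the parameter-dependent factor to $\kappa^{\sigma}$ using Assumptions \ref{assump:meshsize} and \ref{assump:epdh} (in particular $\kappa d\gtrsim 1$ and $\alpha\le\beta$ so the $\min$ is $\lesssim\kappa^{1-\beta}$), and then conclude by the triangle and Cauchy--Schwarz inequalities from \eqref{eq:globalProjReform}. The only cosmetic difference is that you sum over subdomains before substituting the $\kappa$-scalings, whereas the paper substitutes first; the bookkeeping is identical.
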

\begin{proof} Notice that, under Assumption \ref{assump:epdh}, we have $\kappa d_l\sim\kappa^{1-\beta}\gtrsim 1$ and $\kappa^2/\epsilon=\kappa^{1-\alpha}\gtrsim\kappa^{1-\beta}$,
which implies that
$$ \min\left\lbrace {1+\kappa d_l,\frac{\kappa^2}{\epsilon}}\right\rbrace\lesssim 1+\kappa d_l\lesssim \kappa^{1-\beta}. $$
Then, it follows from (\ref{eq:vhpBound}) and Assumption \ref{assump:meshsize} that
\begin{equation}\label{new4.8}
        \Vert v_{h,l}^\partial \Vert_{1,\kappa,\Omega_l} \lesssim \kappa^{1-\beta}\cdot \kappa^{\frac{\gamma}{2}} \Vert v_h\Vert_{1,\kappa,\widetilde{\Omega}_l}.
    \end{equation}

Let $v_h^\partial$ be defined by (\ref{new4.7}).
    Combining \eqref{eq:vhpProjError} and \eqref{new4.8}, we deduce that
    \begin{align*}
        \Vert (I-P^{\rho}_{0})v_h^\partial \Vert_{1,\kappa}
         & \lesssim \rho \kappa^{1-\alpha} \left(\sum_{l=1}^{N}\Vert v_{h,l}^\partial \Vert_{1,\kappa,\Omega_l}^2\right)^{\frac{1}{2}}                              \\
         & \lesssim \rho\kappa^{2-\alpha+\gamma/2-\beta} \left(\sum_{l=1}^{N}\Vert v_h \Vert_{1,\kappa,\widetilde{\Omega}_l}^2\right)^{\frac{1}{2}} \\
         & \lesssim \rho\kappa^{\sigma}\Vert v_h\Vert_{1,\kappa},
    \end{align*}
    where the last inequality follows from Assumption \ref{assump:finiteOverlap} (ii) and the definition of $\sigma$. Hence, there exists a constant $C_0$ independent of the parameters $\kappa,\epsilon,h,d,\delta$ such that, if $\rho$ satisfies $\rho\kappa^{\sigma}\leq C_0$, we have
    \begin{equation*}
        \Vert (I-P^{\rho}_{0})v_h^\partial \Vert_{1,\kappa} \leq \frac{1}{2} \Vert v_h\Vert_{1,\kappa}.
    \end{equation*}
    It follows from \eqref{eq:globalProjReform} that
    \begin{align*}
        \Vert P^{\rho}_{\epsilon}v_h \Vert_{1,\kappa}
         & = \Vert (I-P^{\rho}_{0})v_h^\partial + v_h\Vert_{1,\kappa}                          \\
         & \leq \Vert (I-P^{\rho}_{0})v_h^\partial\Vert_{1,\kappa} + \Vert v_h\Vert_{1,\kappa} \\
         & \leq\frac{3}{2}\Vert v_h\Vert_{1,\kappa},
    \end{align*}
    which gives (\ref{spectrum:4.1}). On the other hand, using \eqref{eq:globalProjReform} again, we obtain
    \begin{align*}
        \mid(v_h,P^{\rho}_{\epsilon} v_h)_{1,\kappa}\mid
         & = \mid(v_h,(I-P^{\rho}_{0})v_h^\partial + v_h)_{1,\kappa}\mid                                                           \\
         & = \mid\Vert v_h\Vert_{1,\kappa}^2 + (v_h,(I-P^{\rho}_{0})v_h^\partial)_{1,\kappa}\mid                                   \\
         & \geq \Vert v_h\Vert_{1,\kappa}^2 - \Vert v_h\Vert_{1,\kappa} \Vert (I-P^{\rho}_{0})v_h^\partial\Vert_{1,\kappa} \\
         & \geq \frac{1}{2} \Vert v_h\Vert_{1,\kappa}^2.
    \end{align*}
This leads to (\ref{spectrum:4.2}).
\end{proof}

\begin{remark}\label{remark:4.1}
In Theorem \ref{thm:theoryBigep}, the star-shaped assumption on $\Omega$ does not required, so $\epsilon$ should be not smaller than $\kappa$ (otherwise, $\rho$ must be very small).
This restriction on $\epsilon$ is mild, since the stiffness matrix of the Helmholtz equation (1.1) with $\epsilon=\kappa$ is a good preconditioner for the stiffness matrix of the
Helmholtz equations (1.1) with small parameters $\epsilon\geq 0$.
\end{remark}

%\begin{remark} When $\alpha=0$ ($\epsilon=\kappa$) and $\beta=1-{\gamma\over 2}$, we have $\sigma=1+\gamma$. For a larger order $p$, a small parameter $\gamma$ is enough. In this case,
%the parameter $\sigma$ is slightly larger than $1$.
%\end{remark}

%\begin{remark} The proofs of Theorems \ref{eq:vhpProjError}-\ref{thm:theoryBigep} are based on the important observation that the function $v_h^{\partial}$ defined by (\ref{new4.7})
% just belongs to the space $V_h^{\partial}(\Omega)$, which makes the proofs concise.
%\end{remark}

\begin{remark} In this section we have not assumed that $d/\delta$ is bounded (i.e., with large overlap), but the dimension of the local coarse space $V^{\rho}_{h,0}(\Omega_l)$
with a small overlap size $\delta$ may be larger than that with a large overlap size $\delta$ for the same tolerance $\rho$. 
%Besides, a smaller subdomain size $d_l$ satisfying $\kappa d_l \lesssim 1$ is also allowed. In this case, the parameter $\sigma$ in Theorem \ref{thm:theoryBigep} needs be be slightly modified.
\end{remark}

\section{Convergence analysis for the case with a small $\epsilon$}\label{sec:theorySmallep}
In this section, we consider the case with a small absorption parameter $\epsilon$. To this end, we first give an assumption.
\begin{assumption}\label{assump:new5.1}
The domain $\Omega$ is star-shaped with respect to a disk. The parameters $\epsilon$, $d$ and $\delta$ satisfy
$$ 0\leq \epsilon\leq \kappa,\quad d/\delta \lesssim 1, \quad d\sim \kappa^{-(1+\tau)}~~\mbox{with}~~0\leq\tau\leq {1\over 3}.$$
\end{assumption}

For $v_h\in V_h(\Omega)$, set $e_h=(I-P^{\rho}_{0})v_h$. Let $w_h\in V_h(\Omega)$ be the solution to the following adjoint problem:
    \begin{equation}\label{eq:proofLitepBoundDual}
        a_{\epsilon}(\phi_h,w_h) = (\phi_h,e_h),\quad \forall \phi_h\in V_h(\Omega).
    \end{equation}
\begin{lemma}\label{lemma:new5.1} Let Assumptions \ref{assump:meshsize}, \ref{assump:finiteOverlap}, \ref{assump:new5.1} be satisfied. Then, for the function $w_h$ defined by (\ref{eq:proofLitepBoundDual}),
there exists a sufficiently small constant $C_0$ such that when $\kappa d\leq C_0$, the following estimate holds
\begin{equation}
\inf\limits_{w_h^\rho\in V_h^{\rho}(\Omega)}\left\Vert w_h-w_h^\rho\right\Vert_{1,\kappa}\lesssim (d+\rho)\Vert e_h\Vert_{0,\Omega}.\label{new5.1}
\end{equation}
\end{lemma}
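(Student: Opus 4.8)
The plan is to approximate $w_h$ by a function in $V_h^{\rho}(\Omega)$ built out of the same ingredients as the coarse space, namely using the weighted harmonic pieces $P_{\epsilon,l}w_h - R_l w_h$ and applying $\Pi_l^{\rho}$ to each. Concretely, I would set $w_{h,l}^{\partial} := P_{\epsilon,l}w_h - R_l w_h$ and $w_h^{\partial} := \sum_{l=1}^N D_l E_l w_{h,l}^{\partial}$, exactly as in \eqref{new4.7} but with $w_h$ in place of $v_h$; by Lemma \ref{lemma:new4.1} each $w_{h,l}^{\partial}\in V_h^{\partial}(\Omega_l)$, and by Lemma \ref{lemma:vhpApprox} the function $z_h := \sum_{l=1}^N D_l E_l \Pi_l^{\rho} w_{h,l}^{\partial}$ lies in $V_h^{\rho}(\Omega)$ with $\Vert w_h^{\partial}-z_h\Vert_{1,\kappa}\lesssim \rho\,(\sum_l \Vert w_{h,l}^{\partial}\Vert_{1,\kappa,\Omega_l}^2)^{1/2}$. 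The natural candidate for $w_h^{\rho}$ is then $z_h$, and the triangle inequality gives
\begin{equation*}
\Vert w_h - z_h\Vert_{1,\kappa} \leq \Vert w_h - w_h^{\partial} - w_h\Vert_{?}\ \text{— rather: } \Vert w_h - (w_h - w_h^{\partial}) \ldots
\end{equation*}
so I should instead recall the identity from Lemma \ref{lemma:new4.1} in the form $w_h = \sum_l D_l E_l R_l w_h$, whence $w_h - z_h = \sum_l D_l E_l(R_l w_h + w_{h,l}^{\partial} - \Pi_l^{\rho} w_{h,l}^{\partial})$. This is not yet in $V_h^{\rho}(\Omega)$; the cleaner route is: $w_h + (I-\Pi\text{-corrections})$ — precisely, write $w_h^{\rho} := \sum_l D_l E_l \Pi_l^{\rho}(R_l w_h + w_{h,l}^{\partial})$ if $R_l w_h + w_{h,l}^{\partial} = P_{\epsilon,l} w_h$ happens to be harmonic; since in general $P_{\epsilon,l} w_h \notin V_h^{\partial}(\Omega_l)$, I instead keep $w_h^{\rho} := z_h + \big(w_h - w_h^{\partial}\big)_{\text{projected}}$ and bound $\Vert w_h - w_h^{\partial} - z_h\Vert_{1,\kappa}$ using that $w_h - w_h^{\partial} = \sum_l D_l E_l R_l w_h - \sum_l D_l E_l w_{h,l}^{\partial}$, which telescopes against $w_h$ only after invoking \eqref{eq:globalProjReform}. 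The honest structure is: $\Vert w_h - w_h^{\rho}\Vert_{1,\kappa} \le \Vert w_h^{\partial} - z_h\Vert_{1,\kappa} + \Vert (w_h - w_h^{\partial}) - (w_h^{\rho} - z_h)\Vert_{1,\kappa}$, and I choose $w_h^{\rho}$ so that the second term is zero, i.e. $w_h^{\rho} := z_h + (w_h - w_h^{\partial})$ provided the latter already lies in the coarse space — it does not in general, so the extra $d$-term in \eqref{new5.1} must absorb $\Vert w_h - w_h^{\partial} - (\text{its coarse approximant})\Vert_{1,\kappa}$.

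Thus the two contributions to be estimated are: (a) the $\rho$-term $\rho\,(\sum_l \Vert w_{h,l}^{\partial}\Vert_{1,\kappa,\Omega_l}^2)^{1/2}$ from Lemma \ref{lemma:vhpApprox}, and (b) a $d$-term coming from the low-wave-number regularity of the dual solution $w_h$. For (a), apply Lemma \ref{lemma:vhpBound} with $w_h$ in place of $v_h$: since $\kappa d \le C_0$ and $\epsilon \le \kappa$ we are in the small-subdomain regime, so $\min\{1+\kappa d_l, \kappa^2/\epsilon\} \lesssim 1 + \kappa d_l \lesssim 1$, giving $\Vert w_{h,l}^{\partial}\Vert_{1,\kappa,\Omega_l}\lesssim (\kappa h)^{-1/2}\Vert w_h\Vert_{1,\kappa,\widetilde\Omega_l}$; summing over $l$ with Assumption \ref{assump:finiteOverlap}(ii) yields $(\sum_l\Vert w_{h,l}^{\partial}\Vert^2)^{1/2}\lesssim (\kappa h)^{-1/2}\Vert w_h\Vert_{1,\kappa}$. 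Then I use the stability estimate for the adjoint problem \eqref{eq:proofLitepBoundDual}: by Lemma \ref{lemma:stability} / Lemma \ref{lemma:infsup} applied to $\bar a_\epsilon$ (Remark 2.1), with right-hand side $e_h\in L^2$, one gets $\Vert w_h\Vert_{1,\kappa}\lesssim \Vert e_h\Vert_{0,\Omega}$ (up to the $L$-dependence, which is $\sim 1$). So term (a) is $\lesssim \rho (\kappa h)^{-1/2}\Vert e_h\Vert_{0,\Omega}$; but under Assumption \ref{assump:meshsize}, $(\kappa h)^{-1/2}\sim \kappa^{\gamma/2}$, and here I expect the statement to be using a \emph{sharper} stability bound for the dual problem that also gains a factor $\kappa^{-1}$ on $\Vert w_h\Vert_{1,\kappa}$ when the data is $e_h$ (adjoint/Aubin–Nitsche duality on the coarse scale), turning $(\kappa h)^{-1/2}$ into something $O(1)$, or more likely the $\kappa^{\gamma/2}$ is swallowed because $h\sim\kappa^{-(1+\gamma)}$ is used in yet another place. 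I would track the exponents carefully here; the clean target $(d+\rho)\Vert e_h\Vert_{0,\Omega}$ strongly suggests that the $\rho$ part is literally $\rho\Vert e_h\Vert_{0,\Omega}$ after a duality-improved bound on $\Vert w_h\Vert_{1,\kappa}$, and that this improved bound (something like $\Vert w_h\Vert_{1,\kappa}\lesssim \kappa h\,\cdot(\kappa h)^{-1/2}\cdots$) is the place where $h\sim\kappa^{-(1+\gamma)}$ with $\tau\le 1/3$ in Assumption \ref{assump:new5.1} gets used.

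For (b), the $d$-term: $w_h$ solves a Helmholtz problem (its adjoint) with $L^2$ data $e_h$ on the \emph{global} domain $\Omega$, which is star-shaped by Assumption \ref{assump:new5.1}; such solutions have extra regularity/decomposition — write $w_h = w_h^{H^2} + w_h^{\text{osc}}$ with $\Vert w_h^{H^2}\Vert_{2}\lesssim \Vert e_h\Vert_{0,\Omega}$ and $\Vert w_h^{\text{osc}}\Vert_{1,\kappa}\lesssim \Vert e_h\Vert_{0,\Omega}$ with the oscillatory part handled separately, or more simply use that the nodal/coarse interpolant of the smooth part approximates it to order $d$ in the $H^1$-seminorm on the $d$-scale coarse mesh. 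The standard domain-decomposition estimate is $\Vert w_h - I_{\text{coarse}} w_h\Vert_{1,\kappa}\lesssim (\kappa d + d\,\kappa\cdot\ldots)\Vert e_h\Vert_{0,\Omega}$, and under $\kappa d\le C_0$ this collapses to $\lesssim d\,\Vert e_h\Vert_{0,\Omega}$ once the $H^2$-bound is in hand. The coarse interpolant is then exhibited as an element of $V_h^{\rho}(\Omega)$ (or its difference from $w_h$ absorbed into the construction above), giving the $d\Vert e_h\Vert_{0,\Omega}$ contribution.

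I expect the main obstacle to be the careful bookkeeping of $\kappa$-powers that forces the two separate terms to recombine as exactly $(d+\rho)$ rather than $(d+\rho\kappa^{\gamma/2})$ or similar: this is where the constraint $\tau\le 1/3$ (hence $\kappa d^{?}$-type relations) and $\epsilon\le\kappa$ must be exploited, together with a duality (adjoint) argument that improves the naive $\Vert w_h\Vert_{1,\kappa}\lesssim \Vert e_h\Vert_{0,\Omega}$ to something that kills the $(\kappa h)^{-1/2}$ loss from Lemma \ref{lemma:vhpBound}. A secondary subtlety is that $P_{\epsilon,l}w_h - R_l w_h$ is harmonic but $P_{\epsilon,l}w_h$ itself is not, so the decomposition $w_h = \sum_l D_l E_l R_l w_h$ must be combined with the harmonic pieces exactly as in Lemma \ref{lemma:new4.1}, and the non-harmonic remainder must be shown to be the part approximated to order $d$ by a genuine coarse (piecewise-polynomial on the $d$-mesh) function lying in $V_h^{\rho}(\Omega)$.
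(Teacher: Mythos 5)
Your proposal has the right overall skeleton (split $w_h$ into local Helmholtz--harmonic pieces, apply $\Pi_l^\rho$ to them to get $w_h^\rho$, and bound the remainder), and your stability bound $\Vert w_h\Vert_{1,\kappa}\lesssim\Vert e_h\Vert_{0,\Omega}$ via the adjoint inf--sup condition with $L^2$ data is exactly what the paper does. But there are two genuine gaps that your own text flags without resolving, and both stem from choosing the wrong local decomposition. The paper does \emph{not} use $w_{h,l}^\partial:=P_{\epsilon,l}w_h-R_lw_h$ here. Instead it splits the restriction itself: $R_lw_h=w_{h,l}^\partial+w_{h,l}^0$, where $w_{h,l}^\partial\in V_h^\partial(\Omega_l)$ is the discrete Helmholtz--harmonic function that \emph{agrees with $w_h$ on} $\partial\Omega_l\setminus\partial\Omega$ (i.e.\ $a_{\epsilon,l}(w_{h,l}^\partial,\phi)=0$ for $\phi\in V_h^0(\Omega_l)$ with prescribed interface trace) and $w_{h,l}^0\in V_h^0(\Omega_l)$. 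Then $w_h=\sum_lD_lE_lw_{h,l}^\partial+\sum_lD_lE_lw_{h,l}^0$ exactly, the coarse approximant is $w_h^\rho=\sum_lD_lE_l\Pi_l^\rho w_{h,l}^\partial$, and the error is precisely $\sum_lD_lE_lw_{h,l}^0+\sum_lD_lE_l(I-\Pi_l^\rho)w_{h,l}^\partial$ --- no leftover ``non-harmonic remainder'' to be absorbed, which is the issue your construction never closes.

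With this decomposition your two unresolved difficulties disappear. First, the $\rho$-term does not require Lemma \ref{lemma:vhpBound} at all: one simply bounds $\Vert w_{h,l}^\partial\Vert_{1,\kappa,\Omega_l}\le\Vert w_{h,l}^0\Vert_{1,\kappa,\Omega_l}+\Vert R_lw_h\Vert_{1,\kappa,\Omega_l}$ and uses $\Vert w_h\Vert_{1,\kappa}\lesssim\Vert e_h\Vert_{0,\Omega}$, so the $(\kappa h)^{-1/2}$ loss you were hoping a ``duality-improved bound'' would kill never enters. Second, the $d$-factor does not come from $H^2$-regularity of the dual solution plus a grid coarse interpolant --- indeed a piecewise-polynomial interpolant on a $d$-mesh has no reason to lie in the spectral coarse space $V_{h,0}^\rho(\Omega)$, so that step of your argument would fail. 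The $d$ comes locally: $w_{h,l}^0$ vanishes on $\partial\Omega_l\setminus\partial\Omega$, so the Poincar\'e inequality gives $\Vert w_{h,l}^0\Vert_{0,\Omega_l}\lesssim d\Vert\nabla w_{h,l}^0\Vert_{0,\Omega_l}$; and since $\kappa d\le C_0$ the local form is coercive on $V_h^0(\Omega_l)$ (a G\aa rding-type absorption of $\kappa^2\Vert w_{h,l}^0\Vert_{0,\Omega_l}^2$ into $\frac12\Vert\nabla w_{h,l}^0\Vert_{0,\Omega_l}^2$), which combined with the identity $a_{\epsilon,l}(w_{h,l}^0,w_{h,l}^0)=(e_h-2\mathrm{i}\epsilon w_{h,l},\bar w_{h,l}^0)_{\Omega_l}$ yields $\Vert\nabla w_{h,l}^0\Vert_{0,\Omega_l}\lesssim d(\Vert e_h\Vert_{0,\Omega_l}+\epsilon\Vert w_{h,l}\Vert_{0,\Omega_l})$. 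Summing and using $\epsilon\lesssim\kappa$ and the stability bound gives the $d\Vert e_h\Vert_{0,\Omega}$ contribution. Without these two ingredients (the trace-matching harmonic splitting and the local Poincar\'e--coercivity estimate) your proof does not close.
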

\begin{proof} Let $\bar{a}_{\epsilon}(\cdot,\cdot)$ denote the conjugate form of $a_{\epsilon}(\cdot,\cdot)$. Notice that the domain $\Omega$ is star-shaped with respect to a disk.
It follows from Lemma \ref{lemma:infsup} (see Remark 2.1) that
    \begin{align}
        \Vert w_h\Vert_{1,\kappa}
         & \lesssim (1+\kappa)\sup_{\phi_h\in V_h(\Omega)}\frac{\vert \bar{a}_{\epsilon}(w_h,~\phi_h)\vert}{\Vert \phi_h\Vert_{1,\kappa}}
         =(1+\kappa)\sup_{\phi_h\in V_h(\Omega)}\frac{\vert a_{\epsilon}(\phi_h,~w_h)\vert}{\Vert \phi_h\Vert_{1,\kappa}}\cr
         & \lesssim (1+\kappa)\sup_{\phi_h\in V_h(\Omega)}\frac{\vert(\phi_h,\bar{e}_h)\vert}{\Vert \phi_h\Vert_{1,\kappa}}
         \lesssim (1+\kappa)  \sup_{\phi_h\in V_h(\Omega)}\frac{\Vert\phi_h\Vert_{0,\Omega} \Vert e_h\Vert_{0,\Omega}}{\Vert \phi_h\Vert_{1,\kappa}} \cr
        % & \lesssim (1+\kappa)  \sup_{\phi_h\in V_h(\Omega)}\frac{\Vert\phi_h\Vert \Vert e_h\Vert}{\Vert \phi_h\Vert_{1,\kappa}} \\
         & \lesssim \frac{1+\kappa}{\kappa}\Vert e_h\Vert_{0,\Omega}\lesssim \Vert e_h\Vert_{0,\Omega}.\label{new5.2}
    \end{align}
    Let $w_{h,l} = R_lw_h\in V_h(\Omega_l)$. Define $w^{\partial}_{h,l}\in V_h^\partial(\Omega_l)$ as
    \begin{equation*}
        \left\lbrace
        \begin{aligned}
             & a_{\epsilon,l}(w^{\partial}_{h,l},\phi_{h,l}) = 0, &  & \forall \phi_{h,l}\in V_{0,h}(\Omega_l),           \\
             &\quad  w^{\partial}_{h,l} = w_{h,l},                      &  & \text{on }\partial\Omega_l\setminus\partial\Omega.
        \end{aligned}
        \right.
    \end{equation*}
    Moreover, let $w_h^\rho = \sum_{l=1}^{N}D_lE_l\Pi_l^{\rho} w^{\partial}_{h,l}\in V^{\rho}_{h,0}(\Omega)$ and $w^0_{h,l} = w_{h,l}-w^{\partial}_{h,l}\in V^0_{h}(\Omega_l)$. We can immediately get
    \begin{align}
        \left\Vert w_h-w_h^\rho\right\Vert_{1,\kappa}^2
         & =\left\Vert \sum_{l=1}^{N}D_lE_l w^0_{h,l}+\sum_{l=1}^{N}D_l E_l (w^{\partial}_{h,l}-\Pi_l^\rho w^{\partial}_{h,l})\right\Vert_{1,\kappa}^2\notag                                                                    \\
         & \lesssim \sum_{l=1}^{N}\left\Vert D_l E_l w^0_{h,l}\right\Vert_{1,\kappa,\Omega_l}^2 + \sum_{l=1}^{N}\left\Vert D_l E_l(I-\Pi_l^\rho) w^{\partial}_{h,l}\right\Vert_{1,\kappa,\Omega_l}^2
         \label{eq:proofLitepBound1}.
    \end{align}
    For the first summation of \eqref{eq:proofLitepBound1}, from interpolation error estimate refer to (see Lemma 3.3 of \cite{graham2020domain}), we have
    \begin{align}
        \sum_{l=1}^{N}\left\Vert D_l E_l w^0_{h,l}\right\Vert_{1,\kappa,\Omega_l}^2
         & \lesssim \sum_{l=1}^{N}\left\Vert \chi_l w^0_{h,l}\right\Vert_{1,\kappa,\Omega_l}^2                               \cr
         & +\left((1+\kappa h)\frac{h}{\delta}\right)^2\sum_{l=1}^{N}\left\Vert  w^0_{h,l}\right\Vert_{1,\kappa,\Omega_l}^2.\label{new5.10}
    \end{align}
    Note that $w^0_{h,l}$ vanishes on $\partial\Omega_l\setminus\partial\Omega$, then there holds the Poincar\'e inequality
    \begin{equation*}
        \Vert w^0_{h,l}\Vert_{0,\Omega_l}^2\lesssim d^2 \Vert \nabla(w^0_{h,l})\Vert_{0,\Omega_l}^2,
    \end{equation*}
    hence
    \begin{equation}\label{eq:proofLitepBound2}
        \Vert w^0_{h,l}\Vert_{1,\kappa,\Omega_l}^2
        \lesssim (1+(\kappa d)^2)\Vert \nabla(w^0_{h,l})\Vert_{0,\Omega_l}^2.
        %\lesssim\Vert \nabla(w^0_{h,l})\Vert_{\Omega_l}^2
    \end{equation} Moreover, recall that $0\leq\chi_l\leq 1$ and $\vert\nabla\chi_l\vert\lesssim \delta^{-1}$, we can derive
    \begin{align}
         & \Vert\chi_l w^0_{h,l}\Vert_{0,\Omega_l}^2 \lesssim \Vert w^0_{h,l}\Vert_{0,\Omega_l}^2 \lesssim d^2 \Vert \nabla(w^0_{h,l})\Vert_{0,\Omega_l}^2,\label{eq:proofLitepBound3} \\
         & \begin{aligned}\label{eq:proofLitepBound4}
               \Vert\nabla(\chi_l w^0_{h,l})\Vert_{0,\Omega_l}^2
                & \lesssim \Vert w^0_{h,l}\nabla\chi_l \Vert_{0,\Omega_l}^2 + \Vert\chi_l\nabla w^0_{h,l}\Vert_{0,\Omega_l}^2   \\
                & \lesssim \frac{1}{\delta^2}\Vert w^0_{h,l} \Vert_{0,\Omega_l}^2 + \Vert \nabla w^0_{h,l} \Vert_{0,\Omega_l}^2 \\
                & \lesssim \left(1+\frac{d^2}{\delta^2}\right) \Vert \nabla w^0_{h,l} \Vert_{0,\Omega_l}^2.
           \end{aligned}
    \end{align}
    Substituting \eqref{eq:proofLitepBound2}, \eqref{eq:proofLitepBound3} and \eqref{eq:proofLitepBound4} into (\ref{new5.10}), yields
    \begin{equation}\label{eq:proofLitepBound5}
        \sum_{l=1}^{N}\left\Vert D_l E_l w^0_{h,l}\right\Vert_{1,\kappa,\Omega_l}^2 \lesssim \sum_{l=1}^{N}\left\Vert \nabla w^0_{h,l}\right\Vert_{0,\Omega_l}^2
    \end{equation}
    provided that $\kappa h\lesssim 1$, $\kappa d\lesssim 1$ and $\frac{d}{\delta}\lesssim 1$.
    For the second summation of \eqref{eq:proofLitepBound1}, from Lemma \ref{lemma:eigenEstimate}, there holds
    \begin{equation}\label{eq:proofLitepBound6}
        \begin{aligned}
             & \sum_{l=1}^{N}\left\Vert D_l E_l(I-\Pi_l^\rho) w^{\partial}_{h,l}\right\Vert_{1,\kappa,\Omega_l}^2
            \lesssim \rho^2\sum_{l=1}^{N}\left\Vert w^{\partial}_{h,l}\right\Vert_{1,k,\Omega_l}^2                                \\
             & \lesssim \rho^2\sum_{l=1}^{N}\left\Vert w^0_{h,l}\right\Vert_{1,k,\Omega_l}^2 + \rho^2\Vert w_{h}\Vert_{1,\kappa}^2 \\
             & \lesssim \rho^2\sum_{l=1}^{N}\Vert \nabla w^0_{h,l}\Vert_{0,\Omega_l}^2 + \rho^2\Vert e_h\Vert^2_{0,\Omega},
        \end{aligned}
    \end{equation}
    where the second inequality follows from triangular inequality and Assumption \ref{assump:finiteOverlap} (ii), and the last inequality follows from \eqref{eq:proofLitepBound2}
    (noting that $\kappa d\lesssim 1$)
    and (\ref{new5.2}). Inserting \eqref{eq:proofLitepBound5} and \eqref{eq:proofLitepBound6} into \eqref{eq:proofLitepBound1}, we can finally get
    \begin{equation}\label{eq:proofLitepBound7}
        \left\Vert w_h-w_h^\rho\right\Vert_{1,\kappa}^2 \lesssim (1+\rho^2) \sum_{l=1}^{N}\Vert \nabla w^0_{h,l}\Vert_{0,\Omega_l}^2 + \rho^2\Vert e_h\Vert^2_{0,\Omega}.
    \end{equation}
    In the following, we estimate $\Vert \nabla w^0_{h,l}\Vert_{0,\Omega_l}$. Since $w^0_{h,l}\in V_{0,h}(\Omega_l)\subset V_h(\Omega)$, we can derive
    \begin{align*}
        a_{\epsilon,l}(w^0_{h,l},w^0_{h,l})
         & = a_{\epsilon,l}(w_{h,l},w^0_{h,l}) - a_{\epsilon,l}(w^{\partial}_{h,l},w^0_{h,l})               \\
         & = \overline{a_{\epsilon,l}(w^0_{h,l},w_{h,l})} -2\mathrm{i}\epsilon(w_{h,l},\bar{w}^0_{h,l})_{\Omega_l} \\
         & = (e_h-2\mathrm{i}\epsilon w_{h,l},\bar{w}^0_{h,l})_{\Omega_l}.
    \end{align*}
    On the other hand, there exists a constant $c_0$ such that when $c_0 \kappa d\leq \frac{1}{\sqrt{2}}$, there holds
    \begin{align*}
        \mathop{\mathrm{Re}}a_{\epsilon,l}(w^0_{h,l},w^0_{h,l})
         & = \Vert \nabla w^0_{h,l}\Vert_{0,\Omega_l}^2 - \kappa^2
        \Vert  w^0_{h,l}\Vert_{0,\Omega_l}^2                                                                          \\
         & \geq  \Vert \nabla w^0_{h,l}\Vert_{0,\Omega_l}^2 - (c_0\kappa d)^2 \Vert  \nabla w^0_{h,l}\Vert_{0,\Omega_l}^2 \\
         & \geq  \frac{1}{2}\Vert \nabla w^0_{h,l}\Vert_{0,\Omega_l}^2.
    \end{align*}
    Therefore, we have
    \begin{equation*}
        \Vert \nabla w^0_{h,l}\Vert_{\Omega_l} \lesssim d(\Vert e_h\Vert^2_{0,\Omega_l}+\epsilon\Vert w_{h,l}\Vert_{0,\Omega_l}).
    \end{equation*}
    Combining it with \eqref{eq:proofLitepBound7} and noting that $\epsilon\lesssim \kappa$, we get
    \begin{align*}
        \left\Vert w_h-w_h^\rho\right\Vert_{1,\kappa}^2
         & \lesssim d^2 \sum_{l=1}^{N}(\Vert e_h\Vert^2_{0,\Omega_l}+\epsilon^2 \Vert w_{h,l}\Vert_{0,\Omega_l}^2) + \rho^2\Vert e_h\Vert^2_{0,\Omega} \\
         & \lesssim d^2 \Vert e_h\Vert^2_{0,\Omega} + d^2 \Vert w_{h}\Vert_{1,\kappa}^2+ \rho^2\Vert e_h\Vert^2_{0,\Omega}                                    \\
         & \lesssim (d^2+\rho^2)\Vert e_h\Vert^2_{0,\Omega}.
    \end{align*}
Here we have used (\ref{new5.2}) again. This leads to the desired inequality.
\end{proof}

\begin{theorem} Under the assumptions made in Lemma \ref{lemma:new5.1},
there exists a sufficiently small constant $C_0$ such that when $\kappa d\leq C_0$ and $\kappa \rho\leq C_0$, we have
\begin{equation}
    \Vert (I-P^{\rho}_{0})v_h\Vert_{1,\kappa}\lesssim \Vert v_h\Vert_{1,\kappa},\quad\forall v_h\in V_h(\Omega) \label{new5.3}
\end{equation}
and
\begin{equation}
    \Vert (I-P^{\rho}_{0})v^{\partial}_h\Vert_{1,\kappa}\lesssim
\rho \left(\sum_{l=1}^{N}\Vert v_{h,l}^\partial \Vert_{1,\kappa,\Omega_l}^2\right)^{\frac{1}{2}}
,\quad\forall v^{\partial}_h=\sum_{l=1}^{N}D_lE_l v^{\partial}_{h,l}\in V^{\partial}_h(\Omega).\label{new5.4}
\end{equation}
\end{theorem}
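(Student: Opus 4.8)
The plan is to derive both estimates from the duality bound in Lemma~\ref{lemma:new5.1}, using an Aubin--Nitsche type argument to control $\Vert e_h\Vert_{0,\Omega}$ in terms of the energy norm. Fix $v_h\in V_h(\Omega)$ and set $e_h=(I-P^\rho_0)v_h$. The Galerkin orthogonality of $P^\rho_0$ gives $a_\epsilon(e_h,w_{h,0})=0$ for all $w_{h,0}\in V^\rho_{h,0}(\Omega)$, so in the adjoint problem \eqref{eq:proofLitepBoundDual} we may take $\phi_h=e_h$ and subtract any $w_h^\rho\in V^\rho_{h,0}(\Omega)$:
\begin{equation*}
\Vert e_h\Vert_{0,\Omega}^2 = a_\epsilon(e_h,w_h) = a_\epsilon(e_h,w_h-w_h^\rho)\lesssim [1+(\kappa L)^{-1}]\,\Vert e_h\Vert_{1,\kappa}\,\Vert w_h-w_h^\rho\Vert_{1,\kappa}.
\end{equation*}
Taking the infimum over $w_h^\rho$ and invoking \eqref{new5.1} yields $\Vert e_h\Vert_{0,\Omega}^2\lesssim \Vert e_h\Vert_{1,\kappa}(d+\rho)\Vert e_h\Vert_{0,\Omega}$, hence $\Vert e_h\Vert_{0,\Omega}\lesssim (d+\rho)\Vert e_h\Vert_{1,\kappa}$. (Here I use $\kappa L\gtrsim 1$ to absorb the $(1+(\kappa L)^{-1})$ factor, or carry it through—it only contributes a bounded constant once $d$ is comparable to $L$ or smaller.)

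Next I would bound $\Vert e_h\Vert_{1,\kappa}$ by $\Vert v_h\Vert_{1,\kappa}$. Using coercivity \eqref{eq:coercive} together with Galerkin orthogonality, $\Vert e_h\Vert_{1,\kappa}^2\lesssim \frac{\kappa^2}{\epsilon}|a_\epsilon(e_h,e_h)| = \frac{\kappa^2}{\epsilon}|a_\epsilon(e_h,v_h)|$ is too lossy for small $\epsilon$; instead I would argue directly. Write $e_h = v_h - P^\rho_0 v_h$ and note $P^\rho_0$ is an $a_\epsilon$-projection, so for any $w_{h,0}\in V^\rho_{h,0}(\Omega)$ we have $a_\epsilon(e_h,e_h)=a_\epsilon(e_h,v_h-w_{h,0})$; but a cleaner route is to combine the just-proved $\Vert e_h\Vert_{0,\Omega}\lesssim(d+\rho)\Vert e_h\Vert_{1,\kappa}$ with the splitting of $a_\epsilon$. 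Concretely, $|a_\epsilon(e_h,e_h)| \geq \Vert\nabla e_h\Vert_{0,\Omega}^2 - \kappa^2\Vert e_h\Vert_{0,\Omega}^2 - \kappa\Vert e_h\Vert_{0,\partial\Omega}^2$ in real part; using $\kappa\Vert e_h\Vert_{0,\partial\Omega}^2\lesssim (1+(\kappa L)^{-1})\Vert e_h\Vert_{1,\kappa}^2$ via the multiplicative trace inequality and $\kappa^2\Vert e_h\Vert_{0,\Omega}^2\lesssim \kappa^2(d+\rho)^2\Vert e_h\Vert_{1,\kappa}^2$, one gets $\Vert\nabla e_h\Vert_{0,\Omega}^2 \lesssim |a_\epsilon(e_h,v_h)| + (\text{small})\Vert e_h\Vert_{1,\kappa}^2$ when $\kappa d$ and $\kappa\rho$ are small. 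Then $\Vert e_h\Vert_{1,\kappa}^2 = \Vert\nabla e_h\Vert_{0,\Omega}^2+\kappa^2\Vert e_h\Vert_{0,\Omega}^2\lesssim |a_\epsilon(e_h,v_h)|+(\kappa d+\kappa\rho)^2\Vert e_h\Vert_{1,\kappa}^2$, and absorbing the last term for $C_0$ small gives $\Vert e_h\Vert_{1,\kappa}^2\lesssim |a_\epsilon(e_h,v_h)|\lesssim [1+(\kappa L)^{-1}]\Vert e_h\Vert_{1,\kappa}\Vert v_h\Vert_{1,\kappa}$, whence \eqref{new5.3}.

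For \eqref{new5.4} I would repeat the same machinery with $v_h$ replaced by $v_h^\partial=\sum_l D_lE_lv_{h,l}^\partial\in V_h^\partial(\Omega)$ and set $e_h^\partial=(I-P^\rho_0)v_h^\partial$. The extra gain here comes from choosing the trial function $z_h\in V^\rho_{h,0}(\Omega)$ from Lemma~\ref{lemma:vhpApprox}, which satisfies $\Vert v_h^\partial - z_h\Vert_{1,\kappa}\lesssim\rho(\sum_l\Vert v_{h,l}^\partial\Vert_{1,\kappa,\Omega_l}^2)^{1/2}$ and $(I-P^\rho_0)z_h=0$. Then Galerkin orthogonality gives $a_\epsilon(e_h^\partial,e_h^\partial)=a_\epsilon(e_h^\partial,v_h^\partial-z_h)$, and combining the coercivity/trace bookkeeping above (using $\kappa d\leq C_0$, $\kappa\rho\leq C_0$) with the duality estimate applied to $e_h^\partial$ yields $\Vert e_h^\partial\Vert_{1,\kappa}\lesssim \Vert v_h^\partial-z_h\Vert_{1,\kappa}\lesssim\rho(\sum_l\Vert v_{h,l}^\partial\Vert_{1,\kappa,\Omega_l}^2)^{1/2}$. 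The main obstacle is the $\epsilon$-independence: one cannot afford the factor $\kappa^2/\epsilon$ from \eqref{eq:coercive}, so the argument must route through the star-shaped inf-sup/duality bound \eqref{eq:inf-sup} and the Aubin--Nitsche trick, and one must track the real-part coercivity of $a_{\epsilon,l}$ on $V^0_h(\Omega_l)$ (valid only under $\kappa d\leq C_0$) carefully enough that all the error terms carrying powers of $\kappa d$ or $\kappa\rho$ can be absorbed into the left-hand side. The bookkeeping of these absorptions, rather than any single inequality, is the delicate part.
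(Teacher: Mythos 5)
Your overall strategy is exactly the paper's: the Aubin--Nitsche duality step giving $\Vert e_h\Vert_{0,\Omega}\lesssim(d+\rho)\Vert e_h\Vert_{1,\kappa}$ via Lemma \ref{lemma:new5.1}, the real-part coercivity identity combined with Galerkin orthogonality, absorption of the $\kappa^2(d+\rho)^2\Vert e_h\Vert_{1,\kappa}^2$ term under $\kappa d\leq C_0$, $\kappa\rho\leq C_0$, and, for \eqref{new5.4}, testing against the approximant from Lemma \ref{lemma:vhpApprox}. That is the correct route and matches the paper's proof step for step.

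There is, however, one step that as written would fail: your lower bound for the real part includes the term $-\kappa\Vert e_h\Vert_{0,\partial\Omega}^2$, which you then propose to control by the multiplicative trace inequality as $(1+(\kappa L)^{-1})\Vert e_h\Vert_{1,\kappa}^2$ and treat as ``small''. It is not small --- that constant is $O(1)$ and cannot be absorbed into the left-hand side, so the bookkeeping you describe would not close. The resolution is that this term should not appear at all: the impedance term $-\mathrm{i}\kappa\langle e_h,\bar e_h\rangle$ is purely imaginary, so
\begin{equation*}
\mathop{\mathrm{Re}}a_\epsilon(e_h,e_h)=\Vert\nabla e_h\Vert_{0,\Omega}^2-\kappa^2\Vert e_h\Vert_{0,\Omega}^2,
\end{equation*}
exactly, which is what the paper exploits through the identity $\Vert e_h\Vert_{1,\kappa}^2=\mathop{\mathrm{Re}}a_\epsilon(e_h,e_h)+2\kappa^2\Vert e_h\Vert_{0,\Omega}^2$. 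With that one-line correction your argument goes through; only the $2\kappa^2\Vert e_h\Vert_{0,\Omega}^2$ term needs absorbing, and the duality bound handles it.
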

\begin{proof}
    From the definition of $a_\epsilon(\cdot,\cdot)$ and recalling that $e_h\coloneqq (I-P^{\rho}_{0})v_h$ satisfies
    \begin{equation}\label{eq:proofLitepBoundOrth}
        a_{\epsilon}(e_h,\phi_h) = 0,\quad\forall \phi_h\in V^{\rho}_{h,0}(\Omega),
    \end{equation}
    we can obtain
    \begin{equation}\label{eq:proofLitepBoundToL2}
        \begin{aligned}
            \Vert e_h\Vert_{1,\kappa}^2
             & = \mathop{\mathrm{Re}}a_\epsilon(e_h,e_h) +2\kappa^2\Vert e_h\Vert^2_{0,\Omega}                       \\
             & = \mathop{\mathrm{Re}}a_\epsilon(e_h,v_h)+2\kappa^2\Vert e_h\Vert^2_{0,\Omega}                        \\
             & \lesssim \Vert e_h\Vert_{1,\kappa} \Vert v_h\Vert_{1,\kappa} + 2\kappa^2\Vert e_h\Vert_{0,\Omega}^2.
        \end{aligned}
    \end{equation}
    We will estimate $\Vert e_h\Vert_{0,\Omega}$ by duality argument. Let $w_h$ be the solution to the following adjoint problem (\ref{eq:proofLitepBoundDual}).
    Taking $\phi_h = e_h$ in \eqref{eq:proofLitepBoundDual} and using \eqref{eq:proofLitepBoundOrth}, there holds
    \begin{align*}
        \Vert e_h\Vert^2_{0,\Omega}
         & = a_\epsilon(e_h,w_h)                                                    \\
         & = a_\epsilon(e_h,w_h-w_h^{\rho})                                         \\
         & \lesssim \Vert e_h\Vert_{1,\kappa} \Vert w_h -w_h^{\rho}\Vert_{1,\kappa} \\
         & \lesssim (d+\rho)\Vert e_h\Vert_{1,\kappa}\Vert e_h\Vert_{0,\Omega},
    \end{align*}
    namely,
    \begin{equation}
        \Vert e_h\Vert_{0,\Omega} \lesssim (d+\rho)\Vert e_h\Vert_{1,\kappa}. \label{new5.5}
    \end{equation}
    Combining it with \eqref{eq:proofLitepBoundToL2}, we can derived that
    \begin{equation*}
        \Vert e_h\Vert_{1,\kappa}\lesssim \Vert v_h\Vert_{1,\kappa},
    \end{equation*}
    provided that $\kappa d\lesssim 1$ and $\kappa \rho\lesssim 1$, where the implicit constant is small enough. This gives (\ref{new5.3}).

 When $e_h=(I-P^{\rho}_{0})v^{\partial}_h$, define $v^{\partial,\rho}_h=\sum_{l=1}^{N}D_lE_l\Pi_l^\rho v^{\partial}_{h,l}\in V^{\rho}_{h,0}(\Omega)$.
 Then, by Lemma \ref{lemma:vhpApprox} we get
   \begin{equation*}
        \begin{aligned}
            \Vert e_h\Vert_{1,\kappa}^2
             & = \mathop{\mathrm{Re}}a_\epsilon(e_h,e_h) +2\kappa^2\Vert e_h\Vert^2_{0,\Omega}                       \\
             & = \mathop{\mathrm{Re}}a_\epsilon(e_h,v^{\partial}_h-v^{\partial,\rho}_h)+2\kappa^2\Vert e_h\Vert^2_{0,\Omega}                        \\
             & \lesssim \rho\Vert e_h\Vert_{1,\kappa}\left(\sum_{l=1}^{N}\Vert v_{h,l}^\partial \Vert_{1,\kappa,\Omega_l}^2\right)^{\frac{1}{2}}+ 2\kappa^2\Vert e_h\Vert_{0,\Omega}^2.
        \end{aligned}
    \end{equation*}
 Substituting (\ref{new5.5}) into the above inequality and using the assumptions, yields (\ref{new5.4}).
 \end{proof}

When the parameters satisfy Assumption \ref{assump:new5.1}, the estimate (\ref{eq:vhpBound}) becomes
$$  \Vert v_{h,l}^\partial \Vert_{1,\kappa,\Omega_l} \lesssim (\kappa d_l)^{-{3\over 2}}(\kappa h)^{-{1\over 2}}\Vert v_h\Vert_{1,\kappa,\widetilde{\Omega}_l}=\kappa^{3\tau+\gamma\over 2}\Vert v_h\Vert_{1,\kappa,\widetilde{\Omega}_l} \lesssim\kappa \Vert v_h\Vert_{1,\kappa,\widetilde{\Omega}_l}. $$
Substituting it into (\ref{new5.4}) and using Lemma \ref{lemma:new4.1}, we can prove the following main result
(in the same manner as the proof of Theorem \ref{thm:theoryBigep}):
%(noting that the condition $\rho\kappa\leq C_0$ in Theorem 5.1 is stronger than $\rho \kappa^{\gamma\over 2}\leq C_0$):

\begin{theorem}\label{thm:theorySmallep}
    Let Assumptions \ref{assump:meshsize}, \ref{assump:finiteOverlap}, \ref{assump:new5.1} be satisfied.
    Then there exists a constant $C_0$ independent of the parameters $\kappa,\epsilon,~h,~d,\delta$ such that, if $\rho$ is chosen to satisfy $\rho\kappa\leq C_0$,
    the operator $P^{\rho}_{\epsilon}=B^{-1}_{\epsilon}A_{\epsilon}$ possesses the spectrum properties
    \begin{equation}
  \Vert P^{\rho}_{\epsilon} v_h\Vert_{1,\kappa}\lesssim \Vert v_h\Vert_{1,\kappa},\quad \forall v_h\in V_h(\Omega)\label{spectrum:5.1}
  \end{equation}
 and
 \begin{equation}
 \mid(v_h,P^{\rho}_{\epsilon} v_h)_{1,\kappa}\mid\gtrsim \Vert v_h\Vert_{1,\kappa}^2,\quad \forall v_h\in V_h(\Omega)\label{spectrum:5.2}.
  \end{equation}
 \end{theorem}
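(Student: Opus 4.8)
The plan is to derive the theorem directly from the reformulation of the global projection in Lemma~\ref{lemma:new4.1}, together with the two estimates established just above, in exactly the way Theorem~\ref{thm:theoryBigep} was proved in Section~\ref{sec:theoryBigep}. First I would recall from \eqref{eq:globalProjReform} that for every $v_h\in V_h(\Omega)$ one has $P^{\rho}_{\epsilon}v_h=(I-P^{\rho}_{0})v_h^{\partial}+v_h$, where $v_h^{\partial}=\sum_{l=1}^{N}D_lE_lv_{h,l}^{\partial}\in V_h^{\partial}(\Omega)$ with $v_{h,l}^{\partial}=P_{\epsilon,l}v_h-R_lv_h\in V_h^{\partial}(\Omega_l)$. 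Hence the whole statement reduces to bounding $\Vert(I-P^{\rho}_{0})v_h^{\partial}\Vert_{1,\kappa}$ by a small multiple of $\Vert v_h\Vert_{1,\kappa}$.

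To get such a bound I would chain together the three ingredients already available. Applying \eqref{new5.4} to $v_h^{\partial}$ gives $\Vert(I-P^{\rho}_{0})v_h^{\partial}\Vert_{1,\kappa}\lesssim\rho\bigl(\sum_{l}\Vert v_{h,l}^{\partial}\Vert_{1,\kappa,\Omega_l}^{2}\bigr)^{1/2}$. Next, under Assumption~\ref{assump:new5.1} the estimate \eqref{eq:vhpBound} collapses to the local bound $\Vert v_{h,l}^{\partial}\Vert_{1,\kappa,\Omega_l}\lesssim\kappa^{(3\tau+\gamma)/2}\Vert v_h\Vert_{1,\kappa,\widetilde{\Omega}_l}$, and since $\tau\le 1/3$ and $\gamma\le 1$ force $3\tau+\gamma\le 2$, the prefactor is $\lesssim\kappa$. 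Finally, summing over $l$ and using the finite-overlap property in Assumption~\ref{assump:finiteOverlap}(ii), $\sum_{l}\Vert v_h\Vert_{1,\kappa,\widetilde{\Omega}_l}^{2}\lesssim\Vert v_h\Vert_{1,\kappa}^{2}$. Combining these yields $\Vert(I-P^{\rho}_{0})v_h^{\partial}\Vert_{1,\kappa}\lesssim\rho\kappa\,\Vert v_h\Vert_{1,\kappa}$ with an implicit constant independent of $\kappa,\epsilon,h,d,\delta$, so one may fix $C_0$ small enough that the hypothesis $\rho\kappa\le C_0$ forces $\Vert(I-P^{\rho}_{0})v_h^{\partial}\Vert_{1,\kappa}\le\tfrac12\Vert v_h\Vert_{1,\kappa}$.

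Granted this, the spectral bounds follow from elementary manipulations of the identity $P^{\rho}_{\epsilon}v_h=(I-P^{\rho}_{0})v_h^{\partial}+v_h$. For \eqref{spectrum:5.1} the triangle inequality gives $\Vert P^{\rho}_{\epsilon}v_h\Vert_{1,\kappa}\le\Vert(I-P^{\rho}_{0})v_h^{\partial}\Vert_{1,\kappa}+\Vert v_h\Vert_{1,\kappa}\le\tfrac32\Vert v_h\Vert_{1,\kappa}$. For \eqref{spectrum:5.2} one expands $(v_h,P^{\rho}_{\epsilon}v_h)_{1,\kappa}=\Vert v_h\Vert_{1,\kappa}^{2}+(v_h,(I-P^{\rho}_{0})v_h^{\partial})_{1,\kappa}$ and estimates the second term by Cauchy--Schwarz, which is absorbed into half of $\Vert v_h\Vert_{1,\kappa}^{2}$, leaving $|(v_h,P^{\rho}_{\epsilon}v_h)_{1,\kappa}|\ge\tfrac12\Vert v_h\Vert_{1,\kappa}^{2}$.

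The substantive difficulty lies upstream rather than in this theorem: the duality argument of Lemma~\ref{lemma:new5.1}, which exploits the star-shaped hypothesis on $\Omega$ and the smallness $\kappa d\le C_0$ (guaranteed by Assumption~\ref{assump:new5.1}) through the adjoint inf-sup bound \eqref{eq:inf-sup} and the real-part coercivity of the local forms $a_{\epsilon,l}$, and the trace/inverse-inequality estimate of Lemma~\ref{lemma:vhpBound}. With those in hand, the only thing to watch in assembling the proof is that every implicit constant in the chain is genuinely parameter-independent, so that the single threshold $\rho\kappa\le C_0$ suffices; beyond that bookkeeping there is no further obstacle.
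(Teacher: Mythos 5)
Your proposal is correct and follows exactly the route the paper itself takes: it invokes the reformulation \eqref{eq:globalProjReform}, combines \eqref{new5.4} with the specialization of \eqref{eq:vhpBound} under Assumption \ref{assump:new5.1} (yielding the prefactor $\kappa^{(3\tau+\gamma)/2}\lesssim\kappa$), sums via Assumption \ref{assump:finiteOverlap}(ii), and then concludes with the same triangle-inequality and Cauchy--Schwarz manipulations used in Theorem \ref{thm:theoryBigep}. The paper gives only this sketch before the theorem statement, so your write-up is, if anything, slightly more explicit.
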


\begin{remark} In Theorem \ref{thm:theorySmallep}, we have not assumed $\epsilon\geq\kappa$, but we need to assume that the domain $\Omega$ is star-shaped with respect to a ball and the subdomain size $d$ should satisfy that $d\kappa$ is sufficiently small. As we will see in numerical experiments, the condition that $d\kappa$ is sufficiently small is unnecessary. Besides, the condition $d/\delta \lesssim 1$ is not absolutely necessary, namely, the case with a small overlap $\delta$ is also allowed. For this case, a larger coarse space is required in theory.
\end{remark}

\begin{remark} Theorem \ref{thm:theoryBigep} and Theorem \ref{thm:theorySmallep} indicate that the preconditioned GMRES iteration with the preconditioner $B^{-1}_{\epsilon}$ for solving the discrete system (\ref{new2.1}) possesses uniformly convergence rate independent of $h$ and $\kappa$ under suitable assumptions.
\end{remark}

\section{Some other coarse spaces}

In this section, we compare the newly proposed coarse space with some other related coarse spaces, and introduce a cheaper coarse space.

\subsection{Several existing coarse spaces}\label{subsec:otherCS}
There are several coarse spaces working for Helmholtz equations: the coarse grid space \cite{cai1992domain, graham2017domain}; the DtN coarse space \cite{conen2014coarse} and HGenEO coarse space \cite{bootland2021comparison}.

%The natural one is to use a coarse grid in order to capture the characteristic of the global solution. In these years, spectral coarse spaces are getting attention. Among them, the DtN coarse space and HGenEO coarse space, which %are deeply studied numerically, are relatively efficient for Helmholtz problems.

Throughout this subsection, for simplicity and without causing confusion, we will use the same notations used in Subsection 3.2 when introducing different coarse spaces and corresponding preconditioners.
\subsubsection{Coarse space based on the coarse grid}
The non-overlapping domain decomposition $\{\Omega'_l\}_{l=1}^N$ may form a coarse grid (triangulation) ${\mathcal T}_d$ on $\Omega$, then the simplest coarse space is defined on the grid ${\mathcal T}_d$.
This kind of coarse space was first analyzed in \cite{cai1992domain} for indefinite elliptic equations satisfying some assumptions. This coarse space was extended to solving the Helmholtz equations
with large absorption parameters in \cite{graham2017domain}.

%\cite{graham2017domain} for the shifted Helmholtz problem with large shift, and can be combined with one level preconditioner introduced in \cite{graham2020domain} to enhance the robustness when the shift is relatively small.

Consider a more general coarse grid $\mathcal{T}_H$ with the mesh size $H\geq h$ built on $\Omega$. Let $V_H(\Omega)$ be the corresponding finite element space. We call $V_H(\Omega)$ the grid coarse space. Note that in general, $V_H(\Omega)$ is not necessarily a subspace of $V_h(\Omega)$. In this case, we could define $E_0$ as the interpolation operator $\pi_h$ restricted on $V_H(\Omega)$ and the operator $A_{\epsilon,0}$ is defined as usual. These give the corresponding preconditioner. Later in the numerical experiments, we will focus on the case with $H\sim d$.

\subsubsection{The DtN coarse space}
The Dirichlet-to-Neumann (DtN) coarse space, which is first proposed and analyzed in \cite{nataf2011coarse,dolean2012analysis} for Laplace equations, is based on local eigenvalue problems on subdomain interfaces related to the DtN map. We will introduce the DtN coarse space first proposed in \cite{conen2014coarse} for Helmholtz equations.

The original DtN coarse space focus on the pure Helmholtz equation (with $\epsilon=0$). Here, for consistency, we still consider the model with a positive absorption parameter. For $\Omega_l$, let $\Gamma_l$ be the inner boundary, i.e. $\Gamma_l\coloneqq \partial\Omega_l\setminus\partial\Omega$. Given Dirichlet data $u_{\Gamma_l}$ on $\Gamma_l$, we can define the Helmholtz harmonic extension $u\in H^1(\Omega_l)$ as the solution of the following problem:

\begin{equation*}
    \left\lbrace
    \begin{aligned}
         & -\Delta u -(\kappa^2+\mathrm{i}\epsilon)u = 0,  &  & \text{in }\Omega_l,                          \\
         & \partial_{\mathbf{n}}u -\mathrm{i}\kappa u = 0, &  & \text{on }\partial\Omega_l\setminus\Gamma_l, \\
         & u = u_{\Gamma_l}                                &  & \text{on }\Gamma_l.
    \end{aligned}
    \right.
\end{equation*}
Then the DtN map of Dirichlet data $u_{\Gamma_l}$ is the Neumann data of $u$, namely
\begin{equation*}
    \textrm{DtN}_{\Omega_l}(u_{\Gamma_l}) = \partial_{\mathbf{n}}u\vert_{\Gamma_l}.
\end{equation*}
And the eigenvalue problem to be solved is given by
\begin{equation*}
    \textrm{DtN}_{\Omega_l}(u_{\Gamma_l}) = \lambda u_{\Gamma_l}.
\end{equation*}
Using the Green's formula and recall that $\tilde{V}_h^\partial(\Omega)$ denote the discrete Helmholtz-harmonic space with Dirichlet boundary data (see (\ref{eq:discreteHarmonic})), the discretized weak form of above eigenvalue problem writes: find $\xi\in V_h^\partial(\Omega)$ such that
\begin{equation}\label{eq:dtnEigen}
    \widetilde{a}_{\epsilon,l}(\xi,\theta) = \lambda \left\langle \xi,\bar{\theta}\right\rangle_{\Gamma_l},\quad \forall \theta\in \tilde{V}_h^\partial(\Omega_l),
\end{equation}
where
\begin{equation}\label{eq:neuSesq}
    \widetilde{a}_{\epsilon,l}(\xi,\theta)\coloneqq(\nabla\xi,\nabla\theta )_{\Omega_l}-(\kappa^2+\mathrm{i}\epsilon)(\xi,\theta)_{\Omega_l}-\mathrm{i}\kappa\langle\xi,\theta\rangle_{\partial\Omega_l\setminus\Gamma_l}
\end{equation}
is the sesquilinear form of the local Neumann problem. Similar to Subsection \ref{subsec:coarseSpace}, letting $\{(\lambda_l^i,\xi_l^i)\}_{i=1}^{m_l}$ be the eigenpairs of \eqref{eq:dtnEigen}, the local coarse space can be defined as
\begin{equation*}
    V_{h,0}^\rho(\Omega_l)\coloneqq\textrm{span}\left\lbrace \xi_l^i:~ \mathrm{Re}(\lambda_l^i)\leq \rho^2\right\rbrace,
\end{equation*}
where $\rho> 0$ is a given tolerance. The DtN coarse space is then built following the same procedure in Subsection \ref{subsec:coarseSpace}.

\subsubsection{The HGenEO coarse space}
The GenEO (Generalized Eigenproblems in the Overlap) coarse space was first defined and analyzed in \cite{spillane2014abstract} for abstract symmetric positive definite problems. In \cite{bootland2021comparison} a GenEO-type coarse space was proposed for Helmholtz equations, which is called HGenEO coarse space. We will briefly described it as follows.

Similarly, we focus on the Helmholtz problem with an positive absorption parameter since the pure problem is merely a specialization when $\epsilon=0$. In \cite{bootland2021comparison} the authors linked the
underlying Helmholtz problem to the positive definite Laplace problem, designing the following local eigenproblem posed on $V_h(\Omega_l)$: find $\xi\in V_h(\Omega_l)$ such that
\begin{equation}\label{eq:hgeneoEigen}
    \widetilde{a}_{\epsilon,l}(\xi,\theta) = \lambda(\nabla(D_l E_l \xi),\nabla(D_l E_l \theta))_{\Omega_l},\quad \forall \theta\in V_h(\Omega_l),
\end{equation}
where $\widetilde{a}_{\epsilon,l}(\cdot,\cdot)$ is defined by \eqref{eq:neuSesq} and $D_l,E_l$ is defined as in Subsection \ref{subsec:1level}. As usual, let $\{(\lambda_l^i,\xi_l^i)\}_{i=1}^{\textrm{dim}V_h(\Omega_l)}$ be the eigenpairs of \eqref{eq:hgeneoEigen}. Then the local coarse space is defined as
\begin{equation*}
    V_{h,0}^\rho(\Omega_l)\coloneqq\textrm{span}\left\lbrace \xi_l^i:~ \mathrm{Re}(\lambda_l^i)\leq \rho^2\right\rbrace,
\end{equation*}
where $\rho> 0$ is a given tolerance. Again, the HGenEO coarse space can be built as in Subsection \ref{subsec:coarseSpace}.
\subsubsection{Comparisons of the spectral coarse spaces}

It can be seen from \eqref{eq:dtnEigen} and \eqref{eq:hgeneoEigen} that, for DtN coarse space, eigenproblems posed on discrete Helmholtz-harmonic space $\tilde{V}_h^\partial(\Omega_l)$ are solved while for HGenEO coarse space, eigenproblems to be solved are posed on $V_h(\Omega_l)$. This leads to the fact that the size of eigenproblems defining the DtN coarse space is much smaller than the size of eigenproblems defining HGenEO coarse space. For both coarse spaces, indefinite eigenproblems are solved and the real part of the eigenvalues are used as a threshold criterion. This might include some redundant information and make the corresponding coarse space larger.
In our new approach, we choose different sesquilinear forms in the generalized eigenvalue problem \eqref{eq:geEigenvalue}. We change the sesquilinear form in both sides of \eqref{eq:hgeneoEigen} into $(\cdot,\cdot)_{1,\kappa,\Omega_l}$ and pose the eigenproblems on $V_h^\partial(\Omega_l)$, which defines a positive semi-definite eigenproblems with relatively smaller size. We believe that the proposed coarse space can capture more precise information compared to the DtN coarse space and the HGenEO coarse space. This will be shown later in the numerical experiments.

\begin{remark} By {\bf Proposition 4.1}, the spaces $V_h^{\partial}(\Omega_l)$ and $\tilde{V}_h^{\partial}(\Omega_l)$ are the same in theory. But from implemental point of view, there are differences between them.
The basis functions of the space $V_h^{\partial}(\Omega_l)$ can be directly obtained by applying $A^{-1}_{\epsilon,l}$ appearing in (\ref{new4.0}).
However, the basis functions of the space $\tilde{V}_h^{\partial}(\Omega_l)$ have to be re-computed by solving Dirichlet boundary problems, which brings extra costs. In fact, these
Dirichlet boundary problems may be not well-defined unless $\epsilon>0$ or $\kappa d\ll 1$.
\end{remark}

\subsection{An economic coarse space}\label{subsec:ecoCS}
In the construction of the spectral coarse spaces, the main cost of computation comes from solving the considered eigenproblems. In this subsection, we introduce an economic coarse space in order to avoid solving eigenproblems.

We still start from the construction of the local coarse spaces as in Subsection \ref{subsec:coarseSpace}. In Subsection \ref{subsec:coarseSpace}, local coarse space on $\Omega_l$ can be viewed as a good approximation of the discrete Helmholtz-harmonic space $V_h^\partial(\Omega_l)$ (see Lemma \ref{lemma:eigenEstimate}). If the local coarse space could be built directly, we can expect that it will preform well when it is a good approximation of $V_h^\partial(\Omega_l)$.

A discrete Helmholtz-harmonic function $v_h\in V_h^\partial(\Omega_l)$ is fully determined by a function $\lambda_h\in V_h(\Gamma_l)$:
%  its information on the interface $\Gamma_l = \partial\Omega_l\setminus\partial\Omega$.
%In fact, it is easy to obtain that there exists a unique $g_h\in V_h(\Gamma_l)$ such that
%\begin{equation*}
%    \langle g_h,w_h\rangle_{\Gamma_l} = a_{\epsilon,l}(v_h,w_h),\quad\forall w_h\in V_h(\Omega_l).
%\end{equation*}
\begin{equation*}
   a_{\epsilon,l}(v_h,w_h)=\langle \lambda_h,w_h\rangle_{\Gamma_l}, \quad\forall w_h\in V_h(\Omega_l).
\end{equation*}
Intuitively, $\lambda_h$ can be viewed as the ``discrete impedance data" of $v_h$ on $\Gamma_l$. Therefore, a good approximation of $\lambda_h\in V_h(\Gamma_l)$ will lead to a good approximation of $v_h$. In order to achieve this, the most natural idea is to consider the finite element space based on a ``coarse" grid on $\Gamma_l$. However, this idea is impractical since the shape of $\Gamma_l$ is not a straight line. Fortunately, in 2D there exists a continuous map from $[0,1]$ to $\Gamma_l$ and a linear map $\Phi_l:L^2([0,1])\rightarrow L^2(\Gamma_l)$ can be induced. Then, given $\nu>0$ and the corresponding mesh size $h_{\nu}=\frac{1}{\nu}$, the finite element space $V_{h_{\nu}}([0,1])$ could be built and $V_h^{\nu}(\Gamma_l)\coloneqq \pi_h \Phi_l (V_{h_{\nu}}([0,1]))$ is an approximation of $V_h(\Gamma_l)$. Finally, the economic local coarse is defined as
\begin{equation*}
    V_{h,0}^{\nu}(\Omega_l)\coloneqq \left\lbrace {\mathcal E}_{\epsilon,l}(\lambda_h):~ \lambda_h\in V_h^{\nu}(\Gamma_l)\right\rbrace,
\end{equation*}
where ${\mathcal E}_{\epsilon,l}(\lambda_h)$ for $\lambda_h\in V_h^{\nu}(\Gamma_l)$ is defined such that
\begin{equation*}
    a_{\epsilon,l}({\mathcal E}_{\epsilon,l}(\lambda_h), w_h) = \langle \lambda_h,\bar{w}_h\rangle_{\Gamma_l},\quad\forall w_h\in V_h(\Omega_l).
\end{equation*}
As in subsection \ref{subsec:coarseSpace}, the economic coarse space is defined by
\begin{equation*}
    V^{\nu}_{h,0}(\Omega):=\{v_h=\sum_{l=1}^N D_l E_l v_l:~v_l\in V_{h,0}^{\nu}(\Omega_l)\},
\end{equation*}
and the corresponding preconditioner could be built in the same way.

\section{Numerical experiments}
In this section, we display some numerical results to illustrate the efficiency of the proposed preconditioners.

As usual, the domain $\Omega$ we considered in the numerical experiments is the unit square $[0,1]^2$ in $\mathbb{R}^2$. Throughout this section, we make use of standard $p$-order($p=1$ or $2$ specifically) finite element approximation and the structured triangle grid of size $h \approx \kappa^{-\frac{2p+1}{2p}}$. Under these settings, the pollution error of the finite element solution of Helmholtz equation can be essentially reduced
according to the error estimates established in \cite{DuWu2015}. At first let the domain be decomposed into several non-overlapping square (or rectangle) subdomains. Then a few layers of fine mesh elements
are added to make up the overlapping subdomains. The following two strategies will be used in this section:
\begin{enumerate}
    \item {\it Minimal overlap.} Only one layer is added, and in this case $\delta = h$.
    \item {\it Generous overlap.} Multiple layers are added, ensuring that $\delta\approx \frac{d}{4}$.
\end{enumerate}
If not specifically mentioned, the latter strategy, i.e. generous overlap is utilized.

We consider the same example as in \cite{graham2020domain}: the data $f,g$ in \eqref{eq:shiftHelmholtz} is chosen such that $f = 0$ and $g$ is the impedance data of a plane wave $u(\textbf{x}) = \exp(\mathrm{i}\kappa \textbf{x}\cdot \widehat{d})$ where $\widehat{d} = (1/\sqrt{2},1/\sqrt{2})$.
%Unless otherwise stated, the data $f,g$ in \eqref{eq:shiftHelmholtz} is chosen such that $f = 0$ and $g$ is the impedance data of a plane wave $u(\textbf{x}) = \exp(\mathrm{i}\kappa \textbf{x}\cdot \widehat{d})$ where %$\widehat{d} = (1/\sqrt{2},1/\sqrt{2})$.
Standard GMRES with zero initial guess is used. The stopping criterion is that the relative $L^2$-norm of the residual of the iterative solution is less than $10^{-6}$ and the maximum number of iterations is set as 1000. The $\times$ in the tables means that GMRES doesn't converge when the maximum number of iterations is reached.

The implementation is mainly based on MFEM\cite{anderson2021mfem}, which is a free, lightweight, scalable C\texttt{++} library for finite element methods. Moreover, we use MUMPS\cite{amestoy2001fully}, which is
integrated in PETSc\cite{balay2020petsc}, both as the subdomain solver and the coarse solver. When it comes to the construction of the spectral coarse spaces, multiple generalized eigenvalue problems are solved, and SLEPc\cite{hernandez2005slepc} is used to do these computations. The numerical experiments in this section are done on the high performance computers of State Key Laboratory of Scientific and Engineering Computing, CAS.

\subsection{Efficiency of the proposed preconditioner}\label{subsec:efficiency}
In Sections \ref{sec:theoryBigep} and \ref{sec:theorySmallep}, different requirements on the tolerance $\rho$ ensuring convergence is proposed for the case with a relatively large $\epsilon$ or a small $\epsilon$ (see Theorem \ref{thm:theoryBigep} and \ref{thm:theorySmallep}). The first experiment is designed to verify the theories. In this experiment we focus on the meaningful case $\epsilon=\kappa$ ($\alpha=0$) with $\rho \sim \kappa^{\beta-2- \frac{\gamma}{2}}$ and $\epsilon = 0$ with $\rho\sim \kappa^{-1}$, the former is to verify Theorem \ref{thm:theoryBigep} while the latter is to verify Theorem \ref{thm:theorySmallep}. As mentioned earlier, we choose $h\approx\kappa^{-\frac{2p+1}{2p}}$ where $p=1,2$ is the order of nodal finite elements, implying that $\gamma = \frac{1}{2p}$ in Assumption \ref{assump:meshsize}.

We report the iteration counts of the GMRES method with the proposed preconditioner and the ratio of size of coarse space to maximal size of local problems. The numerical results for both linear finite element approximation
and quadratic finite element approximation are showed in Table \ref*{tab:theroyCheck}.

\begin{table}[H]\small
    \caption{\rm GMRES iteration counts and the ratio of size of coarse space to maximal size of local problems (in parentheses)}
    \label{tab:theroyCheck}%
     \begin{center}
        \begin{tabular}{|c||c|c|c||c|c|c|}
            \hline
            \multicolumn{7}{|c|}{linear FE approximation} \\
            \hline
            & \multicolumn{3}{c||}{$\epsilon=\kappa$} & \multicolumn{3}{c|}{$\epsilon=0$}\\
            \hline
            $\kappa\backslash\beta$  & 0.7 & 0.6 & 0.5 & 1.0 & 0.8 & 0.6\\
            \hline
            $30\pi$  & 5(1.4) & 4(0.5) & 4(0.12) & 7(208.7)& 9(5.0) & 13(0.23)\\
            \hline
            $40\pi$  & 4(2.8) & 3(0.6) & 3(0.14) & 6(233.2)& 6(8.0) & 10(0.24)\\
            \hline
            $50\pi$  & 3(3.5) & 3(0.9)  & 2(0.21) & 6(493.9)& 6(8.2) & 7(0.28)\\
            \hline
            $60\pi$    & 3(4.5) & 2(1.0) & 2(0.23) & 4(875.4)& 5(11.8) & 6(0.29)\\
            \hline
            \hline
            \multicolumn{7}{|c|}{quadratic FE approximation} \\
            \hline
            & \multicolumn{3}{c||}{$\epsilon=\kappa$} & \multicolumn{3}{c|}{$\epsilon=0$}\\
            \hline
            $\kappa\backslash\beta$  & 0.7 & 0.6 & 0.5 & 1.0 & 0.8 & 0.6\\
            \hline
            $40\pi$  & 6(3.7) & 4(1.1) & 3(0.3) & 6(507.8)& 6(22.4) & 10(0.7)\\
            \hline
            $60\pi$  & 3(11.9) & 2(2.1) & 3(0.6) & 6(1965.9)& 6(32.0) & 6(0.9)\\
            \hline
            $80\pi$  & 3(18.7) & 2(3.7)  & 2(0.7) & 4(5064.8)& 4(73.4) & 5(1.4)\\
            \hline
            $100\pi$  & 2(25.9) & 2(4.4) & 2(1.0) & 4(7714.3)& 3(113.6) & 4(1.5)\\
            \hline
        \end{tabular}%
    \end{center}
\end{table}

 The data in Table \ref{tab:theroyCheck} indicate that the iteration counts are almost independent of the wave numbers, which verify Theorems \ref{thm:theoryBigep} and \ref{thm:theorySmallep}
 under different settings. In fact, for the theoretical choice of $\rho$ (and fixed $\beta$), the iteration counts decrease slightly but the size of the coarse space increases rapidly
 when increasing $\kappa$.
 This phenomenon means that the requirements on $\rho$ made in Theorem \ref{thm:theoryBigep} and Theorem \ref*{thm:theorySmallep} are too strict, which leads to very small $\rho$ and quite large coarse spaces.
 %Besides, from the last column of Table \ref*{tab:theroyCheck}, it seems that the assumption $\kappa d\lesssim 1$ made in Theorem \ref*{thm:theorySmallep} can be relaxed.

 It can also be seen from Table \ref*{tab:theroyCheck} that the preconditioner show similar efficiency for both finite element approximations. From now on we will utilize the quadratic finite element approximation, which exhibits good performance when solving high-frequency Helmholtz problems.

According to the above discussions, the tolerance $\rho$ should be selected carefully in practice, taking into account the stability of iteration counts and the size of the corresponding coarse space.
Intuitively, as $\kappa$ and $d$ increase, every local problem becomes more indefinite, so the value of $\rho$ needs to be decreased slightly such that each local coarse space contains more basis functions.
Guided by this idea and some observations, we find that $\rho = \frac{1}{2}\kappa^{\frac{\beta-1}{2}}$ may be a good choice. We design another experiment with such choice of $\rho$ and list the data in
Table \ref*{tab:goodRho}.

\begin{table}[H]\small
    \caption{\rm GMRES iteration counts and the ratio of size of coarse space to maximal size of local problems (in parentheses) with $\rho = \frac{1}{2} \kappa^{\frac{\beta-1}{2}}$}
    \label{tab:goodRho}
       \begin{center}
        \begin{tabular}{|c||c|c|c||c|c|c|}
            \hline
            & \multicolumn{3}{c||}{$\epsilon=\kappa$} & \multicolumn{3}{c|}{$\epsilon=0$}\\
            \hline
            $\kappa\backslash\beta$ & 0.7 & 0.6 & 0.5 & 0.7 & 0.6 & 0.5\\
            \hline
            $40\pi$  & 7(3.3)  & 6(0.8)  & 6(0.15) & 7(3.3)  & 6(0.8)  & 6(0.15) \\
            \hline
            $80\pi$  & 7(6.5)  & 7(1.1)  & 6(0.18) & 7(6.5)  & 7(1.1)  & 7(0.18) \\
            \hline
            $120\pi$ & 8(6.4)  & 7(1.2)  & 7(0.17) & 9(6.4)  & 8(1.2)  & 8(0.17) \\
            \hline
            $160\pi$ & 10(8.1)  & 8(1.3)  & 6(0.17) & 12(8.1)  & 9(1.3)  & 7(0.17)\\
            \hline
        \end{tabular}
    \end{center}

\end{table}

It can be seen from Table \ref*{tab:goodRho} that, when increasing  $\kappa$, there is at most a slight increase in the iteration counts for a fixed $\beta$.
Comparing the results for the two choices of $\epsilon$, we find that the iteration counts barely change and the size of coarse spaces are almost the same.
This phenomenon can be explained by the fact that the Helmholtz problem (1.1) with $\epsilon\leq 1$ is close to the pure Helmholtz problem in the spectrum sense.
Meanwhile, as $\beta$ decreases, the ratio of size of the coarse space to the maximal size of local problems also decreases.
The results in Table \ref*{tab:goodRho} show that such choice of $\rho$ can balance the robustness and the size of coarse space.
We can also observe that the size of coarse space is comparable with that of local problems when $\beta=0.6$, which is an ideal choice of $\beta$ (for the quadratic FE approximations).

In Subsection \ref*{subsec:ecoCS}, we introduced a strategy to generate a coarse space without solving eigenproblems, which is named the economic coarse space. The final experiment in this subsection is devoted to show the efficiency of this economic coarse space. Recall that this coarse space is fully determined by the ``coarse'' mesh size $h_{\nu}$ and ``coarse'' finite element space $V_{\nu}([0,1])$ (see Subsection \ref*{subsec:ecoCS}). We choose $V_{\nu}([0,1])$ as the 1-d quadratic finite element space. Inspired by the choice of $\rho$ in the previous discussion, we choose $h_{\nu}=\kappa^{\beta-1}$. In Table \ref*{tab:eco}, we list the results for this case.

\begin{table}[H]\small
    \caption{\rm GMRES iteration counts and the ratio of size of coarse space to maximal size of local problems (in parentheses) for the economic coarse space with $h_{\nu}=\kappa^{\beta-1}$}
    \label{tab:eco}
 \begin{center}
        \begin{tabular}{|c||c|c|c||c|c|c|}
            \hline
            & \multicolumn{3}{c||}{$\epsilon=\kappa$} & \multicolumn{3}{c|}{$\epsilon=0$}\\
            \hline
            $\kappa\backslash\beta$ & 0.7 & 0.6 & 0.5 & 0.7 & 0.6 & 0.5\\
            \hline
            $40\pi$  & 4(4.3)  & 5(0.9)  & 4(0.21) & 5(4.3)  & 5(0.9)  & 5(0.21)  \\
            \hline
            $80\pi$  & 5(7.2)  & 4(1.4)  & 5(0.23) & 5(7.2)  & 5(1.4)  & 5(0.23) \\
            \hline
            $120\pi$ & 8(7.2)  & 5(1.4)  & 4(0.21) & 9(7.2)  & 5(1.4)  & 5(0.21) \\
            \hline
            $160\pi$ & 7(9.8)  & 6(1.5)  & 5(0.21) & 7(9.8)  & 6(1.5)  & 5(0.21) \\
            \hline
        \end{tabular}
    \end{center}
\end{table}

Table \ref*{tab:eco} indicates that the economic coarse space exhibits good efficiency with such choice of $h_{\nu}$, especially when $\beta$ is relatively small. We furthermore find from Table \ref*{tab:eco}
and Table \ref*{tab:goodRho} that the economic coarse space shows similar performance with the original coarse space. Again, considering the robustness of the iteration and the size of coarse space, we believe that
$\beta=0.6$ is a good choice, which leads to a relatively large size $d$ of subdomains.

\subsection{Comparisons with several related preconditioners}
In this subsection, we will compare the newly proposed economic preconditioner with existing preconditioners introduced in Subsection \ref*{subsec:otherCS}. We will focus on the case of $\epsilon=0,1$ and consider the representative case with $\beta=0.6,0.8$.

The first experiment is designed to compare our preconditioner with ORAS preconditioner and the two level preconditioner equipped with grid coarse space.
For convenience, we use ``ECS" to denote the economic coarse space and ``GCS'' to denote the grid coarse space. For ECS, we still set parameter $h_{\nu}=\kappa^{\beta-1}$.
Besides, for GCS we display the results for an additional case $\beta = 1$ when GCS performs well. For the rationality of comparison,
the solving time of linear systems is also counted. The numerical results are showed in Table \ref*{tab:comp1}.

\begin{table}[H]\small
    \caption{\rm GMRES iteration counts, the ratio of size of coarse space to maximal size of local problems (in parentheses, for two level methods) and solving time in seconds (in square brackets) for three preconditioners} for the case with $\epsilon=1$
    \label{tab:comp1}
    \begin{center}
        \begin{tabular}{|c||c|c||c|c||c|c|c|}
            \hline
            & \multicolumn{2}{c||}{ECS} & \multicolumn{2}{c||}{ORAS} & \multicolumn{3}{c|}{GCS} \\
            \hline
            $\kappa\backslash\beta$ & 0.8 & 0.6 & 0.8 & 0.6 &1.0 & 0.8 & 0.6\\
            \hline
            $40\pi$ & \makecell{{ 8(19.8)}\\ $[0.36]$} & \makecell{{ 5(0.9)}\\$[0.25]$} & \makecell{{ 280}\\$[1.70]$} & \makecell{{ 98}\\$[1.66]$} & \makecell{{ 9(405.2)}\\$[1.70]$} & \makecell{{ 32(13.3)}\\$[1.01]$} & \makecell{{ 238(0.26)}\\$[6.58]$} \\
            \hline
            $80\pi$ & \makecell{{ 5(42.8)}\\$[1.29]$}    & \makecell{{ 5(1.4)}\\$[0.96]$}   & \makecell{{ 477}\\ $[27.44]$}  & \makecell{{ 171}\\$[7.15]$} & \makecell{{ 10(1614.5)} \\ $[4.17]$} & \makecell{{ 161(21.5)} \\ $[18.30]$} & \makecell{{ 411(0.28)} \\ $[32.24]$}  \\
            \hline
            $120\pi$ & \makecell{{ 8(49.7)}\\$[3.50]$}  & \makecell{{ 5(1.4)}\\$[1.89]$} & \makecell{{ 653}\\ $[114.93]$}  & \makecell{{ 191}\\$[33.84]$} & \makecell{{ 11(2700.3)} \\ $[11.52]$} & \makecell{{ 722(25.0)} \\ $[227.46]$} & \makecell{{ 552(0.27)} \\ $[131.32]$} \\
            \hline
            $160\pi$ & \makecell{{ 11(66.7)}\\$[6.68]$}  & \makecell{{ 6(1.5)}\\$[3.95]$} & \makecell{{ 848}\\ $[269.68]$}  & \makecell{{ 232}\\$[83.12]$}  & \makecell{{ 12(4797.4)} \\ $[12.74]$} & $\times$ & \makecell{{ 234(0.25)} \\ $[95.06]$} \\
            \hline
        \end{tabular}
    \end{center}
\end{table}

 From Table \ref*{tab:comp1}, we can see that ORAS and GCS perform worse as expected for the cases of $\beta = 0.6, 0.8$, and GCS performs well when $\beta=1$ but the size of the corresponding coarse space is quite huge and almost unacceptable. Thanks to fast convergence of the proposed preconditioner, the solving time for ECS is obviously less than that of ORAS and GCS.

At the end of this subsection, we compare the proposed economic coarse space with DtN coarse space and HGenEO coarse space. In order to make a fair comparison, we use a fixed number $m$ of local basis functions per subdomain.
For spectral coarse space, this is easy to achieve by fixing $m$ ``bad" eigenvectors (related to the eigenvalues having smaller real parts) per subdomain while for the economic coarse space, the parameter $\nu$ (see Subsection \ref*{subsec:ecoCS}) is set to ${m\over 2}$ since the quadratic approximation is used.
%should be selected carefully to fulfill this requirement.
Table \ref{tab:pureFixm} shows the comparison of the preconditioners for generous overlap and minimal overlap cases. For case of minimal overlap, the parameter $m$ is set slightly bigger to
reduce the impact of decreasing the overlap.

\begin{table}[H]\small
    \caption{\rm GMRES iteration counts for three preconditioners with fixed number of eigenvectors per subdomain for the case with $\epsilon=0$}
    \label{tab:pureFixm}
    \begin{tabular}{|c||c|c|c|c|c|c|c|c|}
        \hline
        \multicolumn{9}{|c|}{$\beta=0.8$}\\
        \hline
        \multirow{2}{*}{$\kappa$} & \multicolumn{4}{c|}{Minimal overlap} & \multicolumn{4}{c|}{Generous overlap}\\
        \cline{2-9}
        & $m$ &  ECS  & DtN & HGenEO & $m$ & ECS  & DtN   & HGenEO \\
        \hline
        $40\pi$ & 8 & 6 & 7 & 30 & 6 & 8 & 9 & 162 \\
        \hline
        $80\pi$ & 10 & 6 & 8 & 33 & 8& 5 & 639 & 312 \\
        \hline
        $120\pi$ & 10 & 8 & 12  & 61 & 8& 8 & 841 & 859 \\
        \hline
        $160\pi$ & 10 & 10 & 18  & 93 & 8& 11 & $\times$ & $\times$ \\
        \hline
        \hline
        \multicolumn{9}{|c|}{$\beta=0.6$}\\
        \hline
        \multirow{2}{*}{$\kappa$} & \multicolumn{4}{c|}{Minimal overlap} & \multicolumn{4}{c|}{Generous overlap}\\
        \cline{2-9}
        & $m$ &  ECS  & DtN & HGenEO & $m$ & ECS  & DtN   & HGenEO \\
        \hline
        $40\pi$ & 16 & 9 & 39 & 17 & 14 & 5 & 96 & 111 \\
        \hline
        $80\pi$ & 22 & 12 & 37 & 19 & 20 & 5 & 158 & 274 \\
        \hline
        $120\pi$ & 24 & 8 & 20 & 57 & 22 & 5 & 67 & 971  \\
        \hline
        $160\pi$ & 26 & 9 & 71 & 417 & 24 & 6 & 174 & $\times$  \\
        \hline
    \end{tabular}
\end{table}

From Table \ref*{tab:pureFixm}, ECS performs better than the other preconditioners, especially for the case of generous overlap or $\beta=0.6$. The case with minimal overlap is preferred in practice since less additional
cost is required for this case. The newly proposed preconditioner is also suitable for this case.

\bibliographystyle{siamplain}

\end{document}